\tikzset{filled/.style={minimum width=5pt,inner sep=0pt,circle,fill=black}}
\newtheorem{theorem}{Theorem}[section]
\newtheorem{lemma}[theorem]{Lemma}
\newtheorem{question}[theorem]{Question}
\theoremstyle{definition}
\newtheorem{definition}[theorem]{Definition}
\theoremstyle{remark}
\newtheorem{remark}[theorem]{Remark}
\numberwithin{equation}{section}
\numberwithin{figure}{section}
\renewcommand{\mod}{\operatorname{mod}}
\newcommand{\N}{\mathbb{N}}
\newcommand{\Z}{\mathbb{Z}}
\title[Chromatic Numbers]{Chromatic Numbers with Open and Nonzero Local Modular Constraints}
\author[Herden, Meddaugh, Sepanski, $\ldots$]{Daniel Herden, Jonathan Meddaugh, Mark R. Sepanski, William Clark, Adam Kraus, Ellie Matter, Kingsley Michael, Mitchell Minyard, Maricela Ramirez, Kyle Rosengartner, Elyssa Stephens, John Stephens}
\thanks{The first author was supported by Simons Foundation grant MPS-TSM-00007788.
	The second author was supported by a grant from the Simons Foundation (960812, JM)}
\address{
All authors:
Department of Mathematics,
Baylor University,
Sid Richardson Building,
1410 S.~4th Street,
Waco, TX 76706, USA}
\email{daniel\_herden@baylor.edu,  jonathan\_meddaugh@baylor.edu, mark\_sepanski@baylor.edu, william\_clark2@baylor.edu, adam\_kraus1@baylor.edu, ellie\_carr3@baylor.edu, kyle\_rosengartner1@baylor.edu, ellie\_cirillo1@baylor.edu, john\_stephens2@baylor.edu, mitch\_minyard1@baylor.edu, kingsley\_michael1@baylor.edu, maricela\_ramirez1@baylor.edu}
\date{\today}
\begin{document}

\keywords{odd-sum colorings, odd-sum chromatic number}
\subjclass[2020]{Primary: 05C78; Secondary: 05C25}

\begin{abstract}
    In this paper, we explore chromatic numbers subject to various local modular constraints. For fixed $n$, we consider proper integer colorings of a graph $G$ for which the closed and open neighborhood sums have nonzero remainders modulo $n$ and provide bounds for the associated chromatic numbers $\chi_n(G)$ and $\chi_{(n)}(G)$, respectively.  In addition, we provide bounds for $\chi_{(n,k)}(G)$, the minimal order of a proper integer coloring of $G$ with open neighborhood sums congruent to $k\mod n$ (when such a coloring exists) as well as precise values for certain families of graphs.
\end{abstract}

\maketitle

\tableofcontents

\section{Introduction}

Graph coloring is one of the most well-studied areas in graph theory. Perhaps the most well-known graph coloring problem is the problem of finding proper colorings of the vertices of a graph $G$. The minimum number of colors in such a coloring is the well-studied chromatic number of $G$, $\chi(G)$, which is, despite its long history, still an object of active research \cite{cambie2024removing, campena2023graph, cervantes2023chromatic, char2023improved, haxell2023large, heckel2021non, isaev2021chromatic, matsumoto2021chromatic}. Many other graph invariants can be defined by considering the minimum number of colors under different constraints.
As a variation on the well-studied topic of odd colorings \cite{RemarksOdd, CranstonSparse, Cranston1Planar, KnauerBoundedness, PetrChiPlanar, NeighborhoodParity}, this includes the \emph{odd-sum chromatic number} of a graph $G=(V,E)$, denoted $\chi_{\rm{os}}(G)$, which gives the minimum size of the range of a proper $\Z$-labeling $\ell:V\longrightarrow \Z$ such that at every vertex, the sum of its label along with the labels of the adjacent vertices is odd. This concept has been studied in \cite{CaroYairPetru2023, CranstonOddSum}, which includes general bounds given by Caro, Petru\v{s}evski, and \v{S}krekovski \cite{CaroYairPetru2023} as well as various bounds for certain classes of graphs. This idea has been further generalized in \cite{ClosedNeighborhoodSums} by considering proper colorings where all these neighborhood sums have remainder $k \mod n$ for some fixed integers $n$ and $k$, with associated chromatic number $\chi_{n,k}(G)$. Thus, we have $\chi_{2,1}(G)=\chi_{\rm{os}}(G)$.


In Sections 3 and 4 of this paper, we consider an alternative generalization of this notion. In particular, we note that having remainder $1 \mod 2$ is the same as \emph{not} having remainder $0\mod 2$. Of course, for $n>2$, these ideas are not equivalent and we develop the theory of proper colorings where no neighborhood sum has remainder $0 \mod n$. We provide bounds and specific values for some families of graphs for the associated chromatic number $\chi_n(G)$.

In the remainder of the paper, we consider \emph{open neighborhood sums}, i.e., the sums over vertices adjacent, but \emph{not} equal to, a fixed vertex. In Sections 5 and 6, we develop results for $\chi_{(n,k)}(G)$, the minimum size of the range of a proper $\Z$-labeling of a graph $G$ such that all open neighborhood sums have remainder $k\mod n$; and in Sections 7 and~8, we consider $\chi_{(n)}(G)$, the minimum range size of a proper $\Z$-labeling of a graph $G$ with no open neighborhood sum congruent to $0$ modulo $n$.

\section{Definitions}

We write $\N$ for the nonnegative integers and $\Z^+$ for the positive ones.
For $a,b \in \Z$, not both zero, we write $(a,b)$ for the greatest common divisor of $a$ and $b$.
For $k\in\Z$ and $n\in\Z^+$, we write $[k]$ for the image of $k$ in $\Z_n$.

We write $G=(V,E)$ for a simple graph with vertex set $V$ and edge set $E$ and \[\ell:V\longrightarrow \Z\] for a \emph{coloring} or \emph{labeling} of the vertices by $\Z$, also called a \emph{$\Z$-labeling}.
The \emph{order} of a labeling, $|\ell|$, is the size of its range.

If $v\in V$,
the \emph{open neighborhood} of $v$, $N(v)$, consists of all vertices adjacent to $v$, and
the \emph{closed neighborhood} of $v$, $N[v]=N(v) \cup \{v\}$, consists of $v$ and all vertices adjacent to $v$.
A labeling is called \emph{proper} if $\ell(v)\neq\ell(w)$ for each $v\in V$ and each $w\in N(v)$. The \emph{chromatic number} of $G$, $\chi(G)$, is the minimum order of a proper labeling of $G$.

Let $k\in\Z$ and $n\in\Z^+$.
In \cite{ClosedNeighborhoodSums}, we have investigated the following notion:
    A \emph{closed coloring with remainder $k\mod n$} of $G$ is a $\Z$-labeling $\ell$ of $G$ so that, for each $v\in V$,
    \[ \sum_{w\in N[v]} \ell(w) \equiv k \mod n. \]
    If no proper closed coloring with remainder $k\mod n$ of $G$ exists, we say that $\chi_{n,k}(G)$ does not exist.
    Otherwise, if proper closed colorings with remainder $k\mod n$ of $G$ exist of finite order, the \emph{closed chromatic number of $G$ with remainder $k \mod n$}, written \[\chi_{n,k}(G),\] is the minimum order of a proper closed coloring with remainder $k\mod n$ of $G$. If such colorings exist only of infinite order, we write ${\chi_{n,k}(G)=\infty}$.

    This notion arose as a generalization of the \emph{odd-sum chromatic number} of $G$, $\chi_{\rm{os}}(G)$, introduced in \cite{CaroYairPetru2023}. With the above notation, $\chi_{2,1}(G) = \chi_{\rm{os}}(G)$.
    However, there is another natural generalization of $\chi_{\rm{os}}(G)$ based on the observation that $[1]$ is the only nonzero element in $\Z_2$.

\begin{definition}\label{def: closed coloring with nonzero remainder}
    Let $k\in\Z$ and $n\in\Z^+$.
    A \emph{closed coloring with nonzero remainders $\mod n$} of $G$ is a $\Z$-labeling $\ell$ of $G$ so that, for each $v\in V$,
    \[ \sum_{w\in N[v]} \ell(w) \not\equiv 0 \mod n. \]
    If no proper closed coloring with nonzero remainders $\mod n$ of $G$ exists, we say that $\chi_{n}(G)$ does not exist.
    Otherwise, if proper closed colorings with nonzero remainders $\mod n$ of $G$ exist of finite order, the \emph{closed chromatic number of $G$ with nonzero remainders $ \mod n$}, written \[\chi_{n}(G),\] is the minimum order of a proper closed coloring with nonzero remainders $\mod n$ of $G$. If such colorings exist only of infinite order, we write $\chi_{n}(G)=\infty$.
\end{definition}

With Definition \ref{def: closed coloring with nonzero remainder} in hand, we see that
\[ \chi_{\rm{os}}(G) = \chi_{2,1}(G) = \chi_2(G).\]

Note that in all of the above definitions, closed neighborhoods were studied. However, there are analogous definitions with open neighborhoods.
\begin{definition}\label{def: open colorings}
    Let $k\in\Z$ and $n\in\Z^+$.
    \begin{itemize}
    \item An \emph{open coloring with remainder $k\mod n$} of $G$ is a $\Z$-labeling $\ell$ of $G$ so that, for each $v\in V$,
    \[ \sum_{w\in N(v)} \ell(w) \equiv k \mod n. \]
    If no proper open coloring with remainder $k\mod n$ of $G$ exists, we say that $\chi_{(n,k)}(G)$ does not exist.
    Otherwise, if proper open colorings with remainder $k\mod n$ of $G$ exist of finite order, the \emph{open chromatic number of $G$ with remainder $k \mod n$}, written \[\chi_{(n,k)}(G),\] is the minimum order of a proper open coloring with remainder $k\mod n$ of $G$. If such colorings exist only of infinite order, we write $\chi_{(n,k)}(G)=\infty$.

    \item An \emph{open coloring with nonzero remainders $\mod n$} of $G$ is a $\Z$-labeling $\ell$ of $G$ so that, for each $v\in V$,
    \[ \sum_{w\in N(v)} \ell(w) \not\equiv 0 \mod n. \]
    If no proper open coloring with nonzero remainders $\mod n$ of $G$ exists, we say that $\chi_{(n)}(G)$ does not exist.
    Otherwise, if proper open colorings with nonzero remainders $\mod n$ of $G$ exist of finite order, the \emph{open chromatic number of $G$ with nonzero remainders $ \mod n$}, written \[\chi_{(n)}(G),\] is the minimum order of a proper open coloring with nonzero remainders $\mod n$ of $G$. If such colorings exist only of infinite order, we write $\chi_{(n)}(G)=\infty$.
    \end{itemize}
\end{definition}

\section{Basic Results for $\chi_{n}(G)$} \label{sec:3}

\begin{theorem}\label{thm: nonzero mod n inequalities}
    Let $m,n\in\Z^+$ with $m\mid n$. If $\chi_m(G)$ exists, then
    \[ \chi(G) \leq \chi_n(G) \leq \chi_m(G). \]
\end{theorem}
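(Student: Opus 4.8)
The statement has two inequalities to establish, and the hypothesis "$\chi_m(G)$ exists" with $m \mid n$ should be leveraged in both directions. For the lower bound $\chi(G) \le \chi_n(G)$, I would argue that any proper closed coloring with nonzero remainders $\bmod\ n$ is, in particular, a proper coloring of $G$ (the "nonzero remainder" condition is an extra constraint, not a relaxation of properness). Hence the minimum order over the more restricted class of colorings can only be larger, giving $\chi(G) \le \chi_n(G)$ immediately from the definitions. One subtlety: I should make sure $\chi_n(G)$ is finite (or at worst $\infty$) so the inequality is meaningful — but since we are told $\chi_m(G)$ exists, the second inequality will hand us finiteness of $\chi_n(G)$ for free once that is proven, so the order of the argument matters.

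**The main step.** The substantive inequality is $\chi_n(G) \le \chi_m(G)$. Here the idea is that a proper closed coloring $\ell$ with nonzero remainders $\bmod\ m$ can be reused verbatim as a proper closed coloring with nonzero remainders $\bmod\ n$. The key arithmetic observation is: since $m \mid n$, the natural reduction map $\Z_n \to \Z_m$ sends $0$ to $0$, so if an integer $s$ satisfies $s \not\equiv 0 \bmod m$, then a fortiori $s \not\equiv 0 \bmod n$ — because $n \mid s$ would force $m \mid s$. Wait — I need to be careful about the direction: $m \mid n$ and $n \mid s$ together give $m \mid s$, so contrapositively $m \nmid s \Rightarrow n \nmid s$. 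Thus $\sum_{w \in N[v]} \ell(w) \not\equiv 0 \bmod m$ for all $v$ implies $\sum_{w \in N[v]} \ell(w) \not\equiv 0 \bmod n$ for all $v$. The labeling $\ell$ is unchanged, so it is still proper and has the same order. Taking $\ell$ to realize $\chi_m(G)$ yields a proper closed coloring with nonzero remainders $\bmod\ n$ of order $\chi_m(G)$, hence $\chi_n(G)$ exists and $\chi_n(G) \le \chi_m(G)$.

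**Assembling and obstacles.** With the second inequality in hand, $\chi_n(G)$ is known to exist and be finite, so the first inequality $\chi(G) \le \chi_n(G)$ is then just the observation above that every proper closed coloring with nonzero remainders is a proper coloring. I do not anticipate a serious obstacle here; the only thing to watch is the bookkeeping about existence versus the value being $\infty$ — I would phrase it so that we first exhibit a finite-order coloring witnessing $\chi_n(G) \le \chi_m(G) < \infty$, and only then invoke properness for the lower bound. If the paper's conventions allow $\chi_m(G) = \infty$ under "exists," I would note that the divisibility argument still shows any closed coloring with nonzero remainders $\bmod\ m$ (of whatever order) is one $\bmod\ n$, so the inequality $\chi_n(G) \le \chi_m(G)$ persists in the extended-reals sense, and the lower bound argument is unaffected.
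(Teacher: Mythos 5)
Your proposal is correct and follows essentially the same route as the paper: the lower bound is immediate from the definition, and the upper bound comes from observing that, since $m\mid n$, any closed coloring with nonzero remainders $\operatorname{mod} m$ is automatically one $\operatorname{mod} n$. Your extra care with the divisibility direction and the existence/$\infty$ bookkeeping is sound but does not change the argument.
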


\begin{proof}
    The first inequality follows immediately from the definition. For the second, observe that if $\ell$ is a closed coloring with nonzero remainders $\mod m$, then it is also one for $\mod n$.
\end{proof}

\begin{question}
For finite graphs $G$, it is known, \cite[Proposition 3.1]{CaroYairPetru2023}, that $\chi_{\rm{os}}(G)=\chi_2(G)$ always exists.
Therefore, Theorem \ref{thm: nonzero mod n inequalities} shows that $\chi_n(G)$ always exists for all finite graphs $G$ and even $n$. Is this also true for all odd $n$?
\end{question}

\begin{theorem}\label{thm: nonzero remainder mod n existence criteria and upper bound by n chi}
    Let $n\in\Z^+$, and let $\chi(G)$ be finite. Then a proper closed coloring with nonzero remainders $\mod n$ of $G$ exists if and only if a closed coloring with nonzero remainders $\mod n$ of $G$ exists. In that case,
    \[ \chi(G)\le \chi_{n}(G) \leq n \, \chi(G). \]
    More precisely, if $\ell$ is a closed labeling with nonzero remainders $\mod n$, then
    \[ \chi(G) \le \chi_{n}(G) \leq |\ell| \, \chi(G).\]
\end{theorem}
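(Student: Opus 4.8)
The plan is to combine a proper coloring of $G$ with a "correction layer" built out of a closed labeling with nonzero remainders $\bmod\ n$. Suppose $\ell$ is a closed labeling of $G$ (not necessarily proper) with all closed neighborhood sums nonzero $\bmod\ n$; such a labeling exists by hypothesis, and the existence of a proper one is exactly what we want to produce. Also fix a proper coloring $c\colon V\to\{0,1,\dots,\chi(G)-1\}$, which exists since $\chi(G)$ is finite. The idea is to define a new labeling $\ell'$ by $\ell'(v) = n\cdot c(v) + \ell(v)$, or a mild variant thereof. The factor $n$ ensures that the closed neighborhood sum of $\ell'$ at each vertex $v$ equals $\big(\sum_{w\in N[v]} \ell(w)\big) + n\big(\sum_{w\in N[v]} c(w)\big) \equiv \sum_{w\in N[v]}\ell(w) \not\equiv 0 \bmod n$, so the nonzero-remainder condition is inherited from $\ell$.

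The first key step is therefore to verify that $\ell'$ is proper. If $v$ and $w$ are adjacent, then $c(v)\neq c(w)$, and I would argue $\ell'(v)\neq\ell'(w)$: since $c(v)$ and $c(w)$ differ, $n c(v)$ and $n c(w)$ differ by at least $n$ in absolute value, but $\ell(v)$ and $\ell(w)$ might differ by more than $n$, so a naive argument does not immediately close. The clean fix is to first arrange that $\ell$ takes values in $\{0,1,\dots,n-1\}$ by replacing each $\ell(v)$ with its residue $\bmod\ n$; this does not change any closed neighborhood sum modulo $n$, hence preserves the nonzero-remainder property, and it has order at most $|\ell|$ (in fact at most $\min(|\ell|, n)$). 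With $0\le \ell(v)\le n-1$ for all $v$, the map $v\mapsto n c(v) + \ell(v)$ is a bijection onto a subset of $\{0,1,\dots\}$ when restricted to the relevant values, and in particular $n c(v)+\ell(v) = n c(w)+\ell(w)$ forces $c(v)=c(w)$ and $\ell(v)=\ell(w)$ by uniqueness of base-$n$ digits; adjacency gives $c(v)\ne c(w)$, a contradiction. So $\ell'$ is proper.

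The second key step is to bound $|\ell'|$. The range of $\ell'$ is contained in $\{n i + j : i \in \operatorname{range}(c),\ j\in\operatorname{range}(\ell)\}$, which has size at most $|c|\cdot|\ell| \le \chi(G)\,|\ell|$; this gives the sharper bound $\chi_n(G)\le |\ell|\,\chi(G)$. Combining with the general bound $|\ell|\le \chi_2(G)\le 2\chi(G)$ is not needed; instead, since $\chi_{\mathrm{os}}(G)=\chi_2(G)$ always exists for finite graphs and, more to the point, since we may take $\ell$ with $|\ell|\le n$ (the residue reduction above), we get $\chi_n(G)\le n\,\chi(G)$. The lower bound $\chi(G)\le\chi_n(G)$ is immediate because any proper closed coloring with nonzero remainders is in particular a proper coloring. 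Finally, the "if and only if" in the statement follows: one direction is trivial (a proper such coloring is a such coloring), and the other direction is precisely the construction above, which upgrades an arbitrary closed labeling with nonzero remainders to a proper one.

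I expect the main obstacle to be the properness verification, specifically making sure the correction layer $n\cdot c$ genuinely separates adjacent vertices; the resolution is the preliminary reduction of $\ell$ to residues in $\{0,\dots,n-1\}$, after which a base-$n$ digit argument makes properness transparent and simultaneously yields both the refined bound $|\ell|\chi(G)$ and the coarse bound $n\chi(G)$.
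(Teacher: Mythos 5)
Your proposal is correct and follows essentially the same route as the paper: reduce $\ell$ to residues in $\{0,\dots,n-1\}$, add it to a minimal proper coloring scaled into $n\Z$, and read off properness and the nonzero-remainder condition from the two "digits." The paper's proof is just a terser version of your base-$n$ argument, with the same bounds $|\ell|\,\chi(G)$ and $n\,\chi(G)$.
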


\begin{proof}
    Let $\ell$ be a closed coloring with nonzero remainders $\mod n$ of $G$ and let $\ell'$ be a minimal proper labeling of $G$. We may assume that the range of $\ell$ sits in $[0,n-1]$, and we may assume that the range of $\ell'$ sits in $n\Z$. Then the labeling $\ell + \ell'$ is a proper closed coloring with nonzero remainders $\mod n$ of $G$. As its order is bounded by $|\ell| \chi(G)$ and since $|\ell|\leq n$, we are done.
\end{proof}

\begin{theorem}\label{thm: regular chi n}
    Let $n,j\in\Z^+$, and let $G$ be a $j$-regular graph. Then
    \[ n \nmid (j+1) \implies \chi_{n}(G) = \chi(G). \]
\end{theorem}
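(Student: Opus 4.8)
The lower bound $\chi(G)\le\chi_n(G)$ is immediate, since a proper closed coloring with nonzero remainders $\mod n$ is in particular a proper labeling (this is also the first inequality of Theorem~\ref{thm: nonzero mod n inequalities}). So the real content is to exhibit a proper closed coloring with nonzero remainders $\mod n$ that uses only $\chi(G)$ colors.

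The structural fact I would exploit is that in a $j$-regular graph every closed neighborhood $N[v]$ has exactly $j+1$ elements, so adding a fixed constant $c$ to every label changes each closed neighborhood sum by exactly $(j+1)c$. The plan is therefore: take a minimal proper labeling $\ell'$ of $G$, rescale it by replacing $\ell'$ with $n\ell'$ so that its range lies in $n\Z$, and then set $\ell = n\ell' + 1$. Adding a constant is a bijection on $\Z$, so $\ell$ is again proper and has order $|\ell'| = \chi(G)$.

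It then remains only to verify the modular condition: for each $v$ we have $\sum_{w\in N[v]}\ell(w) = n\sum_{w\in N[v]}\ell'(w) + (j+1) \equiv j+1 \pmod n$, which is nonzero modulo $n$ precisely because $n\nmid(j+1)$. Hence $\chi_n(G)\le\chi(G)$, and together with the lower bound this gives $\chi_n(G)=\chi(G)$.

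I do not expect a genuine obstacle here; the point is simply to notice that the hypothesis $n\nmid(j+1)$ is exactly what forces a uniform constant shift to push every closed neighborhood sum into a nonzero residue class, and that $j$-regularity is what lets a single shift work simultaneously at all vertices. The degenerate case $j=0$ (where $n\nmid 1$ forces $n>1$, and the constant labeling $\ell\equiv 1$ already works) is covered by the same computation.
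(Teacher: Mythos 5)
Your proof is correct and is essentially the paper's argument: the paper observes that the constant labeling by $1$ is a (non-proper) closed coloring with nonzero remainders $\mod n$ since each closed neighborhood sum is $j+1$, and then invokes Theorem~\ref{thm: nonzero remainder mod n existence criteria and upper bound by n chi}, whose proof adds exactly the shifted proper coloring $n\ell'$ that you construct by hand. The only difference is that you inline that lemma rather than cite it.
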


\begin{proof}
    If $n \nmid (j+1)$, then a constant labeling of $G$ by $1$ is a closed coloring with nonzero remainders $\mod n$. Furthermore, note that $\chi(G)\le j+1$ for any $j$-regular graph~$G$. Theorem \ref{thm: nonzero remainder mod n existence criteria and upper bound by n chi} finishes the proof.
\end{proof}

For our next discussion, we recall the definition of an efficient dominating set from \cite[Section 3]{bakkerIEDS}.

\begin{definition}\label{def: independent and dominating sets}
    Let $U\subseteq V$ for a graph $G = (V,E)$. We say that $U$ is
    \begin{itemize}
        \item an \emph{efficient dominating set} if $|N(v)\cap U|=1$ for every $v\in V\backslash U$.
        \item an \emph{independent efficient dominating set} (IEDS) if $|N[v]\cap U|=1$ for every $v\in V$, i.e., it is an independent set and an efficient dominating set.
    \end{itemize}
    We say that a graph $G$ \emph{admits an IEDS} if such a collection of vertices exists for $G$.
\end{definition}

It has been shown by Bakker and van Leeuwen \cite[Theorem 3.3]{bakkerIEDS} that determining whether an arbitrary graph $G$ admits an IEDS is NP-complete. In the same paper, they also provide a linear-time algorithm that determines whether any given finite tree admits an IEDS.

\begin{lemma}\label{lem: Colorings given IEDS}
    If $G=(V,E)$ admits an IEDS $U\subseteq V$ and $\chi(G)<\infty$, then $\chi_{n}(G)$ exists for all $n\in\Z^+$. In particular,
    \[ \chi(G) \leq \chi_{n}(G) \leq \chi(G)+1.\]
    If $U$ can be colored with a single color in some minimal proper labeling of $G$ such that $U$ contains all vertices of that color, then the inequality improves to \[\chi_{n}(G) = \chi(G).\]

\end{lemma}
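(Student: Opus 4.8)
The plan is to take a minimal proper labeling $\ell'$ of $G$ (of order $\chi(G)$) and perturb it on the IEDS $U$ to kill any closed neighborhood sums that are $\equiv 0 \bmod n$. First I would normalize: rescale $\ell'$ so that its range lies in $n\Z$, so that every closed neighborhood sum $S_v = \sum_{w\in N[v]}\ell'(w)$ is a multiple of $n$, in particular $\equiv 0 \bmod n$ — currently a "bad" labeling, but one we can fix cheaply. The key observation is the defining property of an IEDS: for every $v\in V$, the set $N[v]$ contains exactly one vertex of $U$. So if I add a fixed value $c$ to $\ell'(u)$ for \emph{every} $u\in U$ simultaneously, every closed neighborhood sum increases by exactly $c$. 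Choosing $c$ to be any integer with $c\not\equiv 0\bmod n$ (for instance $c=1$, but taken from $n\Z+1$ after rescaling so as not to collide) then makes every closed neighborhood sum $\equiv c\not\equiv 0\bmod n$.

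The second step is to verify that the perturbed labeling $\ell$ is still proper and to count its colors. Since $U$ is independent, no edge lies inside $U$, so shifting all of $U$ by the same constant $c$ cannot create a monochromatic edge within $U$; and edges with at most one endpoint in $U$ are fine provided the shifted value on $U$-vertices avoids the values used outside $U$ — which we can arrange by first rescaling $\ell'$ into $n\Z$ and then adding $c$ from, say, the residue class $n\Z+1$, keeping all $|U|$-side values in a residue class disjoint from the non-$U$ values. In general this introduces at most one new color beyond the $\chi(G)$ colors of $\ell'$ (the single shifted color class could split off from the rest), giving $\chi(G)\le \chi_n(G)\le \chi(G)+1$; the lower bound is immediate from Theorem~\ref{thm: nonzero mod n inequalities} (or directly from the definition). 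For the improved statement: if there is a minimal proper labeling in which $U$ is \emph{exactly} one color class, then shifting that entire color class by $c$ merely relabels one color and produces no new color, so $|\ell|=|\ell'|=\chi(G)$, and combined with the lower bound we get $\chi_n(G)=\chi(G)$.

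The only mild subtlety — and the one place to be careful rather than a genuine obstacle — is bookkeeping to ensure properness after the shift: one must choose the shift constant so the new $U$-values neither coincide with each other in a forbidden way (automatic, since $U$ is independent) nor with adjacent non-$U$ values. Working modulo $n$ throughout (range of $\ell'$ in $n\Z$, shift drawn from $n\Z+1$) cleanly separates the "$U$-colors" from the "non-$U$-colors" and settles this. I expect no harder step; the heart of the argument is simply the one-line identity "each $N[v]$ meets $U$ in exactly one vertex, hence a uniform shift on $U$ shifts every closed sum by the same amount."
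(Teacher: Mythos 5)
Your central idea---exploit $|N[v]\cap U|=1$ to control every closed neighborhood sum at once, after rescaling a minimal proper labeling into $n\Z$---is the same as the paper's, but your execution has a genuine gap in the color count for the general bound $\chi_n(G)\le\chi(G)+1$. You \emph{shift} every vertex of $U$ by the same constant $c$ and then assert that this ``introduces at most one new color \dots (the single shifted color class could split off from the rest).'' That parenthetical assumes $U$ is monochromatic under $\ell'$, which is the hypothesis of the \emph{second} part of the lemma only. An IEDS can meet several color classes of every minimal proper labeling: in $C_6$ with vertices $1,\dots,6$, the set $U=\{1,4\}$ is an IEDS, yet the essentially unique proper $2$-coloring gives $1$ and $4$ different colors. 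Shifting such a $U$ by $c$ produces $|\ell'(U)|$ new colors, so your construction only yields $\chi_n(G)\le |\ell'(V\setminus U)|+|\ell'(U)|\le 2\chi(G)$ in general, not $\chi(G)+1$.

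The fix is small and is exactly what the paper does: instead of shifting, \emph{overwrite} every vertex of $U$ with one fixed value (the paper uses $1$, after placing the range of the minimal proper labeling in $n\Z\cap(1,\infty)$). Independence of $U$ keeps the labeling proper, the value $1$ is disjoint from the non-$U$ values, each closed sum becomes $1$ plus a multiple of $n$, and at most one new color is introduced. Your argument for the improved statement $\chi_n(G)=\chi(G)$ is fine as written, since there the hypothesis really does make $U$ a single color class, and your lower bound and properness bookkeeping are correct throughout.
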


\begin{proof}
    Let $U$ be an IEDS for $G$.
    Write $\ell$ for a minimal proper labeling of $G$ and suppose its range lies in $n\Z \cap (1, \infty)$. The proof is finished by defining a closed coloring $\ell'$ with nonzero remainders $\mod n$ of $G$ via
    \[
        \ell'(v) =
        \begin{cases}
            \ell(v), & \text{if } v\in V\backslash U,\\
            1, & \text{if } v\in U.\qquad \qedhere
        \end{cases}
    \]
\end{proof}

\section{Examples for $\chi_{n}(G)$} \label{sec:4}

We begin with the \emph{path on $m$ vertices}, $P_m$, the \emph{complete graph on $m$ vertices}, $K_m$, the \emph{cycle on $m$ vertices}, $C_m$, and the \emph{star on $m+1$ vertices}, $S_m$.

\begin{theorem}\label{thm: k mod n calc for P_m}
    Let $n,m\in \Z^+$ with $n,m\geq 2$. Then
    \begin{equation*}
         \chi_n(P_m)=
        \begin{cases}
        2, & \text{if } n\geq 3 \text{ or } n = 2 \text{ with } m\leq 3, \\
        3, & \text{if } n = 2 \text{ and } m \geq 4.
        \end{cases}
    \end{equation*}
\end{theorem}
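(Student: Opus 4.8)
The plan is to separate the lower and upper bounds, with the only genuine case split occurring at $n=2$. Write $P_m = v_1 v_2 \cdots v_m$. Since $P_m$ contains an edge, $\chi(P_m)=2$, so $\chi_n(P_m)\ge 2$ for every $n$ directly from the definition (or via Theorem~\ref{thm: nonzero remainder mod n existence criteria and upper bound by n chi}). It therefore remains to produce a coloring of order $2$ in the first case, and to rule out order $2$ while exhibiting order $3$ in the second.

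For the upper bound in the first case I would use a single explicit construction: the alternating labeling $\ell(v_i)=0$ for $i$ odd and $\ell(v_i)=1$ for $i$ even, which is proper and of order $2$. The closed neighborhood sum equals $1$ at each of the two endpoints, and at an interior vertex $v_i$ it equals $2\cdot 0+1=1$ when $i$ is even and $0+2\cdot 1=2$ when $i$ is odd. Hence every closed neighborhood sum lies in $\{1,2\}$, so none is $\equiv 0\bmod n$ as soon as $n\ge 3$. When $n=2$, the value $2$ occurs only if there is an interior vertex of odd index, i.e.\ only if $m-1\ge 3$; thus for $m\le 3$ all sums equal $1$ and are odd. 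This yields $\chi_n(P_m)=2$ in the first case.

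Next, for $n=2$ and $m\ge 4$ I would prove $\chi_2(P_m)\ge 3$. Because $P_m$ is connected and bipartite, any proper labeling of order $2$ is one of the two alternating labelings; up to interchanging the two values it is $\ell(v_i)=a$ for $i$ odd and $\ell(v_i)=b$ for $i$ even, with $a\ne b$. For $m\ge 4$ the vertices $v_2$ and $v_3$ are both interior, so the closed sums $2a+b$ and $a+2b$ must be odd, forcing both $a$ and $b$ odd; but then the endpoint sum $a+b$ is even, a contradiction. So no proper order-$2$ coloring with nonzero remainders $\bmod 2$ exists. For the matching bound $\chi_2(P_m)\le 3$ I would invoke Lemma~\ref{lem: Colorings given IEDS}: the path $P_m$ admits an IEDS, namely $\{v_i : i\equiv 1 \bmod 3\}$ if $m\equiv 1\bmod 3$ and $\{v_i : i\equiv 2\bmod 3\}$ otherwise, since a one-line check shows each $N[v_j]$ meets this set exactly once (equivalently, one can colour this set with $1$ and the rest with $0$ and $2$, as in the proof of that lemma). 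As $\chi(P_m)=2<\infty$, Lemma~\ref{lem: Colorings given IEDS} gives $\chi_2(P_m)\le \chi(P_m)+1=3$, finishing the proof. The only mildly delicate point is the parity obstruction of the previous paragraph; the remaining steps are straightforward verifications.
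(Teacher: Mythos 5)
Your proposal is correct and follows essentially the same route as the paper: the alternating $\{0,1\}$ labeling for the first case, and for $n=2$, $m\ge 4$ the same parity obstruction (the interior sums $2a+b$ and $a+2b$ force $a,b$ odd, making the endpoint sum $a+b$ even). The only cosmetic difference is that you obtain the order-$3$ upper bound by exhibiting an IEDS and invoking Lemma~\ref{lem: Colorings given IEDS}, whereas the paper writes out the period-$3$ colorings $(0,1,2,0,1,2,\ldots)$ and $(1,2,0,\ldots)$ explicitly; these are in fact the very colorings the lemma produces from your sets $\{v_i: i\equiv 1 \bmod 3\}$ and $\{v_i: i\equiv 2\bmod 3\}$, and your case split on $m \bmod 3$ for which set to use checks out.
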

\begin{proof}
    For $n\geq 3$, we may label the vertices alternating between $0$ and $1$ for a proper coloring. Then the closed neighborhood
    sums are $1$ and $2$, and we have a proper closed coloring with nonzero remainders $\mod n$.

    If $n = 2$, proper closed $2$-colorings with nonzero remainders $\mod 2$ for $m=2$ and $m=3$ are provided by $(0,1)$ and $(0,1,0)$, respectively.
    For $m\geq 4$, we first show that $\chi_{n}(P_m) > 2$.
    If not, there is a proper closed $2$-coloring with nonzero remainders $\mod 2$ of the form $(a,b,a,b,\ldots)$. The neighborhood sums for the second and third vertex give $b\equiv 2a+b \equiv 1 \mod 2$ and $a\equiv a+2b \equiv 1 \mod 2$, respectively. Thus, we find $a+b \equiv 0\not\equiv 1 \mod 2$ for the neighborhood sum for the first vertex, a contradiction.
    It remains to exhibit a proper closed $3$-coloring with nonzero remainders $\mod 2$ of $P_m$.
    If $m \equiv 0 \mod 3$, then one such coloring is provided by $(0,1,2,0,1,2,\ldots, 0,1,2)$.
    If $m \not\equiv 0 \mod 3$, then $(1,2,0,1,2,\ldots)$ works.
\end{proof}

\begin{theorem}\label{thm: k mod n calc for K_m}
    Let $n,m\in\Z^+$ with $n\geq 2$. Then
    \begin{equation*}
        \chi_n(K_m)=m.
    \end{equation*}
\end{theorem}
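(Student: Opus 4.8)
The plan is to prove the two inequalities $\chi_n(K_m) \ge m$ and $\chi_n(K_m) \le m$ separately. The lower bound is immediate: a closed coloring with nonzero remainders $\mod n$ is in particular a proper labeling, so $\chi(K_m) \le \chi_n(K_m)$ directly from the definitions (cf.\ the first inequality of Theorem~\ref{thm: nonzero remainder mod n existence criteria and upper bound by n chi}), and $\chi(K_m) = m$ since the $m$ vertices of a complete graph are pairwise adjacent and hence must all receive distinct colors.

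For the upper bound I would first record the structural observation that drives everything: in $K_m$ every vertex is adjacent to every other vertex, so $N[v] = V$ for all $v \in V$, and therefore every closed neighborhood sum is one and the same integer $S := \sum_{w \in V} \ell(w)$. Since $K_m$ is complete, a proper labeling of $K_m$ is precisely an assignment of $m$ pairwise distinct integers to its vertices, and such a labeling is a closed coloring with nonzero remainders $\mod n$ exactly when $S \not\equiv 0 \mod n$. So it suffices to exhibit $m$ pairwise distinct integers whose sum is not divisible by $n$.

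To do this I would fix the labels $1, 2, \dots, m-1$ on $m-1$ of the vertices and let the last vertex carry a label $c \in \Z^+ \setminus \{1, \dots, m-1\}$ still to be chosen; the resulting total sum equals $\tfrac{m(m-1)}{2} + c$, which runs over all sufficiently large integers as $c$ varies over that infinite set, and in particular over some value not congruent to $0 \mod n$. Choosing such a $c$ yields a proper closed coloring of $K_m$ with nonzero remainders $\mod n$ of order $m$; in particular this also shows $\chi_n(K_m)$ exists. (For the degenerate case $m = 1$ one simply labels the single vertex by $1$.) Combining the two bounds gives $\chi_n(K_m) = m$.

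As for the main obstacle: there is no substantial one. The entire content of the statement is the observation that on a complete graph all closed neighborhood sums collapse to the single number $S$, which turns the whole family of local modular constraints into one easily satisfied condition; the only points that require any care are the trivial case $m = 1$ and verifying that the freedom in the choice of $c$ genuinely lets us avoid the one forbidden residue class.
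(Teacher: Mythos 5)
Your proof is correct and takes essentially the same approach as the paper: both reduce the problem to exhibiting $m$ distinct integer labels whose common closed neighborhood sum $S=\sum_{w\in V}\ell(w)$ is nonzero $\mod n$. The paper just makes the explicit choice $1, n, 2n, \dots, (m-1)n$, for which $S\equiv 1 \mod n$, instead of your existence argument for the last label $c$.
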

\begin{proof}
     A labeling of the vertices with $1$ and $n, 2n, \dots, (m-1)n$ gives the result.
\end{proof}

\begin{theorem}\label{thm: chi n calc for C_m}
    Let $n,m\in\Z^+$ with $m\geq 3$ and $n\geq 2$. Then    
    \[
        \chi_n(C_m) = \chi(C_m).
    \]
\end{theorem}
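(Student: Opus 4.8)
The inequality $\chi(C_m)\le\chi_n(C_m)$ is built into Theorem~\ref{thm: nonzero remainder mod n existence criteria and upper bound by n chi} (or is immediate, since a proper closed coloring with nonzero remainders is in particular proper), so the task is to prove $\chi_n(C_m)\le\chi(C_m)$. Every cycle is $2$-regular, and for $n\ge 2$ we have $n\mid 3$ only when $n=3$; hence Theorem~\ref{thm: regular chi n} already gives $\chi_n(C_m)=\chi(C_m)$ for all $n\ne 3$. So the plan reduces to the single case $n=3$, which I would split according to the parity of $m$, using the standard values $\chi(C_m)=2$ for $m$ even and $\chi(C_m)=3$ for $m$ odd.

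When $n=3$ and $m$ is even, the alternating labeling $0,1,0,1,\dots$ is proper of order $2$, with closed neighborhood sums $1+0+1=2$ and $0+1+0=1$, both nonzero modulo $3$; this gives $\chi_3(C_m)\le 2=\chi(C_m)$. The substantive case is $n=3$ with $m$ odd. The guiding arithmetic fact is that for integers with residues in $\{0,1,2\}$, a triple sums to $0$ modulo $3$ exactly when the three residues are all equal or all distinct. Consequently, if a labeling uses only two residues $r\ne s$ modulo $3$ and has no three consecutive vertices of the same residue, then each closed neighborhood sum is congruent to $2r+s$ or $r+2s$, and neither of these is $0$ modulo $3$ because $r\ne s$. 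Such a residue pattern cannot be realized by an honest proper coloring with colors $\{0,1,2\}$ on an \emph{odd} cycle — excluding monochromatic and rainbow windows forces $\ell(v_{i-1})=\ell(v_{i+1})$ for every $i$, so $\ell$ would be constant along steps of $2$ and hence constant, contradicting properness — so the remedy is to use two distinct colors that share a residue. Concretely I would take the colors $0$ and $3$ (both of residue $0$) together with $1$, and use the cyclic labeling consisting of the block $(0,3,1)$ followed by $(0,1)$ repeated $(m-3)/2$ times.

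What then remains is a short finite verification for this explicit labeling: (i) it is proper — only consecutive entries and the wraparound edge $v_{m-1}v_0$ (joining $1$ to $0$) need checking — and (ii) among any three consecutive vertices both residues $0$ and $1$ occur (the residue pattern is $0,0,1,0,1,0,1,\dots,1$, with isolated $1$'s and runs of $0$'s of length at most $2$), so no closed neighborhood sum is divisible by $3$. Since this labeling has order $3$, we conclude $\chi_3(C_m)\le 3=\chi(C_m)$; for $m=3$ it degenerates to $(0,3,1)$ on $C_3=K_3$, matching Theorem~\ref{thm: k mod n calc for K_m}. The only real obstacle is conceptual — noticing that three residues do not suffice on odd cycles and that one should decouple colors from residues — after which everything is routine checking.
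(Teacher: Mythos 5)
Your proposal is correct and follows essentially the same route as the paper: reduce to $n=3$ via Theorem~\ref{thm: regular chi n}, use the alternating $0,1$ labeling for even $m$, and for odd $m$ insert a single label $3$ into an otherwise alternating $0,1$ pattern (the paper uses $(3,0,1,0,1,\dots)$, you use the rotationally equivalent $(0,3,1,0,1,\dots)$). The extra discussion of why two residues cannot be properly realized on an odd cycle is correct but not needed for the bound.
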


\begin{proof}
    For $n\not=3$, the result follows from Theorem \ref{thm: regular chi n}.
    For $n=3$, the bound $\chi_n(C_m) \ge \chi(C_m)$ arises from Theorem \ref{thm: nonzero mod n inequalities}. Equality may be achieved by a labeling that alternates between $0$ and $1$ when $m$ is even. For $m$ odd, a labeling that starts with a $3$ and then alternates between $0$ and $1$ will work.
\end{proof}

\begin{theorem}
    Let $n,m\in\Z^+$ with $n \geq 2$. Then
    \[ \chi_n(S_m) = 2.\]
\end{theorem}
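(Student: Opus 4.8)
The plan is to prove the two bounds $\chi_n(S_m)\ge 2$ and $\chi_n(S_m)\le 2$ separately. Write $c$ for the center vertex of $S_m$ and $v_1,\dots,v_m$ for its leaves, so the edges of $S_m$ are exactly $cv_i$. For the lower bound, note that since $m\ge 1$ the graph $S_m$ contains an edge, hence $\chi(S_m)=2$; as any proper closed coloring with nonzero remainders $\mod n$ is in particular a proper coloring, once we know $\chi_n(S_m)$ exists we get $\chi_n(S_m)\ge\chi(S_m)=2$ (this is the first inequality of Theorem \ref{thm: nonzero mod n inequalities}).

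For the upper bound I would exhibit an explicit proper closed coloring with nonzero remainders $\mod n$ of order $2$: label $c$ with $1$ and every leaf $v_i$ with $n$. Since $n\ge 2$, the labels $1$ and $n$ are distinct, and the only adjacencies are between $c$ and a leaf, so the labeling is proper and uses exactly two colors. There are just two kinds of closed neighborhood to check: $N[c]=\{c,v_1,\dots,v_m\}$, with label sum $1+mn\equiv 1\not\equiv 0\mod n$; and $N[v_i]=\{v_i,c\}$, with label sum $n+1\equiv 1\not\equiv 0\mod n$. Both are nonzero modulo $n$, so this is a valid coloring, it has finite order, and therefore $\chi_n(S_m)$ exists and is at most $2$. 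Combining the two bounds gives $\chi_n(S_m)=2$.

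Alternatively, one can bypass the explicit computation entirely by appealing to Lemma \ref{lem: Colorings given IEDS}: the singleton $\{c\}$ is an IEDS of $S_m$ (every closed neighborhood meets $\{c\}$ in exactly one vertex), and in the minimal proper $2$-coloring of $S_m$ the center is forced to receive a color different from every leaf, so $\{c\}$ is exactly one of the two color classes; the lemma then yields $\chi_n(S_m)=\chi(S_m)=2$ directly. Either route is essentially immediate, so there is no genuine obstacle; the only point worth a remark is the boundary case $m=1$, where $S_1=K_2$, but the construction (and the IEDS argument) goes through verbatim and the value is in any case consistent with Theorem \ref{thm: k mod n calc for K_m}.
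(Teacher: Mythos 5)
Your proof is correct and matches the paper's approach: the paper labels the center with $1$ and the leaves with $0$, which is the same computation as your choice of $1$ and $n$ (both closed neighborhood sums reduce to $1 \bmod n$). The alternative route via Lemma \ref{lem: Colorings given IEDS} with $U=\{c\}$ is also valid but unnecessary here.
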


\begin{proof}
    A labeling of the central vertex with $1$ and the circumferential vertices with $0$ works.
\end{proof}

Recall that the \emph{friendship graph}, $F_m$, consists of $m$ copies of $C_3$ joined at a single vertex.

\begin{theorem}
    Let $n,m\in\Z^+$ with $n\geq2$. Then
    \[ \chi_{n}(F_m) = 3.\]
\end{theorem}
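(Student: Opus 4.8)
The plan is to prove $\chi_n(F_m)=3$ by establishing matching lower and upper bounds. For the lower bound, I would note that $F_m$ contains a triangle, so $\chi(F_m)\ge 3$, and in fact $\chi(F_m)=3$ via the coloring that assigns one color to the hub and the two remaining colors to the two non-hub vertices of each triangle. Since every closed coloring with nonzero remainders is in particular a proper labeling, the definition (cf.\ Theorem \ref{thm: nonzero remainder mod n existence criteria and upper bound by n chi}) gives $\chi_n(F_m)\ge\chi(F_m)=3$.

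For the upper bound I would exhibit an explicit proper closed coloring with nonzero remainders $\mod n$ of order $3$. Let $c$ denote the hub and let $\{v_i,w_i\}$ be the non-hub pair of the $i$-th triangle, $1\le i\le m$. Since every non-hub vertex is adjacent to $c$, a $3$-label proper coloring is essentially forced: $c$ must carry the ``hub label'' alone, and $\{\ell(v_i),\ell(w_i)\}$ must be the other two labels for each $i$. I would choose the labels $1$, $n$, $2n$, set $\ell(c)=1$, and set $\{\ell(v_i),\ell(w_i)\}=\{n,2n\}$ for all $i$. Because $n\ge 2$, these three integers are distinct, so the labeling is proper. Each triangle's closed-neighborhood sum equals $1+n+2n=1+3n\equiv 1\not\equiv 0\mod n$, and the hub's closed-neighborhood sum equals $1+m(n+2n)=1+3mn\equiv 1\not\equiv 0\mod n$. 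Hence the labeling works and $\chi_n(F_m)\le 3$, completing the proof.

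Alternatively, and more structurally, one can observe that $U=\{c\}$ is an IEDS of $F_m$ (for every $v$, $N[v]$ meets $\{c\}$ in exactly one vertex) and that in any proper labeling of $F_m$ the color class of the hub is exactly $\{c\}$, since all other vertices are adjacent to $c$; Lemma \ref{lem: Colorings given IEDS} then yields $\chi_n(F_m)=\chi(F_m)=3$ directly.

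There is essentially no serious obstacle here; the only points requiring care are that one genuinely needs $n\ge 2$ for the three chosen labels to be distinct (which is also exactly the obstruction to existence when $n=1$), and that the construction must be verified at the high-degree hub as well as at the triangle vertices. The choice that makes both families of sums trivial to control is to take the two non-hub labels $\equiv 0\mod n$ (here $n$ and $2n$), so that every closed-neighborhood sum reduces mod $n$ to $\ell(c)=1$.
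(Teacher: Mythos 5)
Your proof is correct and follows essentially the same approach as the paper: label the hub with $1$ and the two non-hub vertices of each triangle with distinct multiples of $n$ (the paper uses $0$ and $n$ where you use $n$ and $2n$), so that every closed-neighborhood sum is $\equiv 1 \bmod n$, with the lower bound coming from $\chi(F_m)=3$. Your alternative via Lemma \ref{lem: Colorings given IEDS} with $U=\{c\}$ is also valid, though the paper does not take that route.
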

\begin{proof}
    Label the central vertex with $1$
    and label the remaining two vertices of each $C_3$ with $0$ and $n$. As $\chi(F_m) = 3$, Theorem \ref{thm: nonzero mod n inequalities} finishes the proof.
\end{proof}

Next, we turn to the \emph{complete bipartite graph}, $K_{i,j}$, with parts of sizes $i$ and $j$.

\begin{theorem} Let $n,i,j \in \Z^+$ with $n \geq 2$. Then \[\chi_n(K_{i,j}) = 2.\]
\end{theorem}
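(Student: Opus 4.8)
The plan is to prove the two inequalities separately: the lower bound is immediate, and the upper bound needs a single explicit construction together with a short case split on divisibility. For $\chi_n(K_{i,j})\ge 2$, note that since $i,j\ge 1$ the graph $K_{i,j}$ is bipartite and contains an edge, so $\chi(K_{i,j})=2$; any proper closed coloring with nonzero remainders $\mod n$ is in particular a proper coloring, hence has order at least $2$ (this is also the first inequality of Theorem~\ref{thm: nonzero mod n inequalities}).

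For the upper bound I would exhibit a proper closed $2$-coloring with nonzero remainders $\mod n$. Let $A$ and $B$ be the parts of $K_{i,j}$, of sizes $i$ and $j$ respectively. Since every edge joins $A$ to $B$, assigning a single label $a$ to all of $A$ and a single label $b\ne a$ to all of $B$ produces a proper coloring of order exactly $2$, and for such a coloring the closed neighborhood sum equals $a+jb$ at every vertex of $A$ and $b+ia$ at every vertex of $B$. So it suffices to find integers $a\ne b$ with $a+jb\not\equiv 0\mod n$ and $b+ia\not\equiv 0\mod n$, and I would split into three exhaustive cases. If $n\nmid j$, take $(a,b)=(0,1)$, so the sums are $j$ and $1$. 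If $n\mid j$ but $n\nmid i$, take $(a,b)=(1,0)$, so the sums are $1$ and $i$. If $n\mid i$ and $n\mid j$, take $(a,b)=(1,n+1)$, so that $n\mid jb$ and $n\mid ia$ and both sums reduce to $1\mod n$. In every case $n\ge 2$ ensures $1\not\equiv 0\mod n$, the two labels are distinct, and all required congruences hold, so the coloring has order $2$.

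The argument is elementary and I do not anticipate a genuine obstacle. The only point needing care is the third case, where one must use the divisibility hypotheses to kill the cross terms $ia$ and $jb$ and then pick two \emph{distinct} labels that are simultaneously nonzero modulo $n$---which is why one takes $n+1$ rather than staying inside $\{0,1\}$. If one preferred to avoid cases, counting the ``good'' pairs in $\Z_n\times\Z_n$ settles $n\ge 3$ and $n=2$ can be checked directly, but the three explicit colorings above are shorter and match the style of the surrounding examples.
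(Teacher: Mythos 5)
Your proof is correct and essentially identical to the paper's: both establish the lower bound from $\chi(K_{i,j})=2$ and then use the same three constant-per-part labelings $(0,1)$, $(1,0)$, and $(1,n+1)$, split according to whether $n$ divides $i$ and/or $j$. The only difference is the order in which the cases are presented.
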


\begin{proof}
    Let $V_1$ and $V_2$ with $|V_1|=i$ and $|V_2|=j$ denote the vertex sets belonging to the two parts of $K_{i,j}$.
    Labeling all vertices of $V_1$ and $V_2$ with $1$ and $0$, respectively, works unless $n\mid i$. Similarly,
    labeling the vertices of $V_1$ and $V_2$ with $0$ and $1$, respectively, works unless $n\mid j$. Finally, if $n\mid i$ and $n\mid j$, then labeling the vertices of $V_1$ and $V_2$ with $1$ and $n+1$, respectively, works.
\end{proof}

Next, turn to the \emph{complete $m$-ary tree of height $d$}, written~$T_{m,d}$.

\begin{theorem}
 Let $n,m,d\in\Z^+$ with $n,m,d\geq 2$. Then
 \begin{equation*}
 \chi_n(T_{m,d})=\begin{cases}
     2, & \text{if } n\geq 4 \text{ or } n \nmid (m+1) \text{ or } d=2,\\
     3, & else.
 \end{cases}
 \end{equation*}
\end{theorem}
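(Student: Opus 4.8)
The plan is to establish every part with \emph{level-uniform} labelings: assign to each vertex at depth $i$ (with the root at depth $0$ and the leaves at depth $d$) a common label $r_i\in\Z$, so that the labeling is proper exactly when $r_i\neq r_{i+1}$ for all $i$, and the closed neighborhood sum at a depth-$i$ vertex equals $r_0+mr_1$ when $i=0$, equals $r_{i-1}+r_i+mr_{i+1}$ when $0<i<d$, and equals $r_{d-1}+r_d$ when $i=d$. Since $T_{m,d}$ is a tree with at least one edge we have $\chi(T_{m,d})=2$, so Theorem~\ref{thm: nonzero mod n inequalities} gives $\chi_n(T_{m,d})\ge 2$ in all cases, and the problem reduces to choosing the $r_i$ appropriately (and, in the ``else'' case, to ruling out order $2$).

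For $\chi_n(T_{m,d})\le 2$ in the first case I would split on the hypotheses. If $n\nmid(m+1)$, set $r_i=1$ for $i$ even and $r_i=0$ for $i$ odd; then every closed neighborhood sum is $1$ or $m+1$, and neither is $\equiv 0\mod n$. Otherwise $n\mid(m+1)$, whence $n\nmid m$ since $(m,m+1)=1$. If in addition $d=2$, set $r_i=0$ for $i$ even and $r_i=1$ for $i$ odd; the only three closed neighborhood sums are $m,\,1,\,1$, none $\equiv 0\mod n$. If instead $d\ge 3$ and $n\ge 4$, set $r_i=1$ for $i$ even and $r_i=2$ for $i$ odd; using $m\equiv-1\mod n$, the closed neighborhood sums reduce mod $n$ to elements of $\{-1,1,2,3\}$, all nonzero because $n\ge 4$. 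In each subcase, combined with $\chi_n(T_{m,d})\ge 2$, this gives $\chi_n(T_{m,d})=2$.

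For the ``else'' case ($n\in\{2,3\}$, $n\mid(m+1)$, $d\ge 3$) I would first prove $\chi_n(T_{m,d})\ge 3$. A proper labeling of order $2$ of the connected bipartite graph $T_{m,d}$ must be constant on each part, say $a$ on the even depths and $b$ on the odd depths with $a\ne b$. As $d\ge 3$, both depth $1$ and depth $2$ are internal, so their closed neighborhood sums $b+(m+1)a\equiv b$ and $a+(m+1)b\equiv a\mod n$ force $a\not\equiv 0$ and $b\not\equiv 0$; the leaf sum $a+b$ must be $\not\equiv 0$; and the root sum $a+mb\equiv a-b$ must be $\not\equiv 0$, i.e.\ $a\not\equiv b$. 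For $n=2$ this requires $a,b,a+b$ all odd, which is impossible; for $n=3$ it forces $\{a,b\}\equiv\{1,2\}$ and hence $a+b\equiv 0$, again impossible. For the matching bound $\chi_n(T_{m,d})\le 3$ I would exhibit an explicit three-valued level pattern, parametrized by the distance $k=d-i$ to the leaves so that it is uniform in $d$. For $n=3$: let $r_i=0$ when $k$ is even, $r_i=1$ when $k\equiv 1\mod 4$, and $r_i=2$ when $k\equiv 3\mod 4$; then consecutive depths carry distinct labels, and a short computation — using $m\equiv-1\mod 3$ and that two consecutive values of $k$ have opposite parity — shows every closed neighborhood sum is $\not\equiv 0\mod 3$. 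For $n=2$ (so $m$ is odd): choose $s\in\{0,1\}$ with $s\equiv d-1$ or $s\equiv d\mod 3$ (possible since $\{d-1,d\}$ always meets $\{0,1\}$ modulo $3$), label depth $i$ by $1$ when $i\equiv s\mod 3$, and alternate the labels $0$ and $2$ on the remaining depths so the labeling stays proper; since $m$ is odd, the parity of each closed neighborhood sum equals the parity of the number of vertices in that neighborhood carrying the label $1$, and the arithmetic progression of ``$1$''-depths is arranged so this count is always odd. With the lower bound this gives $\chi_n(T_{m,d})=3$.

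The hard part will be this last upper bound $\chi_n\le 3$: every internal non-root vertex of $T_{m,d}$ has degree $m+1\ge 3$, so for $m\ge 2$ the tree admits no independent efficient dominating set and Lemma~\ref{lem: Colorings given IEDS} is unavailable, while Theorem~\ref{thm: nonzero remainder mod n existence criteria and upper bound by n chi} only yields $\chi_n\le 2n\in\{4,6\}$. Hence the three-valued patterns must be built by hand, with the root, internal-level, and leaf constraints satisfied simultaneously; anchoring the periodic patterns at the leaf end rather than the root is what keeps the verification uniform in $d$, and the observation that the two-color constraints become jointly unsatisfiable precisely when $d\ge 3$, $n\mid(m+1)$, and $n\le 3$ is exactly what produces the claimed dichotomy.
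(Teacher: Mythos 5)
Your proposal is correct and takes essentially the same route as the paper: row‑constant labelings throughout, the observation that any $2$-coloring of the connected bipartite tree must alternate two values $a,b$ per depth and produces the closed sums $a-b,\,a,\,b,\,a+b$ (solvable with all nonzero iff $n\ge 4$), and explicit periodic three‑label row patterns for the remaining case $n\in\{2,3\}$, $n\mid(m+1)$, $d\ge 3$. The only difference is cosmetic — the paper cycles $0,1,n$ (or $1,n,0$) from the root with a case split on $d\bmod 3$, while you anchor your patterns at the leaves or via a chosen offset — and your specific labelings all check out.
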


\begin{proof}
    If $n \nmid (m+1)$, a constant row labeling alternating between labels $1$ and $0$ starting from the root shows $\chi_n(T_{m,d})=2$.  If $n \mid (m+1)$ and $d=2$, a constant row labeling alternating between $0$ and $1$ starting from the root will do.
    If $n\mid(m+1)$ and $d\ge 3$, then $\chi_n(T_{m,d})=2$ is only possible for a constant row labeling alternating between suitably chosen labels $a$ and $b$ starting from the root. This generates closed neighborhood sums congruent to $a-b$, $a$, $b$, and $a+b \mod n$. Finding nonzero $a,b\in\Z_n$ so that $a\not= \pm b$ is possible if and only if $n \geq 4$. 

    We are reduced to the case of $n\mid(m+1)$, $d\ge 3$, and $n=2,3$, which will require at least three colors. In this case, a constant row labeling cycling between labels $0,1,n$ from the root works for $d\not\equiv 0 \mod 3$, while a constant row labeling cycling between labels $1,n,0$ from the root works for $d\equiv 0\mod 3$.
\end{proof}

Next we look at the \emph{regular, infinite tilings of the plane}. Write $R_3$, $R_4$, and $R_6$ for the tilings by regular triangles, squares, and hexagons, respectively.
\begin{theorem}\label{thm: regular infinite tilins of plane x n k}
    Let $n\in\Z^+$ with $n\geq 2$. Then
    \begin{align*}
    \chi_{n} (R_3) = 3,\\
    \chi_{n} (R_4) = 2,\\
    \chi_{n} (R_6) = 2.
    \end{align*}
\end{theorem}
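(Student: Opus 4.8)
The plan is to exploit that each of these tilings is vertex-transitive and regular, so Theorem~\ref{thm: regular chi n} already pins down $\chi_n$ for all but finitely many $n$, and to dispatch the exceptional moduli with explicit small labelings. First, the lower bounds: each tiling has finite chromatic number, with $\chi(R_3)=3$ (the triangular lattice contains a triangle and is $3$-colorable) and $\chi(R_4)=\chi(R_6)=2$ (these lattices are bipartite), so Theorem~\ref{thm: nonzero mod n inequalities} gives $\chi_n(R_3)\ge 3$ and $\chi_n(R_4),\chi_n(R_6)\ge 2$ for every $n\ge 2$. It remains to realize these bounds from above.

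Since $R_4$ is $4$-regular, $R_6$ is $3$-regular, and $R_3$ is $6$-regular, Theorem~\ref{thm: regular chi n} (via the constant labeling $1$) gives $\chi_n(R_4)=\chi(R_4)$ whenever $n\nmid 5$, $\chi_n(R_6)=\chi(R_6)$ whenever $n\nmid 4$, and $\chi_n(R_3)=\chi(R_3)$ whenever $n\nmid 7$. So the only cases left are $n=5$ for $R_4$, $n\in\{2,4\}$ for $R_6$, and $n=7$ for $R_3$. For $R_4$ and $R_6$ I would take the proper $2$-coloring given by the bipartition and label the two parts $0$ and $1$. In $R_4$ the two resulting closed neighborhood sums are $0+4\cdot 1=4$ and $1+4\cdot 0=1$, neither divisible by $5$; in $R_6$ they are $0+3\cdot 1=3$ and $1+3\cdot 0=1$, neither divisible by $2$ or by $4$. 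This settles $\chi_n(R_4)=2$ and $\chi_n(R_6)=2$ in all remaining cases.

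The one genuinely structural step is $n=7$ for $R_3$, where the lower bound already forces order at least $3=\chi(R_3)$, so I only need a proper labeling of order $3$ with closed neighborhood sums avoiding $0\bmod 7$. The key observation is that the six neighbors of any vertex $v$ of $R_3$, in cyclic order, form a $6$-cycle (consecutive neighbors span a common triangle with $v$); in a proper $3$-coloring these six neighbors all avoid the color of $v$ and thus receive a proper $2$-coloring of $C_6$, which is the alternating one, so $v$ has exactly three neighbors of each of the two colors different from its own. Consequently, if the three color classes are labeled $a,b,c$, the closed neighborhood sums are $a+3(b+c)$, $b+3(a+c)$, and $c+3(a+b)$. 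Choosing $(a,b,c)=(0,1,3)$ yields sums $12,10,6$, which reduce to $5,3,6 \bmod 7$, all nonzero; this gives $\chi_7(R_3)=3$.

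I expect the $n=7$, $R_3$ case to be the only real obstacle, and even there the work is just locating a good triple of labels. I would note in passing that no single labeling can work for all $n$ simultaneously, since each closed neighborhood sum is a fixed nonzero integer and hence divisible by some prime; this is why the small case splits above are unavoidable.
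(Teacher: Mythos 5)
Your proof is correct, and it reaches the same endpoint as the paper but by a noticeably different organization. The paper writes down, for each tiling, the generic closed neighborhood sums of a proper $2$- or $3$-coloring ($\alpha+q\beta$ and $q\alpha+\beta$ for $R_4,R_6$; $\alpha+3\beta+3\gamma$ and its permutations for $R_3$) and then exhibits one explicit label choice for each residue case of $n$ (e.g.\ $(1,0)$ versus $(1,n+1)$ for $R_4,R_6$, and $(1,0,n)$ versus $(1,4,7)$ for $R_3$), so a single uniform computation covers all $n$. You instead invoke Theorem~\ref{thm: regular chi n} to dispatch every $n$ not dividing $j+1$, which isolates the exceptional moduli $n=5$ for $R_4$, $n\in\{2,4\}$ for $R_6$, and $n=7$ for $R_3$, and you then check those by hand; this buys a shorter argument for the generic case at the cost of a case split. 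A genuine plus of your write-up is that you actually justify the structural fact the paper uses silently for $R_3$, namely that in \emph{any} proper $3$-coloring each vertex has exactly three neighbors of each of the other two colors (via the $6$-cycle of neighbors forced to alternate); all of your arithmetic checks out. One small quibble with your closing aside: a nonzero integer need not be divisible by a prime (it could be $\pm 1$), so the one-line justification that no single labeling works for all $n$ simultaneously is not airtight as stated; since it is only a parenthetical remark it does not affect the proof.
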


\begin{proof}
    In each case, we have $\chi_n(G) \ge \chi(G)$ by Theorem \ref{thm: nonzero mod n inequalities} and will give a closed coloring with nonzero remainders $\mod n$ to show equality.

    For $R_3$, a proper $3$-coloring with labels $\alpha,\beta, \gamma\in \Z$ results in neighborhood sums of
    \begin{align*}
       \alpha + 3\beta + 3\gamma, \\
       3\alpha + \beta + 3\gamma, \\
       3\alpha + 3\beta + \gamma.
    \end{align*}
    To see $\chi_n(R_3)=3$, use
    $(\alpha,\beta,\gamma)=(1,0,n)$ for $n\ne 3$ and $(\alpha,\beta,\gamma)=(1, 4, 7)$ for $n=3$, respectively.

    For $R_4$ and $R_6$, a proper $2$-coloring with labels $\alpha,\beta\in \Z$ results in neighborhood sums of
    \begin{align*}
        \alpha + q\beta,  \\
        q\alpha +\beta,
    \end{align*}
    where $q=4$ for $R_4$ and $q=3$ for $R_6$, respectively. To see $\chi_n(R_4)=\chi_n(R_6)=2$, use
    $(\alpha,\beta)=(1,0)$ for $n \nmid q$ and $(\alpha,\beta)=(1, n+1)$ for $n\mid q$.
\end{proof}

Write $G(m,j)$ for the \emph{generalized Petersen graph} where $m,j\in\Z^+$ with $m\geq 3$ and $1\leq j < \frac{m}{2}$. We will use the notation $V=\{v_i, u_i \mid 0\le i< m\}$ for the vertex set of $G(m,j) =(V,E)$ with corresponding edge set
\[ E= \{v_i v_{i+1}, v_i u_i, u_i u_{i+j} \mid 0\le i < m\}, \]
where subscripts are to be read modulo $m$.
We may refer to the $v_i$ as the \emph{exterior vertices} and the $u_i$ as the \emph{interior vertices}.

\begin{theorem} \label{thm: chi n gen petersen}
    Let $n,m,j\in\Z^+$ with $n\geq 2$, $m\ge 3$, and $1\le j<\frac{m}{2}$. Then
    \[ \chi_n(G(m,j)) = \chi(G(m,j))\]
    if $n\not=2,4$.

    When $n=2,4$,
    \[ \chi_n(G(m,j)) = \chi(G(m,j))=2\]
    if $2\mid m$ and $2\nmid j$. Otherwise,
    \[ 3 \leq \chi_n(G(m,j)) \leq 6.\]
\end{theorem}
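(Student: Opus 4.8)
The plan is to split on whether $n\mid 4$. First, $G(m,j)$ is $3$-regular: each $v_i$ has neighbours $v_{i-1},v_{i+1},u_i$, and each $u_i$ has neighbours $v_i,u_{i-j},u_{i+j}$, where $u_{i-j}\ne u_{i+j}$ since $1\le j<\frac m2$ gives $0<2j<m$. Since $n\ge 2$, the condition $n\nmid 4$ is exactly $n\ne 2,4$; so when $n\ne 2,4$, applying Theorem~\ref{thm: regular chi n} to the $3$-regular graph $G(m,j)$ (with $n\nmid(3+1)$) yields $\chi_n(G(m,j))=\chi(G(m,j))$ at once. This settles the first assertion.

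Now take $n\in\{2,4\}$. I would first determine $\chi(G(m,j))$. In any proper $2$-colouring the exterior cycle $v_0v_1\cdots v_{m-1}$ is forced to be coloured by the parity of the index, so $m$ must be even; the spokes then force the colour of $u_i$ to have parity opposite to $i$, and the interior edges $u_iu_{i+j}$ are then properly coloured precisely when $j$ is odd. Hence $G(m,j)$ is bipartite if and only if $2\mid m$ and $2\nmid j$. Otherwise $G(m,j)$ is non-bipartite; being connected, $3$-regular, and not $K_4$ (it has $2m\ge 6$ vertices), it has $\chi(G(m,j))=3$ by Brooks' theorem.

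In the subcase $2\mid m$ and $2\nmid j$ we have $\chi(G(m,j))=2$, hence $\chi_n(G(m,j))\ge 2$ by Theorem~\ref{thm: nonzero mod n inequalities}. Colouring the two parts by $0$ and $1$, every closed neighbourhood consists of a vertex together with its three oppositely-coloured neighbours, so the closed neighbourhood sums are $0+1+1+1=3$ and $1+0+0+0=1$, both nonzero mod $2$ and mod $4$; thus $\chi_n(G(m,j))=2$. In the remaining subcase ($m$ odd or $j$ even), $G(m,j)$ is non-bipartite, so $\chi_n(G(m,j))\ge\chi(G(m,j))=3$ by Theorem~\ref{thm: nonzero mod n inequalities}. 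For the upper bound I would use Theorem~\ref{thm: nonzero remainder mod n existence criteria and upper bound by n chi}: it suffices to exhibit a (not necessarily proper) closed labelling of order $2$ all of whose closed neighbourhood sums are nonzero mod $n$, since then $\chi_n(G(m,j))\le 2\cdot\chi(G(m,j))=6$. The labelling $\ell(v_i)=0$, $\ell(u_i)=1$ works: the closed neighbourhood of $v_i$ is $\{v_i,v_{i-1},v_{i+1},u_i\}$ with sum $1$, and that of $u_i$ is $\{u_i,v_i,u_{i-j},u_{i+j}\}$ with sum $3$, both nonzero mod $2$ and mod $4$.

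All the arithmetic here is routine; the only places needing care are the bipartiteness dichotomy for $G(m,j)$ (a standard fact, re-derived above) and the bookkeeping that for $n\ge 2$ one has $n\mid 4\iff n\in\{2,4\}$. The one helpful observation is that a single, very simple improper $2$-labelling handles $n=2$ and $n=4$ simultaneously, so that combined with the $|\ell|\,\chi(G)$ bound of Theorem~\ref{thm: nonzero remainder mod n existence criteria and upper bound by n chi} the upper bound is essentially immediate — there is no substantial obstacle.
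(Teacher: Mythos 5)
Your proposal is correct and follows essentially the same route as the paper: Theorem~\ref{thm: regular chi n} for $n\ne 2,4$, the bipartition labeling for $2\mid m$, $2\nmid j$, and an improper two-valued labeling of the exterior/interior vertices combined with Theorem~\ref{thm: nonzero remainder mod n existence criteria and upper bound by n chi} for the remaining cases (the paper puts $1$ on the exterior and $0$ on the interior, you do the reverse — both work). The only addition is that you explicitly justify $\chi(G(m,j))=3$ in the non-bipartite case via Brooks' theorem, a step the paper leaves implicit.
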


\begin{proof}
    If $n\not=2,4$, the result follows from Theorem \ref{thm: regular chi n}. 
    If $n=2,4$ with $2\mid m$ and $2\nmid j$, the $2$-coloring $\ell: V \to \{0,1\}$ that satisfies $\ell(v_i) \equiv i\mod 2$ and $\ell(u_i) \equiv (i+1)\mod 2$ shows $\chi_n(G(m,j))=2$. In all remaining cases, $\chi(G(m,j))=3$, and a labeling of the exterior vertices with $1$ and the interior vertices with $0$ provides a closed coloring with nonzero remainders $\mod n$. Theorem \ref{thm: nonzero remainder mod n existence criteria and upper bound by n chi} finishes the proof.
\end{proof}

\begin{remark}
Regarding the case of $n=2,4$ and either $2\nmid m$ or $2|j$ in Theorem~\ref{thm: chi n gen petersen}, it can be seen that there are examples for which $\chi_n(G(m,j)) \not = \chi(G(m,j))$. As a case in point, with $n=2$, $m=6$, and $j=2$, it can be seen that there are exactly four distinct closed labelings $\ell: V\to \{0,1\}$ with remainder $1\mod 2$: $0$s on the exterior vertices with $1$s on the interior vertices, $1$s on the exterior vertices with $0$s on the interior vertices, or $0$s and $1$s alternating on the exterior and (offset) interior vertices so that one inner triangle is labeled with $1$s and the other with $0$s for two more labelings. From this, it quickly follows that $\chi_2(G(6,2))=5$. However, it is known that $\chi(G(6,2))=3$.
\end{remark}

\section{Basic Results for $\chi_{(n,k)}(G)$}

We will see in Theorem \ref{thm: paths} that $\chi_{(n,k)}(G)$ may not exist. If it exists, though, we certainly have
\[ \chi(G) \leq \chi_{(n,k)}(G).\]
However,
as seen from the following theorem, the case of $k=0$ does not provide a new invariant.
\begin{theorem}\label{thm:5.1}
    Let $n\in\Z^+$. If $\chi(G)$ is finite, then \[\chi_{(n,0)}(G) = \chi(G).\]
\end{theorem}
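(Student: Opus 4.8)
The plan is to show the two inequalities $\chi(G) \le \chi_{(n,0)}(G)$ and $\chi_{(n,0)}(G) \le \chi(G)$ separately. The first is immediate: any proper open coloring with remainder $0 \mod n$ is in particular a proper coloring, so its order is at least $\chi(G)$; this also shows that if a finite-order such coloring exists at all, then $\chi_{(n,0)}(G) \ge \chi(G)$. The content is therefore the reverse inequality, for which it suffices to exhibit a proper open coloring with all open neighborhood sums $\equiv 0 \mod n$ whose order equals $\chi(G)$.

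The natural idea is to take a minimal proper labeling $\ell$ of $G$ — one whose range has size $\chi(G)$ — and rescale it. Since $\chi(G)$ is finite, such an $\ell$ exists; compose it with multiplication by $n$ (or, equivalently, assume the range of $\ell$ already lies in $n\Z$). This rescaled labeling $\ell'$ is still proper and has the same order $\chi(G)$, since $x \mapsto nx$ is injective on $\Z$. Moreover, for every vertex $v$, the open neighborhood sum $\sum_{w\in N(v)} \ell'(w)$ is a sum of multiples of $n$, hence $\equiv 0 \mod n$. Thus $\ell'$ is a proper open coloring with remainder $0 \mod n$ of order $\chi(G)$, giving $\chi_{(n,0)}(G) \le \chi(G)$, and combined with the first inequality we get equality.

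I do not expect a genuine obstacle here; the only thing to be careful about is the degenerate case where $G$ has an isolated vertex (empty open neighborhood), where the empty sum is $0$ and causes no trouble, and the case $n = 1$, where every integer is $\equiv 0 \mod 1$ and the statement reduces to a tautology. One should also note explicitly that this argument parallels the proof of Theorem~\ref{thm: nonzero remainder mod n existence criteria and upper bound by n chi}, but is simpler because we do not need to add a correction term to avoid a forbidden residue — here $0$ is the \emph{target}, so the rescaled proper coloring already works on its own.
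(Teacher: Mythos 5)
Your proposal is correct and matches the paper's proof essentially verbatim: both establish $\chi_{(n,0)}(G)\le\chi(G)$ by taking a minimal proper labeling $\ell$ and replacing it with $\ell'(v)=n\ell(v)$, which remains proper of the same order and makes every open neighborhood sum a multiple of $n$. Your additional remarks on the reverse inequality and the degenerate cases are fine but not needed beyond what the paper already takes for granted.
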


\begin{proof}
    It suffices to provide a coloring that shows $\chi_{(n,0)}(G) \leq \chi(G)$. For this, choose a minimal order proper labeling $\ell: V\to \Z$ of $G$. Define a new labeling $\ell'$ of $G$ by $\ell'(v) = n\ell(v)$ for each $v\in V$. As this is a proper open coloring with remainder $0\mod n$ of $G$, we are done.
\end{proof}

Accordingly, for $\chi_{(n,k)}(G)$, we will often only consider the case of $k\not\equiv 0 \mod n$ for the rest of this paper.

By canceling common summands, we immediately get the following result on symmetric differences.
\begin{lemma}\label{lem: venn}
    If $\ell$ is an open coloring with remainder $k\mod n$ of $G = (V,E)$ and $v,w\in V$, then
    \[ \sum_{u\in N(v)\backslash N(w)} \ell(u)
        \equiv \sum_{u\in N(w)\backslash N(v)} \ell(u)\, \mod n. \]
\end{lemma}

Next is a result on elementary operations. 
\begin{theorem}\label{thm: k mod n units and divisor first relations}
    Let $k,u,v,c,k_1,k_2\in\Z$ and $d,m,n\in\Z^+$. If the right-hand side of each displayed equation below exists, we have the following:
    \begin{itemize}
    \item If $[u]$ is a unit in $\Z_n^\times$, then
        \[ \chi_{(n,uk)}(G) = \chi_{(n,k)}(G). \]
    \item More generally,
        \[ \chi_{(n,vk)}(G) \leq \chi_{(n,k)}(G). \]
    \item If $d$ is a common divisor of $k$ and $n$, then
        \[ \chi_{(n,k)}(G) \leq \chi_{(\frac{n}{d}, \frac{k}{d})}(G).\]
    \item If $m$ divides $n$, then
        \[ \chi_{ (m, k)}(G) \leq \chi_{(n,k)}(G).\]
    \item If $G$ admits a constant open labeling with remainder $c \mod n$, then
        \[ \chi_{(n,k - c)}(G) = \chi_{(n,k)}(G).\]
    \item Finally,
        \[ \chi_{(n,k_1+k_2)}(G) \leq \chi_{(n,k_1)}(G) \chi_{(n,k_2)}(G).\]
    \end{itemize}
\end{theorem}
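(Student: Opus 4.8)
The plan is to combine colorings witnessing the two factors on the right-hand side into a single coloring for $k_1+k_2$, using a rescaling of the sort already seen in the proof of Theorem~\ref{thm: nonzero remainder mod n existence criteria and upper bound by n chi}. Assuming the right-hand side exists, I would fix proper open colorings $\ell_1$ and $\ell_2$ of $G$ with remainders $k_1\mod n$ and $k_2\mod n$ respectively, chosen so that $|\ell_i|=\chi_{(n,k_i)}(G)$. Since these orders are finite, the ranges of $\ell_1$ and $\ell_2$ are bounded sets of integers; fix $M\in\Z^+$ with $\mathrm{range}(\ell_1)\subseteq[-M,M]$, and then pick $a\in\Z^+$ with $an+1>2M$. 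The candidate labeling is
\[ \ell := \ell_1 + (an+1)\,\ell_2. \]

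Three points then need checking. First, the congruence: for each $v\in V$,
\[ \sum_{w\in N(v)}\ell(w) = \sum_{w\in N(v)}\ell_1(w) + (an+1)\sum_{w\in N(v)}\ell_2(w) \equiv k_1 + (an+1)k_2 \equiv k_1+k_2 \mod n, \]
the role of choosing the multiplier $an+1\equiv 1\mod n$ being precisely to leave the $\ell_2$-contribution unchanged modulo $n$. Second, properness: for an edge $vw$, if $\ell_2(v)=\ell_2(w)$ then $\ell_1(v)\ne\ell_1(w)$ because $\ell_1$ is proper, so $\ell(v)\ne\ell(w)$; and if $\ell_2(v)\ne\ell_2(w)$ then the $(an+1)\ell_2$-terms differ by at least $an+1>2M\ge|\ell_1(v)-\ell_1(w)|$, so again $\ell(v)\ne\ell(w)$. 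Third, the order: $\ell(v)$ is a function of the pair $(\ell_1(v),\ell_2(v))$, hence $|\ell|\le|\ell_1|\cdot|\ell_2|$. Combining these, $\chi_{(n,k_1+k_2)}(G)\le|\ell|\le|\ell_1|\,|\ell_2|=\chi_{(n,k_1)}(G)\,\chi_{(n,k_2)}(G)$.

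I expect the only genuine obstacle to be the one the rescaling is designed to overcome: the naive sum $\ell_1+\ell_2$ of two proper colorings need not be proper, and any repair must not perturb the modular constraints. Taking the multiplier to be of the form $an+1$ — congruent to $1$ modulo $n$ so that the neighborhood-sum condition for $\ell_2$ survives verbatim, and as large as we like so that it dominates the spread of $\ell_1$ — handles both demands simultaneously, after which the remaining verifications are routine.
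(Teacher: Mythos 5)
Your construction for the final inequality is correct and is essentially the paper's argument: the paper fixes an injective map $\iota:\Z\times\Z\to\Z$ with $\iota(z_1,z_2)\equiv z_1+z_2\bmod n$ and sets $\ell'(v)=\iota(\ell_1(v),\ell_2(v))$, while you instantiate such a map concretely as $(z_1,z_2)\mapsto z_1+(an+1)z_2$ with $an+1$ large enough to be injective on the (bounded) ranges actually used. The verifications of the congruence, properness, and the order bound $|\ell|\le|\ell_1|\,|\ell_2|$ all go through as you describe. One small caveat: your choice of $M$ presupposes that $\ell_1$ has bounded range, which fails if $\chi_{(n,k_1)}(G)=\infty$; the paper's abstract $\iota$, being injective on all of $\Z\times\Z$, covers that degenerate case as well, though the inequality is then vacuous except for the existence assertion.

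The genuine gap is one of coverage: the theorem makes six assertions and your proposal proves only the last one. Nothing in your argument addresses the unit statement $\chi_{(n,uk)}(G)=\chi_{(n,k)}(G)$, the general multiplier statement $\chi_{(n,vk)}(G)\le\chi_{(n,k)}(G)$, the common-divisor statement $\chi_{(n,k)}(G)\le\chi_{(\frac{n}{d},\frac{k}{d})}(G)$, the restriction statement $\chi_{(m,k)}(G)\le\chi_{(n,k)}(G)$ for $m\mid n$, or the constant-shift statement $\chi_{(n,k-c)}(G)=\chi_{(n,k)}(G)$. Each of these needs its own (short) argument: multiply a witnessing coloring by $u$ and by a representative of $u^{-1}$ in $\Z_n^\times$ for the first; multiply by $v$ for the second (with the case $v\equiv 0$ handled by Theorem~\ref{thm:5.1}); rescale a $\chi_{(\frac{n}{d},\frac{k}{d})}$-coloring by $d$ for the third; observe that a remainder-$k$ coloring mod $n$ is one mod $m$ for the fourth; and add or subtract the constant open labeling for the fifth. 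As written, the proposal would need all of these supplied before it constitutes a proof of the stated theorem.
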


\begin{proof}
    For the fourth statement,
    let $\ell$ be a minimal order proper open coloring with remainder $k\mod n$ of $G$. As this is also a proper open coloring with remainder $k\mod m$ of $G$, we are done.
    For the third statement, let $\ell$ be a minimal order proper open coloring with remainder $\frac{k}{d}\mod \frac{n}{d}$ of $G$. Define a new coloring $\ell'$ of $G$ by $\ell'(v) = d\ell(v)$ for each $v\in V$. As this is a proper open coloring with remainder $k\mod n$ of $G$, we are done.
    The first statement follows by multiplying appropriate open colorings of $G$ by $u$ or its inverse $\mod n$, and the second statement follows similarly, using Theorem \ref{thm:5.1} for $v=0$.
    For the fifth statement, note that adding and subtracting the constant open coloring leads from any minimal order proper open coloring with remainder $k\mod n$ of $G$ to proper open colorings of $G$ with remainders $(k+c)\mod n$ and $(k-c) \mod n$, respectively. For the last statement, let $\ell_1$ and $\ell_2$ be minimal order proper open colorings of $G$ with remainders $k_1\mod n$ and $k_2\mod n$, respectively. Fix any injective map $\iota:\Z\times \Z \to \Z$ such that $\iota(z_1,z_2) \equiv (z_1+z_2) \mod n$ for all $z_1,z_2\in \Z$, and define $\ell'(v) = \iota(\ell_1(v), \ell_2(v))$ for each $v\in V$ for a proper open coloring $\ell'$ with remainder $(k_1+k_2)\mod n$ of $G$.
\end{proof}

The next result can be proven in a way similar to Theorem~\ref{thm: nonzero remainder mod n existence criteria and upper bound by n chi}.

\begin{theorem}\label{thm: remainder k mod n existence criteria and upper bound by n chi}
    Let $k\in\Z$ and $n\in\Z^+$, and let $\chi(G)$ be finite. Then a proper open coloring with remainder $k\mod n$ of $G$ exists if and only if an open coloring with remainder $k\mod n$ of $G$ exists. In that case,
    \[ \chi(G)\le \chi_{(n,k)}(G) \leq n \, \chi(G). \]
    More precisely, if $\ell$ is an open coloring with remainder $k \mod n$ of $G$, then
    \[ \chi(G)\le \chi_{(n,k)}(G) \leq |\ell| \, \chi(G).\]
\end{theorem}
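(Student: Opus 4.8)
The plan is to mirror the proof of Theorem~\ref{thm: nonzero remainder mod n existence criteria and upper bound by n chi} essentially verbatim, adapting it from closed to open neighborhoods. The lower bound $\chi(G) \le \chi_{(n,k)}(G)$ is immediate: any proper open coloring with remainder $k \mod n$ is in particular a proper labeling, so its order is at least $\chi(G)$. The substantive direction is the upper bound, together with the claimed equivalence (existence of \emph{some} open coloring with remainder $k\mod n$ — not necessarily proper — implies existence of a \emph{proper} one).

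For the upper bound, suppose $\ell$ is an open coloring with remainder $k\mod n$ of $G$ (dropping properness), and let $\ell'$ be a minimal-order proper labeling of $G$. First I would translate $\ell$ so that its range lies in $[0,n-1]$; this is harmless since shifting every label by a multiple of $n$ changes each open neighborhood sum by a multiple of $n$ times the degree, hence does not affect the residue $k\mod n$. Wait — more carefully: adding a constant $t$ to all of $\ell$ changes $\sum_{w\in N(v)}\ell(w)$ by $t\cdot\deg(v)$, which need not be a multiple of $n$ unless $t$ is. So I would instead simply replace $\ell$ by $\ell \mod n$ coordinatewise, i.e.\ pick the representative of each $\ell(v)$ in $[0,n-1]$; this changes each label by a multiple of $n$, hence changes each open neighborhood sum by a multiple of $n$, preserving the residue $k\mod n$. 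Then I would scale $\ell'$ so that its range lies in $n\Z$ (multiply all its values by $n$), which keeps it a proper labeling. Now form $\ell'' = \ell + \ell'$. Since $\ell'(w) \in n\Z$ for every $w$, the open neighborhood sum $\sum_{w\in N(v)} \ell''(w) = \sum_{w\in N(v)}\ell(w) + \sum_{w\in N(v)}\ell'(w) \equiv \sum_{w\in N(v)}\ell(w) \equiv k \mod n$, so $\ell''$ is still an open coloring with remainder $k\mod n$. And $\ell''$ is proper: if $v \sim w$ then $\ell'(v) \ne \ell'(w)$, and these differ by a nonzero multiple of $n$, while $\ell(v) - \ell(w) \in (-n, n)$, so $\ell''(v) = \ell(v)+\ell'(v) \ne \ell(w)+\ell'(w) = \ell''(w)$ — the residues mod $n$ are $\ell(v),\ell(w)$ and the quotients are distinct, actually cleanest: $\ell''(v) \equiv \ell(v)$, $\ell''(w)\equiv \ell(w) \mod n$ and if these residues coincide then the difference $\ell''(v)-\ell''(w) = \ell'(v)-\ell'(w) \ne 0$, so in all cases $\ell''(v)\ne\ell''(w)$. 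The order of $\ell''$ is at most $|\ell|\cdot|\ell'| = |\ell|\,\chi(G)$, since $\ell''(v)$ is determined by the pair $(\ell(v) \bmod n, \ell'(v))$. Since $|\ell| \le n$ after reduction (the range sits in $[0,n-1]$), this also yields $\chi_{(n,k)}(G) \le n\,\chi(G)$. The existence equivalence falls out of the same construction: a proper open coloring with remainder $k\mod n$ trivially is an open coloring with remainder $k \mod n$, and conversely the construction above produces a proper one of finite order from any (possibly improper) one, given $\chi(G)$ finite.

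The only place requiring any care — and the "main obstacle," though it is minor — is the properness verification for $\ell''$ on edges $vw$ where $\ell(v) = \ell(w)$ (the improper edges of $\ell$); there one needs $\ell'(v) \ne \ell'(w)$, which holds because $\ell'$ is proper, and since $\ell'(v) - \ell'(w)$ is then a nonzero multiple of $n$ while $\ell(v)-\ell(w) = 0$, the sums differ. On edges where $\ell(v) \ne \ell(w)$ but $\ell(v) \equiv \ell(w) \mod n$ — impossible after reduction, since both lie in $[0,n-1]$ — there is nothing to check. And on edges where $\ell(v) \not\equiv \ell(w) \mod n$, the sums $\ell''(v), \ell''(w)$ already differ mod $n$. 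So properness holds in every case. Since this is, as the paper says, "similar to Theorem~\ref{thm: nonzero remainder mod n existence criteria and upper bound by n chi}," I would present it briskly, referring back to that proof for the shared mechanics and only flagging that the open neighborhood sum $\sum_{w\in N(v)}\ell'(w)$ is a multiple of $n$ because each summand is.

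\begin{proof}
    The lower bound is immediate, since a proper open coloring with remainder $k\mod n$ of $G$ is in particular a proper labeling of $G$. For the rest, note that a proper open coloring with remainder $k\mod n$ is also an open coloring with remainder $k\mod n$, so it remains to produce, from a given open coloring $\ell$ with remainder $k\mod n$ of $G$, a proper one of order at most $|\ell|\,\chi(G)$ (and hence at most $n\,\chi(G)$). After replacing each $\ell(v)$ by its representative in $[0,n-1]$, each open neighborhood sum changes by a multiple of $n$, so $\ell$ remains an open coloring with remainder $k\mod n$, now with $|\ell|\le n$. Let $\ell'$ be a minimal-order proper labeling of $G$, and rescale it so that its range lies in $n\Z$; it is still proper. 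Set $\ell'' = \ell + \ell'$. Since $\sum_{w\in N(v)}\ell'(w)\in n\Z$ for every $v$, we have $\sum_{w\in N(v)}\ell''(w)\equiv\sum_{w\in N(v)}\ell(w)\equiv k\mod n$, so $\ell''$ is an open coloring with remainder $k\mod n$. To see it is proper, let $vw\in E$. If $\ell(v)=\ell(w)$, then $\ell''(v)-\ell''(w)=\ell'(v)-\ell'(w)\neq 0$; if $\ell(v)\neq\ell(w)$, then, as both lie in $[0,n-1]$, we have $\ell(v)\not\equiv\ell(w)\mod n$, so $\ell''(v)\not\equiv\ell''(w)\mod n$. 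In either case $\ell''(v)\neq\ell''(w)$. Finally, $\ell''(v)$ is determined by the pair $\bigl(\ell(v),\ell'(v)\bigr)$, so $|\ell''|\le |\ell|\,|\ell'| = |\ell|\,\chi(G)\le n\,\chi(G)$.
\end{proof}
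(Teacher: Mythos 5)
Your proof is correct and follows exactly the route the paper intends: it adapts the argument of Theorem~\ref{thm: nonzero remainder mod n existence criteria and upper bound by n chi} (reduce $\ell$ coordinatewise into $[0,n-1]$, scale a minimal proper labeling into $n\Z$, and add) to open neighborhood sums, which is precisely what the paper's remark ``can be proven in a way similar to\dots'' asks for. Your extra care about reducing each label modulo $n$ rather than translating by a constant, and the case analysis for properness, are correct refinements of the same construction.
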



For our last result, we turn again to regular graphs.
\begin{theorem}\label{thm: regular chi k n}
    Let $k\in \Z$ and $n,j\in\Z^+$, and let $G=(V,E)$ be a $j$-regular graph. Then
    \[ (j, n) \mid k \implies \chi_{(n,k)}(G) = \chi(G) \]
    and, if $G$ is finite,
    \[ (j, n) \nmid k|V|
        \implies \chi_{(n,k)}(G) \text{ does not exist.} \]
\end{theorem}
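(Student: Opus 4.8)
The plan is to prove the two implications separately, each by producing an explicit coloring (for the first) or by a counting obstruction (for the second).

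\medskip

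\textbf{First implication.} Suppose $(j,n)\mid k$. Since $G$ is $j$-regular, any constant labeling $\ell\equiv c$ produces open neighborhood sums equal to $jc$ at every vertex, so $G$ admits a constant open labeling with remainder $jc\bmod n$ for every $c\in\Z$. As $c$ ranges over $\Z$, the value $jc\bmod n$ ranges over the subgroup $(j,n)\Z_n\le\Z_n$. Since $(j,n)\mid k$, we may choose $c$ with $jc\equiv k\bmod n$. First I would invoke Theorem~\ref{thm:5.1} to get $\chi_{(n,0)}(G)=\chi(G)$, and then apply the fifth bullet of Theorem~\ref{thm: k mod n units and divisor first relations} with this constant open labeling of remainder $c'\bmod n$ where $c'=jc\equiv k$: this shifts $\chi_{(n,0)}(G)$ to $\chi_{(n,k)}(G)$, giving $\chi_{(n,k)}(G)=\chi_{(n,0)}(G)=\chi(G)$. (Alternatively, and perhaps more cleanly, one can directly take a minimal proper labeling $\ell'$ of $G$ with range in $n\Z$ and add the constant $c$ to it; the resulting labeling is proper, and each open neighborhood sum is $nj\cdot(\text{something})+jc\equiv k\bmod n$. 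This avoids citing the fifth bullet and is self-contained, so that is the route I would actually write.) The lower bound $\chi(G)\le\chi_{(n,k)}(G)$ is immediate since any open coloring is in particular proper.

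\medskip

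\textbf{Second implication.} Now suppose $G$ is finite and $(j,n)\nmid k|V|$, and suppose toward a contradiction that a proper open coloring $\ell$ with remainder $k\bmod n$ exists. The key observation is a double-counting identity: summing the open neighborhood sums over all vertices,
\[
\sum_{v\in V}\ \sum_{u\in N(v)}\ell(u)\;=\;\sum_{u\in V} \deg(u)\,\ell(u)\;=\;j\sum_{u\in V}\ell(u),
\]
because each $u$ is counted once for each of its $\deg(u)=j$ neighbors. On the other hand, each inner sum is $\equiv k\bmod n$ and there are $|V|$ of them, so the left side is $\equiv k|V|\bmod n$. Hence $k|V|\equiv j\!\sum_u\ell(u)\bmod n$, which means $k|V|$ lies in the subgroup $j\Z_n=(j,n)\Z_n$ of $\Z_n$, i.e. $(j,n)\mid k|V|$ — contradicting the hypothesis. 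Therefore no such coloring exists, and $\chi_{(n,k)}(G)$ does not exist.

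\medskip

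\textbf{Main obstacle.} There is no serious obstacle here; both halves are short. The only point requiring a little care is making sure the subgroup of $\Z_n$ generated by $j$ (equivalently by $[j]$) is exactly $(j,n)\Z_n$, i.e. that $jc\equiv k\bmod n$ is solvable in $c$ precisely when $(j,n)\mid k$ — this is the standard linear-congruence fact and I would state it in one line. I would also double-check the degenerate cases ($j$ could in principle make $G$ disconnected or $n=1$), but the argument above is insensitive to these.
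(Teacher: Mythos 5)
Your proposal is correct and takes essentially the same route as the paper: for the first implication, a constant labeling $c$ solving $jc\equiv k \bmod n$ added to a minimal proper labeling with range in $n\Z$ (which is exactly what the paper's cited Theorem~\ref{thm: remainder k mod n existence criteria and upper bound by n chi} does internally), and for the second, the identical double-counting of open neighborhood sums giving $k|V|\equiv j\sum_u\ell(u)\bmod n$.
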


\begin{proof}
    If $(j, n) \mid k$, then $jx \equiv k \mod n$ can be solved. In that case, a constant labeling of $G$ by $x$ is an open coloring with remainder $k \mod n$. Theorem \ref{thm: remainder k mod n existence criteria and upper bound by n chi} finishes the proof.

    Now suppose there is an open labeling $\ell$ of $G$ with remainder $k \mod n$, but $(j, n) \nmid k|V|$.
    Let
    \[ S = \sum_{v\in V} \sum_{u\in N(v)} \ell(u). \]
    Then $S \equiv k|V|\mod n$ as $\sum_{u\in N(v)} \ell(u) \equiv k\mod n$ for all $v\in V$.
    But each $v\in V$ is in exactly $j$ open neighborhoods. Therefore,
    $S = j\sum_{v\in V} \ell(v)$. As a result, the equation $jx \equiv k|V| \mod n$ can be solved. As this happens if and only if $(j,n)\mid k|V|$, we are done.
\end{proof}

\section{Examples for $\chi_{(n,k)}(G)$}

\begin{theorem}\label{thm: paths}
    Let $k\in\Z$ and $n,m\in \Z^+$ with $n,m\geq 2$ and $k\not\equiv 0 \mod n$. For paths,
    $\chi_{(n,k)}(P_2)=2$,
    \begin{equation*}
        \chi_{(n,k)}(P_3) =
        \begin{cases}
            2, & \text{if } (2,n) \mid k,\\
            3, & \text{otherwise},
        \end{cases}
    \end{equation*}
    and $\chi_{(n,k)}(P_4)=3$.
    For $m\geq 5$,
    \begin{equation*}
        \chi_{(n,k)}(P_m) =
        \begin{cases}
            3, & \text{if } m\equiv 3 \mod 4 \text{ and } (2,n) \mid k,\\
            \text{does not exist}, & \text{if } m\equiv 1 \mod 4, \\
            4, & \text{otherwise}.
        \end{cases}
    \end{equation*}
\end{theorem}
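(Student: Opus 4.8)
The plan is to analyze open neighborhood sums along a path $P_m$ with vertices $v_1,\dots,v_m$ directly, exploiting the very local nature of the constraint: for the endpoints the open neighborhood sum is a single label, while for an interior vertex $v_i$ it is $\ell(v_{i-1})+\ell(v_{i+1})$. First I would record the small cases $P_2,P_3,P_4$ by hand: $P_2$ forces the two labels to be nonzero-distinct and one checks $(0,1)\mapsto\{1,0\}$... wait, both sums must be $\equiv k$, so for $P_2$ we need $\ell(v_1)\equiv\ell(v_2)\equiv k$, and since the labeling is proper with two equal residues we need two distinct integers in that residue class, giving $\chi_{(n,k)}(P_2)=2$. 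For $P_3$ the endpoint conditions force $\ell(v_1)\equiv\ell(v_3)\equiv k$ and the middle condition forces $\ell(v_1)+\ell(v_3)\equiv 2k\equiv k$, i.e. $k\equiv 0$ — so actually the analysis must instead be: middle vertex has open sum $\ell(v_1)+\ell(v_3)\equiv k$ while endpoints give $\ell(v_2)\equiv k$ each, so we need $\ell(v_1)+\ell(v_3)\equiv k$ with $\ell(v_1),\ell(v_3)\neq\ell(v_2)$; solvability of $a+b\equiv k$ with $a,b$ free is automatic, and whether $2$ colors suffice reduces to whether we can take $\ell(v_1)=\ell(v_3)$, i.e. whether $2a\equiv k$ is solvable, which is exactly the condition $(2,n)\mid k$. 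This is the template for the whole theorem, so I would isolate the observation ``$2x\equiv k\bmod n$ is solvable iff $(2,n)\mid k$'' as the recurring arithmetic fact.

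The heart of the argument is the case $m\ge 5$, where I would set up a recursion on residues. Writing $a_i=[\ell(v_i)]\in\Z_n$, the interior constraints say $a_{i+1}\equiv k-a_{i-1}$ for $2\le i\le m-1$, which means the even-indexed residues and the odd-indexed residues each form their own arithmetic progression with common difference... more precisely $a_{i+2}\equiv a_i$ whenever both indices lie strictly between $1$ and $m$, so all residues at interior odd positions agree, all at interior even positions agree, and the endpoint constraints ($a_1\equiv k$, $a_m\equiv k$) together with the constraint linking $a_1$ to $a_3$ and $a_m$ to $a_{m-2}$ pin everything down. Carefully propagating these congruences is where the four residue classes of $m\bmod 4$ separate: when $m\equiv 1\bmod 4$ one finds two forced congruences on the same residue class that contradict $k\not\equiv 0$, giving non-existence; when $m\equiv 3\bmod 4$ the forced residues collapse to essentially two values and one checks whether they can coincide, which again is governed by $(2,n)\mid k$; and when $m$ is even the residues are forced but consistent, so existence always holds. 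I would present this as: ``the residue pattern of any open $k$-coloring of $P_m$ is determined up to the choice of $a_2$, and compatibility with both endpoints holds iff [condition on $m$ and $k$].''

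Once the residue structure is fixed, the color-count is the easy part, handled uniformly via Theorem~\ref{thm: remainder k mod n existence criteria and upper bound by n chi}: a path has $\chi(P_m)=2$, so whenever a coloring exists we get $\chi_{(n,k)}(P_m)\le 2|\ell|$ for any witnessing residue labeling, and since the residue labeling uses at most $3$ distinct residues (the two interior-position residues plus possibly $k$ itself at the ends, but the ends already share a residue with something when $m$ is odd), one builds an explicit integer labeling realizing the claimed value — $3$ when a 2-residue pattern can be lifted with a proper 3-coloring overlay but 2 integers don't suffice for properness, $4$ otherwise. The lower bounds $\chi_{(n,k)}(P_m)\ge 3$ (resp. $\ge 4$) come from the residue analysis itself: if the forced residue pattern on some three consecutive vertices uses three distinct residues then $3$ colors are needed, and a short parity/adjacency argument shows that when the pattern uses only two residues but those two residues must appear on adjacent vertices in a long alternating stretch, $2$ integers still fail properness whenever $m\ge 5$... actually the cleanest route is to show that 3 (resp. 4) consecutive vertices are forced to receive 3 (resp. 4) pairwise-distinct \emph{integers}, which forces the bound. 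I expect the main obstacle to be the bookkeeping in the $m\equiv 3\bmod 4$ subcase: there one must simultaneously track that the residue pattern is consistent, determine exactly when it uses $2$ versus $3$ distinct residues, and verify the matching lower bound of $3$ — the existence statement and the sharpness of ``$3$'' are both live there, whereas in every other subcase one of the two is automatic.
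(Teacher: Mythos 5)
Your overall strategy is the same as the paper's: the open-sum constraints on a path force a periodic residue pattern determined by the label of the first vertex, the constraint coming from the last vertex splits the analysis according to $m \bmod 4$, and the color count is then read off from how many residue classes appear and which of them contain adjacent vertices. However, the two load-bearing claims in your setup are both misstated, and the case analysis you describe cannot be derived from them. First, the interior constraint $a_{i-1}+a_{i+1}\equiv k$ gives $a_{i+2}\equiv k-a_i$, \emph{not} $a_{i+2}\equiv a_i$; the sequence has period $4$ (since $a_{i+4}\equiv k-(k-a_i)\equiv a_i$), not period $2$, and the residues at interior even positions do \emph{not} all agree (e.g.\ $a_2\equiv k$ forces $a_4\equiv 0\not\equiv k$). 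Second, the constraints at the endpoints are $a_2\equiv k$ and $a_{m-1}\equiv k$ (the open neighborhood of $v_1$ is $\{v_2\}$), not $a_1\equiv k$ and $a_m\equiv k$. From your stated premises one would get a $\bmod\ 2$ dichotomy and never the nonexistence for $m\equiv 1\bmod 4$, so the conclusions you then assert (the $\bmod\ 4$ trichotomy, the role of $(2,n)\mid k$) are not consequences of what you wrote. With the corrections, the forced pattern is $(a,k,k-a,0,a,k,k-a,0,\dots)$ with $a$ the residue of the first label, the condition $a_{m-1}\equiv k$ reads off the four cases of $m\bmod 4$ (forcing $a\equiv 0$, no condition, $a\equiv k$, or the contradiction $0\equiv k$, respectively), and the rest of your plan goes through; this is exactly the paper's proof.

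Two smaller points. Your generic upper bound $\chi_{(n,k)}(P_m)\le 2|\ell|$ from Theorem~\ref{thm: remainder k mod n existence criteria and upper bound by n chi} is too weak in the case $m\equiv 3\bmod 4$ with $(2,n)\mid k$ (it gives $6$, not $3$), so, as you anticipate, explicit lifts of the residue pattern by adding multiples of $n$ are needed for the upper bounds. For the lower bounds, your ``cleanest route'' is sound and worth carrying out: in every case where the answer is $4$, one can locate four consecutive vertices whose forced residues consist of two adjacent pairs from two distinct residue classes (or four distinct residues), so their integer labels are pairwise distinct; the bound $\ge 3$ in the remaining cases follows similarly from three consecutive vertices.
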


\begin{proof} Beginning with the first vertex, any open labeling with remainder $k\mod n$ of $P_m$ forces the labels to be congruent $\mod n$ to a repeating pattern of $(a,k,k-a,0,\ldots)$, where the variable $a\in\Z$ denotes the label of the first vertex.
    If $m\equiv 0\mod 4$, the neighborhood sum of the final vertex forces $(k-a)\equiv k \mod n$, hence $a\equiv 0 \mod n$ for a repeating pattern of $(0,k,k,0,\ldots)$. By adding $n$ where necessary, this can be made minimally proper with $3$ colors when $m=4$ and, otherwise, requires $4$ colors.
    If $m\equiv 2\mod 4$, the final vertex forces $a\equiv k \mod n$ for a repeating pattern of $(k,k,0,0,\ldots)$. By adding $n$ where necessary, this can be made minimally proper with $2$ colors when $m=2$ and, otherwise, requires $4$ colors.

    If $m\equiv 3\mod 4$, the neighborhood sum of the final vertex adds no additional constraints on $a$. In this case, $a$ may be chosen such that $a\equiv (k-a) \mod n$ if and only if $(2,n)\mid k$.
    For such a choice of $a$, the repeating pattern is $(a,k,a,0,\ldots)$. By adding $n$ where necessary, this can be made minimally proper with $2$ colors when $m=3$ and, otherwise, requires $3$ colors.
    If $(2,n)\nmid k$, then the repeating pattern is $(a,k,k-a,0,\ldots)$ for some $a\in \Z$, though $a\not\equiv (k-a) \mod n$.
    By adding $n$ where necessary, this can be made minimally proper with $3$ colors when $m=3$ and, otherwise, requires $4$ colors.

    If $m\equiv 1\mod 4$, the neighborhood sum of the final vertex forces $0\equiv k \mod n$, which violates $k \not\equiv 0\mod n$.
\end{proof}

\begin{question}
Theorem \ref{thm: paths} shows that $\chi_{(n,k)}(P_{4i+1})$ does not exist for any $n,k$, except for $k\equiv 0\mod n$. In fact, there are many graphs that share this property. By analogous arguments as above, it is straightforward to show that $K_1$, the Cartesian products $P_{4i + 1} \scalebox{.8}{$\square$} P_{4j + 1}$, and the graphs in Figure \ref{fig: nonexist examples} all share this trait. It would be interesting to find conditions on a graph $G$ that are equivalent to $\chi_{(n,k)}(G)$ existing for no $n,k$ with $k\not\equiv 0\mod n$.

    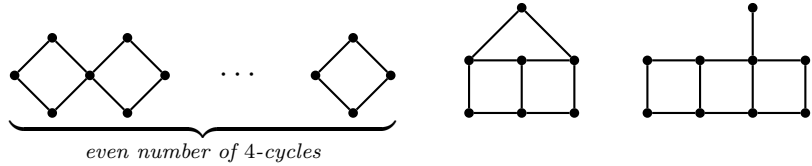
\begin{figure}[H]
    \begin{center}
\[
\underbrace{
\begin{tikzpicture}[thick,scale=.5]
          \node[circle, draw, fill, inner sep=1pt] (a) at (0,0) {};
          \node[circle, draw, fill, inner sep=1pt] (b) at (1,1) {};
          \node[circle, draw, fill, inner sep=1pt] (c) at (2,0) {};
          \node[circle, draw, fill, inner sep=1pt] (d) at (1,-1) {};
          \draw (a) -- (b) -- (c) -- (d) -- (a);
          \node[circle, draw, fill, inner sep=1pt] (B) at (3,1) {};
          \node[circle, draw, fill, inner sep=1pt] (C) at (4,0) {};
          \node[circle, draw, fill, inner sep=1pt] (D) at (3,-1) {};
          \draw (c) -- (B) -- (C) -- (D) -- (c);
          \node at (6,0) {$\cdots$};
          \node[circle, draw, fill, inner sep=1pt] (w) at (8,0) {};
          \node[circle, draw, fill, inner sep=1pt] (x) at (9,1) {};
          \node[circle, draw, fill, inner sep=1pt] (y) at (10,0) {};
          \node[circle, draw, fill, inner sep=1pt] (z) at (9,-1) {};
          \draw (w) -- (x) -- (y) -- (z) -- (w);
        \end{tikzpicture}
        }_{\text{even number of $4$-cycles}}
        \qquad
        \begin{tikzpicture}[thick,scale=.7]
          \node[circle, draw, fill, inner sep=1pt] (a) at (0,0) {};
          \node[circle, draw, fill, inner sep=1pt] (b) at (1,0) {};
          \node[circle, draw, fill, inner sep=1pt] (c) at (1,1) {};
          \node[circle, draw, fill, inner sep=1pt] (d) at (0,1) {};
          \node[circle, draw, fill, inner sep=1pt] (e) at (1,2) {};
          \node[circle, draw, fill, inner sep=1pt] (f) at (2,0) {};
          \node[circle, draw, fill, inner sep=1pt] (g) at (2,1) {};
          \draw (a) -- (b) -- (c) -- (d) -- (a);
          \draw (c) -- (g) -- (f) -- (b);
          \draw (g) -- (e) -- (d);
        \end{tikzpicture}
        \qquad
        \begin{tikzpicture}[thick,scale=.7]
          \node[circle, draw, fill, inner sep=1pt] (a) at (0,0) {};
          \node[circle, draw, fill, inner sep=1pt] (b) at (1,0) {};
          \node[circle, draw, fill, inner sep=1pt] (c) at (1,1) {};
          \node[circle, draw, fill, inner sep=1pt] (d) at (0,1) {};
          \node[circle, draw, fill, inner sep=1pt] (e) at (2,0) {};
          \node[circle, draw, fill, inner sep=1pt] (f) at (2,1) {};
          \node[circle, draw, fill, inner sep=1pt] (g) at (2,2) {};
          \node[circle, draw, fill, inner sep=1pt] (h) at (3,0) {};
          \node[circle, draw, fill, inner sep=1pt] (i) at (3,1) {};
          \draw (a) -- (b) -- (c) -- (d) -- (a);
          \draw (b) -- (e) -- (f) -- (c);
          \draw (e) -- (h) -- (i) -- (f) -- (g);
        \end{tikzpicture}
    \]
    \caption{Examples of graphs $G$, for which $\chi_{(n,k)}(G)$ exists for no $n,k$ with $k\not\equiv 0\mod n$.}
    \label{fig: nonexist examples}
    \end{center}
    \end{figure}
\end{question}

\begin{theorem} \label{Complete Graphs - Open Neighborhoods}
Let $k \in \Z$ and $n,m\in\Z^+$ with $n\geq 2$, $k\not\equiv 0\mod n$, and $m\geq 2$. For the complete graph,
    $$\chi_{(n,k)}(K_m) =
    \begin{cases}
        m, &\text{if } (m-1,n)\mid k, \\
        \text{does not exist}, &\text{otherwise.}
    \end{cases}
    $$
\end{theorem}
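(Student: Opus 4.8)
The plan is to exploit the special structure of $K_m$: since $N(v) = V\setminus\{v\}$ for every vertex, the open neighborhood sum at $v$ is simply $T - \ell(v)$, where $T = \sum_{w\in V}\ell(w)$. So an open coloring with remainder $k\bmod n$ is exactly a $\Z$-labeling with $T - \ell(v) \equiv k \bmod n$ for all $v$, i.e.\ $\ell(v) \equiv T - k \bmod n$ for all $v$. First I would record the immediate consequence that all labels are congruent to a common residue $c := [T-k]$ modulo $n$; summing over $V$ gives $T \equiv mc \bmod n$, and substituting back into $c \equiv T - k$ yields the single necessary condition $(m-1)c \equiv k \bmod n$, which is solvable for $c\in\Z$ if and only if $(m-1,n)\mid k$.

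For the nonexistence half, this already finishes the argument: if $(m-1,n)\nmid k$, then no $\Z$-labeling of $K_m$ (proper or not) can satisfy the open neighborhood congruences, so $\chi_{(n,k)}(K_m)$ does not exist. I would explicitly note here that Theorem~\ref{thm: regular chi k n} alone is not quite enough for this direction, since $K_m$ is $(m-1)$-regular and that theorem only produces nonexistence when $(m-1,n)\nmid k|V| = km$; the stronger conclusion needs the direct computation above.

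For the value in the case $(m-1,n)\mid k$, I would first use $\chi(K_m)=m \le \chi_{(n,k)}(K_m)$ (or Theorem~\ref{thm: remainder k mod n existence criteria and upper bound by n chi}) for the lower bound. For the matching upper bound, pick $c$ with $(m-1)c\equiv k\bmod n$ and label the vertices of $K_m$ with the $m$ distinct integers $c, c+n, c+2n, \ldots, c+(m-1)n$; these are pairwise distinct so the labeling is proper, and all labels are $\equiv c\bmod n$, so every open neighborhood sum is $\equiv (m-1)c \equiv k \bmod n$. This is a proper open coloring with remainder $k\bmod n$ of order $m$, giving $\chi_{(n,k)}(K_m) \le m$, hence equality.

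I do not expect a genuine obstacle here; the only point requiring care is the one flagged above — that the ``does not exist'' clause must be derived from the direct structural computation on $K_m$ rather than quoted from the general regular-graph theorem, because the regular-graph criterion tests divisibility of $k|V|$, not of $k$.
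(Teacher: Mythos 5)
Your proposal is correct and follows essentially the same route as the paper: both arguments first show that all labels must be congruent modulo $n$ (the paper via Lemma~\ref{lem: venn}, you via the equivalent direct computation with the total sum $T$), deduce the necessary condition $(m-1)c\equiv k\bmod n$ and hence $(m-1,n)\mid k$, and then realize the bound with the identical labeling $c, c+n,\dots,c+(m-1)n$. Your side remark that the nonexistence clause cannot be quoted from Theorem~\ref{thm: regular chi k n} is a fair observation, and indeed the paper derives it directly as you do.
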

\begin{proof}
    Suppose $\ell$ is a proper open coloring with remainder $k\mod{n}$ of $K_m$. Then for every $v,w \in V$, Lemma \ref{lem: venn} requires $\ell(v) \equiv \ell(w)\mod n$.
    In turn, the open neighborhood condition requires $(m-1)\ell(v) \equiv k \mod n$. Thus, $(m-1,n)\mid k$.

    Conversely, if $(m-1,n)\mid k$, then there is some $\alpha\in\Z$ with $(m-1)\alpha \equiv k \mod n$. A labeling of the vertices by $\alpha,\alpha+n,\dots,\alpha+(m-1)n$ gives a proper open coloring of order $\chi(K_m)=m$ with remainder $k\mod{n}$.
\end{proof}

\begin{theorem}
    Let $k\in \Z$  and $n,m\in\Z^+$ with $n\geq 2$, $k\not\equiv 0\mod n$, and $m \geq 2$. For the star,
    $$\chi_{(n,k)}(S_m) =
    \begin{cases}
        2, &\text{if } (m,n)\mid k, \\
        3, &\text{otherwise}.
    \end{cases}
    $$
\end{theorem}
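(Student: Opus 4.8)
The plan is to use the very rigid structure of $S_m$, whose vertices are a central vertex $c$ adjacent to $m$ leaves $v_1,\dots,v_m$, with the leaves mutually non-adjacent. For a $\Z$-labeling $\ell$, the open neighborhood condition at each leaf $v_i$ is just $\ell(c)\equiv k\mod n$, and the condition at $c$ is $\sum_{i=1}^m \ell(v_i)\equiv k\mod n$. Since $\chi(S_m)=2$, the general bound $\chi(G)\le\chi_{(n,k)}(G)$ gives $\chi_{(n,k)}(S_m)\ge 2$, so the task is to decide exactly when order $2$ is attainable and, otherwise, to produce a coloring of order $3$.

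For the case $(m,n)\mid k$, I would choose $b\in\Z$ with $mb\equiv k\mod n$ --- solvable precisely because of this divisibility --- color every leaf by $b$, and color $c$ by any representative $a\equiv k\mod n$ with $a\ne b$ (infinitely many are available; e.g.\ take $a=k$, or $a=k+n$ if $k=b$). Both congruences hold, the labeling is proper since the leaves are independent and each differs from $\ell(c)$, and it has order $2$; hence $\chi_{(n,k)}(S_m)=2$.

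For the case $(m,n)\nmid k$, I would first show order $2$ is impossible: in any proper $2$-labeling of $S_m$ every leaf is adjacent to $c$, so all $m$ leaves receive the one color distinct from $\ell(c)$; the condition at $c$ then forces $m\,\ell(v_1)\equiv k\mod n$, contradicting $(m,n)\nmid k$. Thus $\chi_{(n,k)}(S_m)\ge 3$. For the matching upper bound I would label $c$ by $k+n$, one leaf by $k$, and the remaining $m-1\ge 1$ leaves by $0$. Because $k\not\equiv 0\mod n$ we have $k+n\not\equiv 0\mod n$, so $k+n$, $k$, $0$ are three distinct integers, all leaf labels differ from $\ell(c)=k+n$, the leaf sum is $k\equiv k\mod n$, and $\ell(c)\equiv k\mod n$. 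This is a proper open coloring with remainder $k\mod n$ of order $3$, so $\chi_{(n,k)}(S_m)=3$.

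The argument is short, and the only points needing attention are bookkeeping: that the order-$2$ construction uses two genuinely distinct integers with the prescribed residues, that the non-existence of an order-$2$ coloring in the second case is derived from the forced uniformity of the leaf color (not from any abstract existence statement), and that $m\ge 2$ is what makes the label $0$ actually appear in the order-$3$ construction. I do not expect any real obstacle.
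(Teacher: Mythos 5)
Your proof is correct and follows essentially the same route as the paper: the central vertex is forced to carry a label congruent to $k\bmod n$, a $2$-coloring exists precisely when $m\alpha\equiv k\bmod n$ is solvable, i.e.\ when $(m,n)\mid k$, and otherwise an explicit order-$3$ coloring (yours merely swaps the roles of $k$ and $k+n$ between the center and the distinguished leaf relative to the paper's) settles the remaining case.
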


\begin{proof}
    By definition, any open coloring with remainder $k\mod n$ of $S_m$ must label the central vertex with a label congruent to $k \mod n$.
    Observe that $\chi_{(n,k)}(S_m)=2$ if and only if there exists some $\alpha\in \Z$ such that $m\alpha \equiv k \mod n$. This is equivalent to $(m,n)\mid k$.

    Otherwise, a proper open $3$-coloring with remainder $k\mod n$ of $S_m$ may be obtained as follows: Color the central vertex with $k$. Then color exactly one circumferential vertex with $k+n$ and the rest with~$0$.
\end{proof}

\begin{theorem}
Let $k\in \Z$ and $n, i,j \in \Z^+$ with $n\geq 2$ and $k\not\equiv 0\mod n$. For the complete bipartite graph,
$$
\chi_{(n,k)}(K_{i,j}) =
\begin{cases}
2, &\text{if } (i,n) \mid k \text{ and } (j,n) \mid k, \\
3, &\text{if } (i,n) \mid k \text{ or } (j,n) \mid k, \text{ but not both}, \\
4, &\text{otherwise.}
\end{cases}
$$
\end{theorem}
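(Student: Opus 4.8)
The plan is to exploit the rigid structure of $K_{i,j}$. Let $V_1,V_2$ be the two parts, with $|V_1|=i$ and $|V_2|=j$. Since the open neighborhood of every vertex of $V_1$ is all of $V_2$ and vice versa, a $\Z$-labeling $\ell$ is an open coloring with remainder $k\mod n$ if and only if the two part-sums $S_1:=\sum_{v\in V_1}\ell(v)$ and $S_2:=\sum_{v\in V_2}\ell(v)$ both satisfy $S_r\equiv k\pmod n$; and $\ell$ is proper if and only if $\ell(V_1)\cap\ell(V_2)=\emptyset$, in which case its order is $|\ell(V_1)|+|\ell(V_2)|\ge 2=\chi(K_{i,j})$. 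So the whole problem reduces to: how few colors, split between the two parts, suffice to make each part-sum $\equiv k\pmod n$?

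First I would record two elementary facts. (a) If a part with $t$ vertices receives a single color $a$, its part-sum is $ta$, and $ta\equiv k\pmod n$ is solvable in $a$ exactly when $(t,n)\mid k$. (b) If a part with $t\ge 2$ vertices receives at least two distinct colors, then \emph{any} target residue is attainable: assign one vertex the color $c$ and the remaining $t-1$ vertices the color $d$, choose $d$ freely, and solve $c\equiv k-(t-1)d\pmod n$. I would also note that $(i,n)\nmid k$ forces $i\ge 2$ (since $(1,n)=1$ divides everything), and likewise for $j$; thus fact (b) is available precisely in the situations where fact (a) fails.

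The three cases then fall out. If $(i,n)\mid k$ and $(j,n)\mid k$, pick $a$ with $ia\equiv k$ and $b$ with $jb\equiv k$; as each congruence has infinitely many integer solutions we may take $a\ne b$, yielding a proper coloring of order $2$. If exactly one divisibility holds, say $(i,n)\mid k$ but $(j,n)\nmid k$, then order $2$ is impossible, because order $2$ forces both parts monochromatic and hence both divisibilities; instead color $V_1$ with a suitable single $a$ and $V_2$ with two colors realizing residue $k$ (possible as $j\ge 2$), giving order $3$. If neither divisibility holds, then any coloring of order $\le 3$ has a monochromatic part and so forces one of the two divisibilities, a contradiction, while order $4$ is realized by coloring each part with two colors realizing residue $k$ (using $i,j\ge 2$); hence the value is exactly $4$.

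The only point requiring genuine care — and the step I would flag as the \textbf{main bookkeeping obstacle} — is the lifting of color values: each congruence pins down a color only modulo a divisor of $n$, so there are infinitely many integer representatives, and in every case I must choose representatives so that $\ell(V_1)$ and $\ell(V_2)$ stay disjoint \emph{without} increasing the order. I would handle this once, cleanly, by choosing the (at most four) colors one at a time, each from its prescribed residue class and avoiding the finitely many colors already selected, rather than re-deriving it in each case.
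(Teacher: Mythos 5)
Your proposal is correct and follows essentially the same route as the paper's proof: reduce everything to the two part-sums, characterize order $2$ by both parts being monochromatic (hence both gcd conditions), build an explicit $3$-coloring in the mixed case, and in the remaining case observe that each part needs at least two colors and the two color sets must be disjoint. Your explicit handling of the choice of integer representatives to keep $\ell(V_1)$ and $\ell(V_2)$ disjoint is a point the paper leaves implicit, but it is not a different approach.
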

\begin{proof}
    Let $V_1$ and $V_2$ with $|V_1|=i$ and $|V_2|=j$ denote the vertex sets belonging to the two parts of $K_{i,j}$.
    Labeling exactly one vertex of $V_1$ and exactly one vertex of $V_2$ with $k$ and the rest with $0$ gives an open coloring with remainder $k\mod n$. Theorem \ref{thm: remainder k mod n existence criteria and upper bound by n chi} shows that $2\leq \chi_{(n,k)}(K_{i,j}) \leq 4$.

    We have $\chi_{(n,k)}(K_{i,j})=2$ if and only if all vertices of $V_1$ can be labeled with the same label $\alpha$ and all vertices of $V_2$ with the same label $\beta$ for distinct $\alpha,\beta \in \Z$ with $i\alpha \equiv k \mod n$ and $j\beta \equiv k \mod n$. This is possible if and only if $(i,n), (j,n) \mid k$.

    If $(i,n) \mid k$, but $(j,n) \nmid k$, choose some $k\ne \alpha \in \Z$ such that $i\alpha\equiv k \mod n$. Then a proper open $3$-coloring with remainder $k\mod n$ of $K_{i,j}$ is obtainable by labeling one vertex of $V_2$ with $k$ and the rest with $0$ and all the vertices of $V_1$ with $\alpha$. The case $(i,n) \nmid k$, but $(j,n) \mid k$, is done similarly.

    If $(i,n), (j,n) \nmid k$, then labeling $V_1$ needs at least two colors as does labeling $V_2$. But as these colors must be mutually distinct to get a proper coloring, we are done.
\end{proof}

\begin{lemma}
Let $k \in \Z$ and $n \in\Z^+$ with $n\geq 2$ and $k\not\equiv 0\mod n$. Let $R_4$ denote the regular, infinite square tiling of the plane. Then
$2 \leq \chi_{(n,k)}(R_4) \leq 4$,
and $\chi_{(n,k)}(R_4) = 2$ if and only if $(4,n)\mid k$.
\label{Square Tiling Bounds - Open Neighborhoods}
\end{lemma}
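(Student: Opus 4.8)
The plan is to treat the three assertions separately: the lower bound $\chi_{(n,k)}(R_4)\ge 2$, the upper bound $\chi_{(n,k)}(R_4)\le 4$ (which simultaneously settles existence), and the characterization of equality with $2$.

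For the lower bound I will just note that $R_4$ is bipartite with at least one edge, so $\chi(R_4)=2$, and apply the general inequality $\chi(G)\le\chi_{(n,k)}(G)$ recorded before Theorem~\ref{thm:5.1}. For the ``$=2$'' characterization, one direction is immediate: $R_4$ is $4$-regular, so if $(4,n)\mid k$ then Theorem~\ref{thm: regular chi k n} gives $\chi_{(n,k)}(R_4)=\chi(R_4)=2$. For the converse I will use rigidity of $2$-colorings: since $R_4$ is connected and bipartite, its only proper $2$-colorings are the bipartition coloring and its swap, and there every vertex has all four of its neighbors in the opposite class. Hence a proper open coloring of order $2$ with remainder $k\bmod n$ is constant, say with value $b$, on one part, and reading the open neighborhood condition at any vertex of the other part yields $4b\equiv k\bmod n$, so $(4,n)\mid k$.

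The substantive step is the upper bound $\chi_{(n,k)}(R_4)\le 4$. By Theorem~\ref{thm: remainder k mod n existence criteria and upper bound by n chi} together with $\chi(R_4)=2$, it suffices to exhibit an open coloring (not necessarily proper) of $R_4$ of order at most $2$ with remainder $k\bmod n$. Identifying $V(R_4)$ with $\Z^2$, I will take
\[ S=\bigl\{(x,y)\in\Z^2 : x+3y\equiv 0 \text{ or } 1 \pmod 8\bigr\} \]
and claim that $|N(v)\cap S|=1$ for every vertex $v$. To verify this, observe that the four neighbors of $(x,y)$ have ``$x+3y$''-values $s-3,\,s-1,\,s+1,\,s+3$ where $s=x+3y$, and a short case analysis over the eight residues of $s$ modulo $8$ shows that exactly one of these lies in $\{0,1\}$ modulo $8$. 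Granting the claim, fix the representative $\bar k\in\{1,\dots,n-1\}$ of $k$ and set $\ell(v)=\bar k$ for $v\in S$ and $\ell(v)=0$ otherwise; then $|\ell|=2$, and since each vertex has exactly one neighbor in $S$ and three outside $S$, every open neighborhood sum equals $\bar k\equiv k\bmod n$. Theorem~\ref{thm: remainder k mod n existence criteria and upper bound by n chi} then gives $\chi_{(n,k)}(R_4)\le|\ell|\,\chi(R_4)=4$ and, in particular, that $\chi_{(n,k)}(R_4)$ exists.

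The main obstacle is the construction and verification of the set $S$: such a set (an ``efficient open dominating set'') is genuinely needed, since for $4\mid n$ and odd $k$ a short parity argument rules out every order-$\le 2$ open coloring except those built from such a set or its complement; so the crux is choosing the right periodic pattern and confirming the ``exactly one neighbor in $S$'' condition across all residue classes. Once $S$ is in hand, the remaining steps are routine applications of Theorems~\ref{thm: remainder k mod n existence criteria and upper bound by n chi} and~\ref{thm: regular chi k n} together with the standard rigidity of $2$-colorings of connected bipartite graphs.
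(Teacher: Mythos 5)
Your proposal is correct, and the lower bound together with the characterization of $\chi_{(n,k)}(R_4)=2$ proceeds essentially as in the paper: the paper likewise observes that an order-$2$ proper coloring must be the bipartition coloring by parity of $i+j$, forcing $4\alpha\equiv 4\beta\equiv k\bmod n$ and hence $(4,n)\mid k$. Where you genuinely diverge is the upper bound. The paper exhibits an explicit \emph{proper} open $4$-coloring directly: a periodic pattern with labels $0,n,k,k+n$ arranged on a $4\times 4$ fundamental domain, and verifies it by inspection. You instead build a non-proper order-$2$ open coloring supported on the efficient open dominating set $S=\{(x,y):x+3y\equiv 0,1\ (\mathrm{mod}\ 8)\}$ and then invoke Theorem~\ref{thm: remainder k mod n existence criteria and upper bound by n chi} to upgrade it to a proper coloring with at most $|\ell|\,\chi(R_4)=4$ labels. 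Your verification of $S$ checks out: the four neighbors of a vertex with $s=x+3y$ carry values $s\pm 1,\ s\pm 3$, and for each residue of $s$ modulo $8$ exactly one of these lies in $\{0,1\}$, so every open neighborhood sum is $\bar k\equiv k\bmod n$ and $|\ell|=2$ since $k\not\equiv 0\bmod n$. Your route trades the paper's concrete $4\times4$ pattern check for a one-dimensional residue check plus an appeal to the general bootstrapping theorem; the paper's construction has the minor advantage of producing the proper $4$-coloring explicitly (which the subsequent Theorem~\ref{Square Tiling - Open Neighborhoods} does not need, but which makes the lemma self-contained). One small caveat: your parenthetical claim that every order-$\le 2$ open coloring must come from such a set ``or its complement'' is not needed for the argument and is stated loosely, but since it is only motivational it does not affect correctness.
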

\begin{proof}
    Write $V=\{v_{i,j} \mid (i,j)\in\Z\times\Z\}$ for the vertices of $R_4$.
    Consider the labeling defined as follows.
    \begin{enumerate}
        \item If $2 \mid i$,
        $$\ell(v_{i,j}) =
        \begin{cases}
            0, &\text{if } 2\mid j, \\
            n, &\text{otherwise}.
        \end{cases}
        $$
        \item If $i \equiv 1 \mod 4$,
        $$\ell(v_{i,j}) =
        \begin{cases}
            n, &\text{if } j \equiv 0\mod{4}, \\
            0, &\text{if } j \equiv 1\mod{4}, \\
            k, &\text{if } j \equiv 2\mod{4}, \\
            k+n, &\text{otherwise}.
        \end{cases}
        $$
        \item If $i \equiv 3 \mod 4$,
        $$\ell(v_{i,j}) =
        \begin{cases}
            k, &\text{if } j \equiv 0\mod{4}, \\
            k+n, &\text{if } j \equiv 1\mod{4}, \\
            n, &\text{if } j \equiv 2\mod{4}, \\
            0, &\text{otherwise}.
        \end{cases}
        $$
    \end{enumerate}
    Since this is a proper open coloring of order $4$ with remainder $k \mod{n}$, see Figure \ref{Coloring of S - Order 4}, we get $\chi_{(n,k)}(R_4) \leq 4$.

\begin{figure}[H]
\begin{center}
\begin{tikzpicture}
\def\numrows{7} 
\def\numcols{7} 
\def\squaresize{1.5}

    \foreach \row in {-3,...,3}{
        \foreach \col in {-3,...,3}{
            \pgfmathtruncatemacro{\y}{mod(mod(\row, 4) + 4, 4)}
            \pgfmathtruncatemacro{\yeven}{mod(mod(\y, 2) + 2, 2)}
            \pgfmathtruncatemacro{\x}{mod(mod(\col, 4) + 4, 4)}
            \pgfmathtruncatemacro{\xeven}{mod(mod(\x, 2) + 2, 2)}
            \filldraw (\col*\squaresize,\row*\squaresize) circle (2pt);
            \ifnum\x=0
                \ifnum\y=0
                    \node[anchor=south west, inner sep=2pt] at (\col*\squaresize,\row*\squaresize) {$0$};
                    \node[anchor=north east, inner sep=2pt] at (-1.5,-1.5) {$(0,0)$};
                \fi
                \ifnum\y=1
                    \node[anchor=south west, inner sep=2pt] at (\col*\squaresize,\row*\squaresize) {$k$};
                \fi
                \ifnum\y=2
                    \node[anchor=south west, inner sep=2pt] at (\col*\squaresize,\row*\squaresize) {$k+n$};
                \fi
                \ifnum\y=3
                    \node[anchor=south west, inner sep=2pt] at (\col*\squaresize,\row*\squaresize) {$n$};
                \fi
        \fi
        \ifnum\x=2
                \ifnum\y=0
                    \node[anchor=south west, inner sep=2pt] at (\col*\squaresize,\row*\squaresize) {$k+n$};
                \fi
                \ifnum\y=1
                    \node[anchor=south west, inner sep=2pt] at (\col*\squaresize,\row*\squaresize) {$n$};
                \fi
                \ifnum\y=2
                    \node[anchor=south west, inner sep=2pt] at (\col*\squaresize,\row*\squaresize) {$0$};
                \fi
                \ifnum\y=3
                    \node[anchor=south west, inner sep=2pt] at (\col*\squaresize,\row*\squaresize) {$k$};
                \fi
        \fi
        \ifnum\xeven=1
                \ifnum\yeven=0
                    \node[anchor=south west, inner sep=2pt] at (\col*\squaresize,\row*\squaresize) {$n$};
                \fi
                \ifnum\yeven=1
                    \node[anchor=south west, inner sep=2pt] at (\col*\squaresize,\row*\squaresize) {$0$};
                \fi
        \fi
        }
        \node[anchor=west, inner sep=10pt] at (3*\squaresize,\row*\squaresize) {$\cdots$};
        \node[anchor=east, inner sep=10pt] at (-3*\squaresize,\row*\squaresize) {$\cdots$};
    }
    \foreach \col in {-3,...,3}{
        \node[anchor=south, inner sep=10pt] at (\col*\squaresize,3*\squaresize) {$\vdots$};
        \node[anchor=north, inner sep=10pt] at (\col*\squaresize,-3*\squaresize) {$\vdots$};
    }

    \foreach \row in {-3,...,3}{
        \foreach \col in {-3,...,3}{
            \ifnum\col>-3
                \draw (\col*\squaresize,\row*\squaresize) -- ({(\col-1)*\squaresize},{\row*\squaresize});
            \fi
            \ifnum\row>-3
                \draw (\col*\squaresize,\row*\squaresize) -- ({\col*\squaresize},{(\row-1)*\squaresize});
            \fi
        }
    }
\end{tikzpicture}
\end{center}
\caption{A Proper Open Coloring of $R_4$ of Order $4$}
\label{Coloring of S - Order 4}
\end{figure}
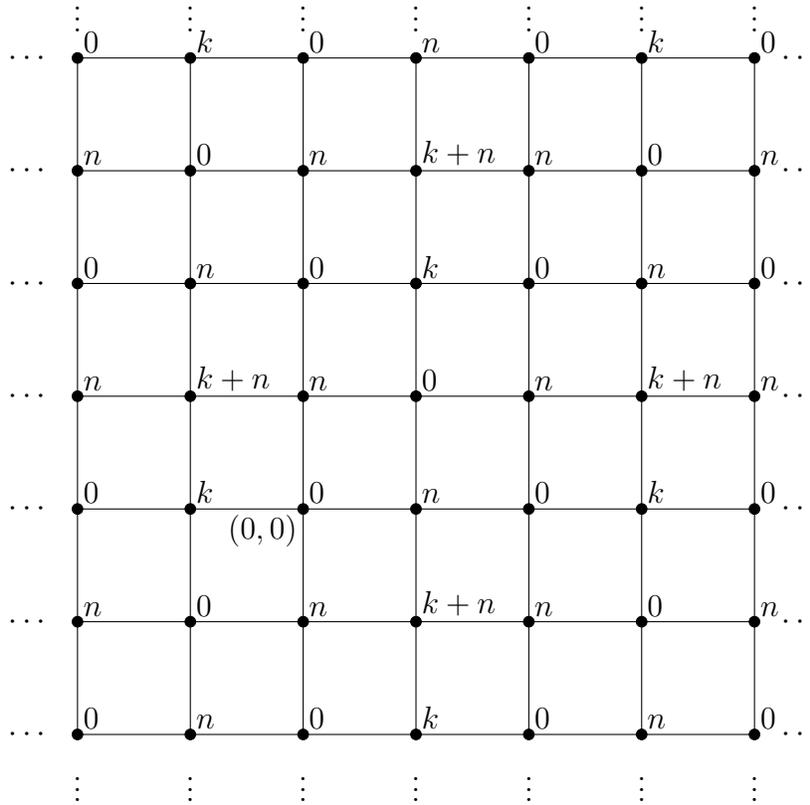

    Finally, $\chi_{(n,k)}(R_4) = 2$ if and only if $v_{i,j}$ is labeled according to the parity of $i+j$ with $\alpha, \beta \in \Z$, respectively, such that $\alpha \ne \beta$ and $4\alpha \equiv 4\beta \equiv k \mod n$. This is possible if and only if $(4,n)\mid k$.
\end{proof}

\begin{theorem} \label{Square Tiling - Open Neighborhoods}
Let $k \in \Z$ and $n \in\Z^+$ with $n\geq 2$ and $k\not\equiv 0\mod n$. Let $R_4$ be the regular, infinite square tiling of the plane. Then
$$
\chi_{(n,k)}(R_4) =
\begin{cases}
2, &\text{if } (4,n) \mid k, \\
4, &\text{otherwise.}
\end{cases}
$$
\end{theorem}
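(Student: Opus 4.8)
The plan is to lean on Lemma~\ref{Square Tiling Bounds - Open Neighborhoods}, which already gives $2\le\chi_{(n,k)}(R_4)\le 4$ together with $\chi_{(n,k)}(R_4)=2\iff(4,n)\mid k$; thus it suffices to prove that if $(4,n)\nmid k$ then $R_4$ admits no proper open coloring of order $3$ with remainder $k\bmod n$. Suppose such a coloring $\ell$ existed, and reduce modulo $n$ to get $f:\Z^2\to\Z_n$. If the three colors of $\ell$ are not pairwise incongruent mod $n$, then some vertex has all four of its neighbors lying in a single residue class $\alpha\in\Z_n$, whence $4\alpha\equiv k$ and $(4,n)\mid k$ --- a contradiction. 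So the three residues $r,s,t\in\Z_n$ are distinct and $f$ is a \emph{proper} $3$-coloring of the grid, all of whose open neighborhood sums are $\equiv k$. Writing $V(R_4)=E\sqcup O$ for the bipartition, neither part can be monochromatic (that would force $4\alpha\equiv k$ on the opposite part), and neither part can use exactly two of $r,s,t$: if, say, $E$ used only $\{r,s\}$, then for each vertex $v\in O$ its four neighbors lie in $E$, hence in $\{r,s\}$; unless they are monochromatic (again $4\alpha\equiv k$), two of them occupy opposite corners of a unit square and carry the two distinct values $r$ and $s$, forcing $f(v)=t$. Then $O$ is monochromatic, so $4t\equiv k$. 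Hence both $E$ and $O$ realize all three values, and the four neighbors of every vertex realize exactly the two values different from its own.

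Now $f$ is an honest proper $3$-coloring of $\Z^2$, every face of which is properly colored. Relabel colors by a bijection $\rho:\Z_3\to\{r,s,t\}$. The color increments $\pm1\pmod 3$ along the four edges of each face lift to integer increments $\pm1$ summing to $0$ (the sum of four $\pm1$'s lies in $\{-4,-2,0,2,4\}$, whose only multiple of $3$ is $0$); since $\Z^2$ is simply connected, these patch to a height function $h:\Z^2\to\Z$ with $|h(v)-h(w)|=1$ on every edge and $f=\rho(h\bmod 3)$. No vertex can have all four neighbors higher or all four lower --- that would monochromatize its neighborhood --- so $\Delta h(v):=\sum_{w\in N(v)}\bigl(h(w)-h(v)\bigr)\in\{-2,0,2\}$, and we call $v$ of type $+$, $0$, or $-$ accordingly. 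Setting $\sigma=r+s+t$ and $\Delta_m=\rho(m{+}1)-\rho(m{-}1)$, the open neighborhood condition becomes the family of congruences
\[
\tfrac12\,\Delta h(v)\cdot\Delta_{c(v)}\equiv k-2\sigma+2\rho(c(v))\pmod n,
\]
one linear condition per color, for each type occurring in the corresponding color class.

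The proof is then finished by a case analysis on the ``type profile'' of the three color classes. The type-$0$ vertices cannot realize all three colors: that would force $2\rho(0)\equiv2\rho(1)\equiv2\rho(2)$, impossible since $r,s,t$ are distinct. Moreover $h$ cannot be type $0$ everywhere (it would then be discrete harmonic with unit gradient, hence affine --- the diagonal-stripe coloring --- which again forces $2\rho$ constant on $\Z_3$), and $h$ cannot be of one fixed nonzero type everywhere, since summing $\Delta h$ over an $L\times L$ box yields a quantity of order $L^2$ while that sum is also a boundary quantity of size $O(L)$. For each surviving profile one combines the displayed congruences --- using, when $n$ is even, the extra information from reducing the type-$0$ relation modulo $2$ --- to conclude $4\rho(m)\equiv k$ for some $m$, contradicting $(4,n)\nmid k$. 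I expect this last step --- enumerating the admissible type profiles and verifying that each is either geometrically impossible or yields $(4,n)\mid k$ --- to be the main obstacle; all the earlier reductions are routine.
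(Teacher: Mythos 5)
Your reductions up to the point where every open neighborhood realizes exactly the two residues different from the center's are correct (and the observation that the three labels must be pairwise incongruent mod $n$, since otherwise a vertex of the third color has a monochromatic-mod-$n$ neighborhood, is a nice point that the paper glosses over). But the proof is not complete: the entire weight of the argument rests on the final ``type profile'' case analysis, which you do not carry out and explicitly flag as the expected main obstacle. What you have verified covers only fragments (all three colors cannot be type $0$; $h$ cannot be globally of one nonzero type), and the remaining profiles are neither enumerated nor shown to force $(4,n)\mid k$; indeed the target congruence you announce, $4\rho(m)\equiv k$ for a single color $m$, is not obviously what the surviving profiles yield. So as written this is a genuine gap, and the height-function machinery is pulling you toward a harder combinatorial problem than the theorem requires.

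The paper finishes from essentially your reduction point with a short counting argument that you should compare against. Once $|\ell(N(v))|=2$ for all $v$, split into two cases. If some color appears three times in some $N(v)$, the coloring is locally forced along a column: with $\ell(v_{0,0})=\alpha$, three $\beta$-neighbors, and $\ell(v_{0,1})=\gamma$, one deduces $\ell(v_{0,2})=\beta$, $\ell(v_{-1,2})=\ell(v_{1,2})=\gamma$, $\ell(v_{0,3})=\alpha$, and summing the three open neighborhood sums at $v_{0,0},v_{0,1},v_{0,2}$ gives
\[
4(\alpha+\beta+\gamma)\equiv 3k \pmod n .
\]
Otherwise every color present in every $N(v)$ appears exactly twice, so the sum at a vertex colored $\alpha$ is $2(\beta+\gamma)$, and adding the sums at one vertex of each color gives the same congruence $4(\alpha+\beta+\gamma)\equiv 3k$. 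Since $(4,3)=1$, either way $(4,n)\mid 3k$ forces $(4,n)\mid k$. Note that the paper's conclusion is a single congruence in the symmetric sum $\alpha+\beta+\gamma$, not a per-color condition; if you want to salvage your approach, aim your profile analysis at producing $4(r+s+t)\equiv 3k$ rather than $4\rho(m)\equiv k$.
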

\begin{proof}
    We continue our notation from Lemma \ref{Square Tiling Bounds - Open Neighborhoods}.
    It remains to show that the existence of a proper open $3$-coloring with remainder $k\mod n$ of $R_4$ forces $(4,n)\mid k$. To that end, suppose that $\ell$ is such a coloring with distinct labels $\alpha,\beta,\gamma \in \Z$.

    Let $R_4 =(V,E)$. If $|\ell(N(v))|\ge 3$ for any $v\in V$, then there is no possible label left for the vertex $v$. Thus, for all $v\in V$, $|\ell(N(v))|\leq 2$. If $|\ell(N(v))|=1$ for some $v$, the open neighborhood sum condition for $v$ implies that we can solve the equation $4x\equiv k \mod n$. As this requires $(4,n)\mid k$, we may reduce to the case where $|\ell(N(v))|=2$ for all $v\in V$.

    Consider the case where there exists some $v\in V$ such that a color appears three times in $N(v)$. After relabeling, w.l.o.g. we may assume that $\ell(v_{0,0})=\alpha$, $\ell(v_{-1,0}) = \ell(v_{0,-1}) = \ell(v_{1,0}) = \beta$, and $\ell(v_{0,1}) = \gamma$. Then $\ell(v_{-1,1}) = \ell(v_{1,1}) = \alpha$ and so, as $|\ell(N(v_{0,1}))|=2$, $\ell(v_{0,2}) = \beta$. Similarly, it follows that $\ell(v_{-1,2}) = \ell(v_{1,2}) = \gamma$ and $\ell(v_{0,3}) = \alpha$. Summing the open neighborhood sums at $v_{0,0}, v_{0,1}, v_{0,2}$ now shows that $4(\alpha+\beta+\gamma) \equiv 3k\mod n$. In turn, this requires $(4,n)\mid(3k)$ so that $(4,n)\mid k$.

    As a result, we are reduced to the case where, for all $v\in V$, each color that appears in $N(v)$ appears exactly twice. However, summing the open neighborhood sums at a vertex labeled by $\alpha$, one by $\beta$, and one by $\gamma$ gives again $4(\alpha+\beta+\gamma) \equiv 3k \mod n$ so that $(4,n)\mid k$.
\end{proof}

We will write $T^*_m$ for the \emph{(infinite) regular tree of degree $m$} so that the degree of each vertex is $m$. We will fix a vertex of $T^*_m$, $v_0$, and view it as the root. In that case, for any vertex $v$ of $T^*_m$, write $h(v)$ for the height of $v$, i.e., the distance from $v$ to the root $v_0$.

\begin{theorem}\label{thm: infinite regular trees}
    Let $k\in\Z$ and $n,m\in\Z^+$ with $n\geq 2$, $k\not\equiv 0\mod n$, and $m\geq 2$. For the regular tree of degree $m$,
    \[
        \chi_{(n,k)}(T^*_m) =
        \begin{cases}
            2, & \text{ if } (m,n)\mid k, \\
            3 \text{ or } 4, & \text{ otherwise.}
        \end{cases}
    \]
\end{theorem}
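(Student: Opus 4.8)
\emph{Overview and the easy case.} The plan is to split on whether $(m,n)\mid k$. If $(m,n)\mid k$, then since $T^*_m$ is $m$-regular with $\chi(T^*_m)=2$ (it is an infinite tree, hence bipartite and with edges), the first implication of Theorem~\ref{thm: regular chi k n} — whose proof needs only $\chi(T^*_m)$ finite, not $T^*_m$ finite — gives $\chi_{(n,k)}(T^*_m)=\chi(T^*_m)=2$ immediately. So from now on assume $(m,n)\nmid k$. For the lower bound $\chi_{(n,k)}(T^*_m)\ge 3$, I would rule out order $2$: since $T^*_m$ is connected and bipartite, any proper $2$-coloring is, up to swapping the colors, the bipartition coloring, with one part monochromatically $\alpha$ and the other $\beta$; then every vertex has all $m$ of its neighbors in the opposite part, so its open neighborhood sum is $m\alpha$ or $m\beta$, and $m\alpha\equiv m\beta\equiv k\pmod n$ forces $mx\equiv k\pmod n$ to be solvable, i.e.\ $(m,n)\mid k$, a contradiction. (Existence of a proper open coloring of finite order will follow from the upper bound below.)

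\emph{Upper bound strategy.} For $\chi_{(n,k)}(T^*_m)\le 4$, the idea is to exhibit a (not necessarily proper) open coloring of $T^*_m$ of order $2$ with remainder $k\bmod n$, and then invoke Theorem~\ref{thm: remainder k mod n existence criteria and upper bound by n chi}, which — using $\chi(T^*_m)=2$ — both guarantees that $\chi_{(n,k)}(T^*_m)$ exists and bounds it by $|\ell|\,\chi(T^*_m)=2\cdot 2=4$. Choosing the representative of $k$ in $\{1,\dots,n-1\}$ and using the two labels $0$ and $k$, the open neighborhood sum at a vertex $v$ is $k\,|N(v)\cap Y|$, where $Y$ is the set of vertices labeled $k$. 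Hence it suffices to find $Y\subseteq V$ with $|N(v)\cap Y|=1$ for every $v\in V$, i.e.\ an efficient open dominating set of $T^*_m$; then $\ell\colon V\to\{0,k\}$ with $\ell^{-1}(k)=Y$ has all open neighborhood sums equal to $k$ and has order $2$.

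\emph{Constructing $Y$.} I would fix an end $\omega$ of $T^*_m$ together with the orientation it induces, under which each vertex has one parent (its neighbor toward $\omega$) and $m-1$ children. Assign each vertex one of four states: $P$ (``in $Y$, matched to one chosen child''), $Q$ (``in $Y$, matched to its parent''), $R$ (``outside $Y$, dominated by its parent''), $S$ (``outside $Y$, to be dominated by one chosen child''), with the states of the children of a vertex determined by its own state via $P\mapsto(Q,R,\dots,R)$ with $m-2$ copies of $R$, $\ Q\mapsto(R,\dots,R)$ with $m-1$ copies of $R$, $\ R\mapsto(S,\dots,S)$ with $m-1$ copies of $S$, and $S\mapsto(P,S,\dots,S)$ with $m-2$ copies of $S$; then set $Y=\{v:\text{the state of }v\text{ is }P\text{ or }Q\}$. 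Since $T^*_m$ has no canonical root, I would first fix states along one bi-infinite ray toward $\omega$ periodically according to the pattern $\dots,S,R,Q,P,S,R,Q,P,\dots$, check that this is compatible with the transition rules read in either direction, and then propagate states downward everywhere. A four-case check on the state of $v$ (e.g.\ a $P$-vertex's unique $Y$-neighbor is its $Q$-child; a $Q$- or $R$-vertex's is its parent; an $S$-vertex's is its $P$-child) then confirms $|N(v)\cap Y|=1$ for all $v$.

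\emph{Main obstacle.} The genuinely nontrivial step is the construction of $Y$. No level-periodic choice can work once $m\ge 3$, because the rule $P\mapsto(Q,R,\dots,R)$ puts two distinct states among children that lie at the same distance from $\omega$; one therefore needs the branching state machine, and some care is required to verify that the state assignment extends consistently across the whole bi-infinite tree rather than just a rooted one. (For $m=2$ the machine degenerates to the period-$4$ pattern $P,Q,R,S$ along the bi-infinite path, with $Y$ the union of the $P$- and $Q$-classes, recovering the familiar alternating structure.) Everything else — the two inequalities and the two appeals to the earlier theorems — is routine.
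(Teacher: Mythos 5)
Your proof is correct and follows the same skeleton as the paper's: establish $2\le\chi_{(n,k)}(T^*_m)\le 4$ by exhibiting an order-$2$ (not necessarily proper) open coloring with labels $\{0,k\}$ and invoking Theorem~\ref{thm: remainder k mod n existence criteria and upper bound by n chi}, then observe that an order-$2$ \emph{proper} open coloring must be the bipartition coloring, so that order $2$ is achievable exactly when $(m,n)\mid k$. Your shortcut through Theorem~\ref{thm: regular chi k n} for the case $(m,n)\mid k$ is a harmless variant of the same idea. The one place you genuinely diverge is the construction of the set $Y=\ell^{-1}(k)$ with $|N(v)\cap Y|=1$ for all $v$: your end-plus-four-state-machine argument is valid, but the ``main obstacle'' you identify is not really there. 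The paper simply fixes an arbitrary vertex $v_0$ as a root and inducts on height: label $v_0$ with $0$; at the root, and at every vertex whose parent is labeled $0$, label exactly one child with $k$ and the remaining children with $0$; at every vertex whose parent is labeled $k$, label all children with $0$. Every vertex then has exactly one neighbor labeled $k$, and no choice of an end, no bi-infinite ray, and no consistency check in two directions is needed. Your construction buys nothing extra here, though it does produce a coloring invariant along the chosen end, which could be of independent interest.
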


\begin{proof}
    Beginning with $v_0$ labeled by $0$ and inducting on the height, it is always possible to label $T^*_m$ with $\{0,k\}$ to get an open coloring with remainder $k\mod n$. By Theorem \ref{thm: remainder k mod n existence criteria and upper bound by n chi}, we have $2 \leq \chi_{(n,k)}(T^*_m) \leq 4$.

    Now a proper open $2$-coloring with remainder $k\mod n$ of $T^*_m$ exists if and only if $T^*_m$ can be labeled according to the parity of $h(v)$ with $\alpha, \beta \in \Z$, respectively, such that $\alpha \ne \beta$ and $m\alpha\equiv m\beta\equiv k \mod n$. This is possible if and only if $(m,n)\mid k$.
\end{proof}

\begin{question}
In Theorem \ref{thm: infinite regular trees}, it is not known if $\chi_{(n,k)}(T^*_m) = 3$ is possible. In the simplest case, $m=2$, it is straightforward to see that when $(2,n)\nmid k$, then $\chi_{(n,k)}(T^*_2)=4$ with a repeating pattern of labels $(\ldots,\alpha, \beta, \gamma, \delta,\ldots)$ with $\gamma \equiv (k-\alpha) \mod n$ and $\delta \equiv (k-\beta) \mod n$.
\end{question}

 We write $T_m$ for the \emph{(rooted) complete $m$-ary tree of infinite height}. We continue to write $h(v)$ for the distance from the vertex $v$ of $T_m$ to its root, $v_0$.

\begin{theorem} \label{infinite trees - open neighborhoods}
Let $k\in \Z$ and $n,m\in\Z^+$ with $n\geq 2$, $k\not\equiv 0\mod n$, and $m\geq 1$. For the complete $m$-ary tree of infinite height,
\[
        \chi_{(n,k)}(T_m)=
            \begin{cases}
                3, & \text{ if } (m+1,n)\mid k, \\
                3 \text{ or } 4, & \text{otherwise.}
            \end{cases}
\]
\end{theorem}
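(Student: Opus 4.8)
Mirroring the proof of Theorem~\ref{thm: infinite regular trees}, I would first establish the coarse bounds $3\le\chi_{(n,k)}(T_m)\le 4$ for all admissible $n,m,k$, and then show that the value is exactly $3$ precisely when $(m+1,n)\mid k$ by writing down an explicit proper open $3$-coloring with remainder $k\mod n$; this last construction is the crux.

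\textbf{The coarse bounds.} Since $\chi(T_m)=2$ and properness is a prerequisite, $\chi_{(n,k)}(T_m)\ge 2$ whenever it exists. For the upper bound, beginning with $v_0$ labeled $0$ and inducting on the height, I would label the $m$ children of each vertex $v$ with values from $\{0,k\}$ summing to $k-\ell(p(v))\mod n$ (for $v=v_0$, to $k$); this is possible because $m\ge 1$ and $\ell(p(v))\in\{0,k\}$, so one needs the children to sum either to $k$ (one child $k$, the rest $0$) or to $0$ (all children $0$). This gives an open coloring with remainder $k\mod n$ of order $\le 2$, and Theorem~\ref{thm: remainder k mod n existence criteria and upper bound by n chi} then yields $\chi_{(n,k)}(T_m)\le 2\,\chi(T_m)=4$. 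To push the lower bound up to $3$, suppose $\ell$ were a proper open $2$-coloring with remainder $k\mod n$; since $T_m$ is connected and bipartite, $\ell$ is constant on each of the two bipartition classes, say $a$ on the even-distance class $A$ (containing $v_0$) and $b$ on the odd-distance class $B$. The open-sum conditions at $v_0$, at a distance-$1$ vertex, and at a distance-$2$ vertex read $mb\equiv k$, $(m+1)a\equiv k$, and $(m+1)b\equiv k\mod n$; subtracting the first from the third forces $b\equiv 0$, hence $k\equiv mb\equiv 0\mod n$, contradicting $k\not\equiv 0\mod n$. Thus $3\le\chi_{(n,k)}(T_m)\le 4$ in all cases.

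\textbf{The case $(m+1,n)\mid k$.} Here I would construct a proper open $3$-coloring directly. Fix $\alpha\in\Z$ with $(m+1)\alpha\equiv k\mod n$ and color every vertex of $A$ with $\alpha$. Every vertex of $B$ then automatically has open-neighborhood sum $(m+1)\alpha\equiv k$, since all $m+1$ of its neighbors lie in $A$. The $A$-vertices are handled by choosing the $B$-labels from $\{0,k\}$ via a top-down rule: the children of $v_0$, and of any $A$-vertex whose $B$-parent is labeled $0$, consist of one vertex labeled $k$ and $m-1$ vertices labeled $0$; the children of an $A$-vertex whose $B$-parent is labeled $k$ are all labeled $0$. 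Since each $B$-vertex is the unique child of an $A$-vertex, this is well-defined, and one checks that the open sum at $v_0$ is $k$, that at every other $A$-vertex it is $(\text{parent label})+(\text{children's sum})$ equal to $0+k$ or $k+0$, and that the coloring is proper as soon as $\alpha\notin\{0,k\}$, in which case $\{\alpha,0,k\}$ are three distinct labels. Since the solutions of $(m+1)\alpha\equiv k\mod n$ form a coset of size $(m+1,n)$ and $0$ is never a solution, one can always pick $\alpha\ne k$ when $(m+1,n)\ge 2$, and also when $(m+1,n)=1$ unless the unique solution satisfies $\alpha\equiv k$, i.e.\ unless $mk\equiv 0\mod n$.

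\textbf{Main obstacle.} The delicate part is the residual case $(m+1,n)=1$ together with $mk\equiv 0\mod n$, where $\alpha$ is forced to equal $k$ and the three natural labels collapse to two. Here I would replace ``one child labeled $k$'' with ``$j$ children labeled $\beta$'', where $j\in\{1,\dots,m\}$ and $\beta\in\Z\setminus\{0,k\}$ satisfy $j\beta\equiv k\mod n$ (the interior rule becoming ``$j-1$ children labeled $\beta$''), and argue that such a pair exists except in a bounded list of small configurations — chiefly $n=2$, where $\Z_n$ has too few residues and one must instead use three distinct integers representing only two residues mod $n$, assigned by a recursively defined finite-state scheme keyed on the pair $(\ell(v),\ell(p(v)))$ as in the small worked examples. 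Carrying out this bounded case analysis over $(m,n,k)$, and verifying consistency of the recursive scheme in the exceptional cases, is where I expect the real work to lie.
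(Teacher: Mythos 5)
Your overall architecture — the bounds $3\le\chi_{(n,k)}(T_m)\le 4$ via the $\{0,k\}$ labeling and Theorem~\ref{thm: remainder k mod n existence criteria and upper bound by n chi}, the exclusion of a proper open $2$-coloring via the three congruences $mb\equiv(m+1)a\equiv(m+1)b\equiv k\mod n$, and the explicit $3$-coloring (even levels constant $\alpha$ with $(m+1)\alpha\equiv k\mod n$, odd levels labeled $0$ except one child of each even-level vertex whose parent is labeled $0$) — is exactly the paper's proof. However, your ``main obstacle'' paragraph introduces a genuine gap where none should exist, and leaves the case $(m+1,n)=1$ with $mk\equiv 0\mod n$ (which includes, e.g., all even $m$ with $n=2$) unresolved, deferring to an unspecified ``bounded case analysis'' and ``finite-state scheme.'' As written, the proof is incomplete in that subcase.

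The error is that you are treating $\alpha$ as a residue class in $\Z_n$ rather than as an integer label. The order of a labeling counts distinct \emph{integers}, and properness requires adjacent vertices to receive distinct \emph{integers}; only the neighborhood sums are read modulo $n$. So even when the unique solution residue of $(m+1)x\equiv k\mod n$ is $[k]$, you may take $\alpha=k+n$ (or any other representative avoiding $0$ and $k$; note $k+n\ne 0$ since $k\not\equiv 0\mod n$, and $k+n\ne k$ since $n\ge 2$). The labels $\{\alpha,0,k\}$ are then three distinct integers, the coloring is proper, and all sum conditions are unchanged. This is precisely what the paper does when it writes ``choose some $k\ne\alpha\in\Z$ such that $(m+1)\alpha\equiv k\mod n$.'' With this one-line fix your argument closes completely; the entire $j\beta$/recursive-scheme machinery is unnecessary.
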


\begin{proof}
    Beginning with $v_0$ labeled by $0$ and inducting on the height, it is always possible to label $T_m$ with $\{0,k\}$ to get an open coloring with remainder $k\mod n$. As a result, $2\leq \chi_{(n,k)}(T_m) \leq 4$ by Theorem \ref{thm: remainder k mod n existence criteria and upper bound by n chi}.

    A proper open $2$-coloring with remainder $k\mod n$ of $T_m$ exists if and only if $T_m$ can be labeled according to the parity of $h(v)$ with $\alpha, \beta \in \Z$, respectively, such that $\alpha \ne \beta$, $m\beta \equiv k\mod n$, $(m+1)\alpha \equiv k \mod n$, and $(m+1)\beta\equiv k \mod n$. As $m\beta \equiv (m+1)\beta \mod n$ implies $\beta \equiv 0\mod n$ and $k\equiv 0 \mod n$, it is not possible.

    If $(m+1,n)\mid k$, choose some $k\ne \alpha\in \Z$ such that $(m+1)\alpha \equiv k\mod n$. Then a proper open $3$-coloring with remainder $k\mod n$ of $T_m$ is achievable by labeling all vertices of even height with $\alpha$. For vertices of height $1$, label one vertex with $k$ and the remainder with $0$. Induct on the height by labeling all grandchildren of a vertex labeled by $k$ with $0$. For grandchildren of a vertex labeled by $0$, label one with $k$ and the remainder with $0$.
\end{proof}

\begin{remark}
In Theorem \ref{infinite trees - open neighborhoods}, $\chi_{(n,k)}(T_m)=3$ can be achieved also for some cases where $(m+1,n)\nmid k$.
Indeed, for $k=1$ and $n=3$ with $3\mid (m+1)$, a proper open $3$-coloring with remainder $1 \mod 3$ of $T_m$ is achieved by a constant row labeling according to the repeated pattern $1,-1,0$ starting from the root $v_0$.
\end{remark}

We write $T_{m,d}$ for the \emph{(rooted) complete $m$-ary tree of height $d$}. We continue to write $h(v)$ for the distance from the vertex $v$ of $T_m$ to its root, $v_0$. We also write $r(v):=d-h(v)$ for the \emph{reverse height}.
\begin{theorem}
    Let $k\in \Z$ and $n,m,d\in\Z^+$ with $n\geq 2$ and $k\not\equiv 0\mod n$.
    Write $\delta = \lfloor \frac{d}{2} \rfloor$.

    If $d$ is even, then $\chi_{(n,k)}(T_{m,d})$ exists if and only if
    \[ n\mid \left( k \, \frac{m^{\delta +1}+ (-1)^{\delta}}{m+1} \right). \]
    In that case,
    \[ \chi_{(n,k)}(T_{m,d}) \leq d+1.\]

    If $d$ is odd, then $\chi_{(n,k)}(T_{m,d})$ always exists.
    If
    \[ n\mid \left( k \, \frac{m^{\delta +1}+ (-1)^{\delta}}{m+1} \right), \]
    then
    \[ \chi_{(n,k)}(T_{m,d}) \leq d + 1.\]
    Otherwise,
    \[ \chi_{(n,k)}(T_{m,d}) \leq d+\delta+2.\]
\end{theorem}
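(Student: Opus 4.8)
I would analyze $T_{m,d}$ level by level from the leaves up. Number the levels $0$ (root) to $d$ (leaves) and put $F_0=k$, $F_j=k-mF_{j-1}$, so that $F_j=k\,\frac{1-(-m)^{j+1}}{m+1}$. The crux is a forcing step for an arbitrary open coloring $\ell$ with remainder $k\bmod n$: the equation at a leaf pins down its parent, so every level-$(d-1)$ vertex has label $\equiv k$ $(=F_0)$; and if every vertex at level $h+1$ has label $\equiv c$, the equation at a level-$h$ vertex $v$ is $\ell(\mathrm{par}(v))+mc\equiv k$, forcing every level-$(h-1)$ vertex to be $\equiv k-mc$. Iterating downward, every vertex at level $d-(2j+1)$ is $\equiv F_j\bmod n$ for all $j$ with $2j+1\le d$. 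So the forced levels are exactly those $\equiv d-1\pmod 2$ and the rest are free; and throughout I may shift any single label by a multiple of $n$ without changing any neighborhood-sum residue, so once residues are correct, properness and finiteness come for free.

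\textbf{Case $d$ even.} The forced levels are the odd ones, the lowest being level $1$, forced to $F_{\delta-1}$. Hence the root's equation reads $mF_{\delta-1}\equiv k\bmod n$, and --- using $F_{\delta-1}=k\frac{1-(-m)^\delta}{m+1}$ and the factorization $m^{\delta+1}+(-1)^\delta=(m+1)\sum_{i=0}^\delta(-1)^i m^{\delta-i}$ --- this is equivalent to $n\mid k\,\frac{m^{\delta+1}+(-1)^\delta}{m+1}$. If that fails, no open coloring (proper or not) exists, so by Theorem \ref{thm: remainder k mod n existence criteria and upper bound by n chi} $\chi_{(n,k)}(T_{m,d})$ does not exist. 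If it holds, I would build a constant-row labeling: every vertex of a level gets a single residue, the odd levels taking their forced $F$-values and the even levels built from the bottom up via $a_{2i-2}\equiv k-ma_{2i}$ (the relation coming from a level-$(2i-1)$ vertex), starting at $a_d\equiv 0$. The $F_j$-recursion makes every neighborhood sum $\equiv k$, and choosing distinct integer representatives for the $d+1$ rows gives a proper coloring, since in a tree adjacent vertices lie in consecutive rows; thus $\chi_{(n,k)}(T_{m,d})\le d+1$.

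\textbf{Case $d$ odd.} Now the forced levels are the even ones, down to the root ($\equiv F_\delta$); the forcing leaves only the root's equation (that its $m$ children sum to $\equiv k$) still to be arranged. I would first describe a construction valid for all $n,k$, which also settles existence: keep the $\delta+1$ forced even levels at their $F$-values, and fill in the odd levels top-down by a correction pattern --- give each free vertex the value $0$ or $k$ so that the $m$ children of every even-level vertex $w$ sum to $k$ minus the residue already assigned to $w$'s parent (possible since ``one child $\equiv k$, rest $\equiv 0$'' gives sum $k$ and ``all $\equiv 0$'' gives sum $0$), and let the root's children be one $k$ and the rest $0$. One verifies every neighborhood condition holds, so $\chi_{(n,k)}(T_{m,d})$ always exists, and since each of the $\delta+1$ odd levels carries at most two colors, $\chi_{(n,k)}(T_{m,d})\le(\delta+1)+2(\delta+1)=d+\delta+2$. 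Finally, when $n\mid k\,\frac{m^{\delta+1}+(-1)^\delta}{m+1}$ --- equivalently $F_\delta\equiv 0\bmod n$ --- I would instead use the constant-row labeling with leaf row $a_d\equiv 0$: propagating upward, the single remaining constraint, at the root, becomes $mF_{\delta-1}\equiv k$, i.e. $F_\delta\equiv 0$, so it succeeds and yields $\chi_{(n,k)}(T_{m,d})\le d+1$.

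\textbf{Where the difficulty is.} The substantive point is the forcing step --- setting up the indexing so the forced/free parity split and the recursion $F_j=k-mF_{j-1}$ emerge cleanly --- together with, in the odd non-divisibility case, checking that the correction pattern can be driven all the way to the leaves without disturbing the forced even levels and that the resulting labeling is proper after adjusting by multiples of $n$. The passages from ``$mF_{\delta-1}\equiv k$'' and ``$F_\delta\equiv 0$'' to the stated divisibility are routine. (Reusing one pair $\{0,k\}$ across the pairwise non-adjacent odd levels actually improves the $d+\delta+2$ bound to $\delta+3\le d+1$, but only $d+\delta+2$ is claimed.)
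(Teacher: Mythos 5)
Your proposal is correct and follows essentially the same route as the paper: the same leaf-to-root forcing of alternate rows via the recursion $F_j = k - mF_{j-1}$ (the paper phrases it in terms of reverse height rather than levels from the root), the same single remaining constraint at the root yielding the divisibility condition, and the same constant-row constructions for the $d+1$ bounds. The only (immaterial) difference is in the odd-$d$ non-divisible case, where you spread a $0/k$ correction pattern over all free rows, while the paper perturbs a single leaf by $\alpha_0$ and propagates that perturbation up one root-to-leaf path; both give at most two labels per free row and the same bound $d+\delta+2$.
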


\begin{proof}
    Recall that $r$ denotes the reverse height function. By definition, in any open coloring with remainder $k\mod n$ of $T_{m,d}$, the labels of vertices $v$ with $r(v)=0$ inductively determine the labels of all vertices $v$ with $r(v)\in 2\Z$, and the labels of vertices $v$ with $r(v)=1$ must be congruent to $k \mod n$ and inductively determine the labels of all vertices $v$ with $r(v)\in 2\Z+1$. After that, there will only be one open neighborhood sum to be checked, at the root $v_0$.

    As the label of any vertex $v$ with $r(v)=1$ is congruent to $k\mod n$, it follows that each odd reverse height row of $T_{m,d}$ consists of congruent labels $\mod n$. Let $x_i$ denote the label of some vertex $v$ with $r(v)=2i+1$, $1\leq 2i+1 \leq d$. Then $x_0 \equiv k \mod n$ and $(x_{i}+m x_{i-1}) \equiv k \mod n$ for all $3\le 2i+1 \le d$, a linear recurrence relation.

    Our inhomogeneous linear recurrence relation $(x_{i}+m x_{i-1}) \equiv k \mod n$ leads to the second-order homogeneous linear recurrence relation \[x_{i}+(m-1) x_{i-1}-mx_{i-2} \equiv 0 \mod n\] with the initial conditions of $x_0 \equiv k \mod n$ and $x_1 \equiv k(1-m) \mod n$. Solving this recurrence relation, we see that
    \[ x_i \equiv k\, \frac{1 -(-m)^{i+1}}{m+1} \mod n. \]

    In particular, when $d=2\delta$ is even, the final constraint of having a $k\mod n$ open neighborhood sum at $v_0$ becomes
    \[ k\equiv m x_{\delta -1} \equiv  km\, \frac{1 -(-m)^{\delta}}{m+1} \mod n.\]
    Rewriting gives that an open coloring with remainder $k\mod n$ of $T_{m,d}$ cannot exist if
    \[ k\, \frac{1 +m(-m)^{\delta}}{m+1} \equiv (-1)^\delta k\, \frac{m^{\delta+1}+(-1)^\delta}{m+1} \not\equiv 0\mod n.\]
    In all other cases of $d$ and $m$, an open coloring with remainder $k\mod n$ for rows of odd reverse height can be achieved through a constant row labeling,
    which requires at most $\delta$ distinct labels for even $d$ and at most $\delta +1$ distinct labels for odd $d$.

    It remains to discuss an open coloring with remainder $k\mod n$ for rows of even reverse height. We will see that such a coloring can always be obtained by labeling all vertices $v$ with $r(v)=0$, up to congruence $\mod n$, with $0$,
    except possibly for one vertex $v^*$ labeled with $\alpha_0$. In the following, we will discuss how this initial condition affects the labels of all other vertices $v$ with $r(v) \in 2\Z$.


    Again, it is easy to see that all the vertices of any even reverse height row of $T_{m,d}$ that lie not on the shortest path from $v^*$ to $v_0$ must share congruent labels $\mod n$. Let $y_i$ denote the label of some vertex $v$ with $r(v)=2i$, $0\leq 2i < d$, that does not lie on the shortest path from $v^*$ to~$v_0$. Then $y_0 \equiv 0 \mod n$ and $(y_{i}+m y_{i-1}) \equiv k \mod n$ for all $2\le 2i < d$. We see that $y_1 \equiv k \mod n$, hence
    \[ y_i \equiv x_{i-1} \equiv k\, \frac{1 -(-m)^{i}}{m+1} \mod n.  \]
    Similarly, for the vertex $v$ with $r(v)=2i$ on the shortest path from $v^*$ to $v_0$, one finds a label congruent to $(y_i + (-1)^i \alpha_0) \mod n$.

    In particular, when $d=2\delta +1$ is odd, the final constraint of having a $k\mod n$ open neighborhood sum at $v_0$ becomes
    \[ k \equiv m y_{\delta} + (-1)^\delta \alpha_0 \equiv \left( km\, \frac{1 -(-m)^{\delta}}{m+1} + (-1)^\delta \alpha_0\right) \mod n,\]
    which can always be solved for $\alpha_0$ and gives
    \[\alpha_0 \equiv k\, \frac{m^{\delta+1}+(-1)^\delta}{m+1} \mod n.\]
    Thus, an open coloring with remainder $k\mod n$ for rows of even reverse height is always possible.
    If $d=2\delta +1$ is odd with
    \[k\, \frac{m^{\delta+1}+(-1)^\delta}{m+1} \not\equiv 0 \mod n,\]
    we must choose $\alpha_0 \not\equiv 0 \mod n$ and two distinct labels per row may be necessary for the open coloring. Thus, in this case, we succeed with a proper open coloring with remainder $k\mod n$ of $T_{m,d}$ by using at most $2(\delta+1)$ distinct labels for the rows of even reverse height in addition to the labels used for rows of odd reverse height.
    In all other cases of $d$ and $m$, we can choose $\alpha_0 \equiv 0 \mod n$, and an open coloring with remainder $k\mod n$ for rows of even reverse height can be achieved through a constant row labeling.
    This will require at most an additional $\delta + 1$ distinct labels for a proper coloring.
\end{proof}

We turn now to the generalized Petersen graph $G(m,j)$. We will use the same notation as for Theorem \ref{thm: chi n gen petersen}.

\begin{theorem} \label{thm: gen pet n k}
    Let $k \in \Z$ and $n,m,j\in\Z^+$ with $n\geq 2$, $k\not\equiv 0 \mod n$, $m\geq 3$, and $1\leq j < \frac{m}{2}$. Then the following holds:
    \begin{itemize}
    \item If $(3,n)\mid k$, then
        $\chi_{(n,k)}(G(m,j)) = \chi(G(m,j))$.

    \item If $(3,n) \nmid (km)$, then $\chi_{(n,k)}(G(m,j))$ does not exist.

    \item If $(3,n)\nmid k$, $(3,n) \mid (km)$, and $3 \nmid j$, then $\chi_{(n,k)}(G(m,j))$ exists and
    \[  \chi(G(m,j)) \leq \chi_{(n,k)}(G(m,j)) \leq 2 \chi(G(m,j)). \]
    \end{itemize}
\end{theorem}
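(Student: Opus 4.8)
The plan is to handle the first two bullets via Theorem~\ref{thm: regular chi k n} and to prove the third by exhibiting an explicit open coloring of order $2$ and then quoting Theorem~\ref{thm: remainder k mod n existence criteria and upper bound by n chi}.

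To begin, recall that $G(m,j)$ is a finite $3$-regular graph on $|V|=2m$ vertices, so Theorem~\ref{thm: regular chi k n} applies with $3$ playing the role of the regularity parameter. If $(3,n)\mid k$, that theorem immediately gives $\chi_{(n,k)}(G(m,j))=\chi(G(m,j))$, which is the first bullet. For the second bullet, note that $(3,n)\in\{1,3\}$; the hypothesis $(3,n)\nmid km$ rules out $(3,n)=1$, so $(3,n)=3$ and $3\nmid km$, whence $3\nmid 2km$ since $\gcd(2,3)=1$. Thus $(3,n)\nmid k|V|$, and Theorem~\ref{thm: regular chi k n} yields that $\chi_{(n,k)}(G(m,j))$ does not exist.

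For the third bullet, I would first unpack the hypotheses: $(3,n)\nmid k$ forces $(3,n)=3$, so $3\mid n$ and $3\nmid k$, and together with $(3,n)\mid km$ and the primality of $3$ this forces $3\mid m$. The heart of the argument is then the order-$2$ coloring $\ell\colon V\to\{0,k\}$ defined by $\ell(v_i)=\ell(u_i)=k$ if $i\equiv 1 \mod 3$ and $\ell(v_i)=\ell(u_i)=0$ otherwise; since $3\mid m$, the residue of an index modulo $3$ is well defined modulo $m$, so this is a legitimate labeling. To see that $\ell$ is an open coloring with remainder $k\mod n$, observe that every open neighborhood of $G(m,j)$ is either $\{v_{i-1},v_{i+1},u_i\}$ or $\{u_{i-j},u_{i+j},v_i\}$, the three indices involved being $i-1,i,i+1$ in the former case and $i-j,i,i+j$ in the latter. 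Since $3\nmid j$, so that both $j$ and $2j$ are nonzero modulo $3$, in either case the three indices represent all three residue classes modulo $3$; hence exactly one of the three vertices is labeled $k$ and the other two are labeled $0$, so every open neighborhood sum equals $k$. As $k\not\equiv 0 \mod n$ forces $k\neq 0$, we have $|\ell|=2$, and Theorem~\ref{thm: remainder k mod n existence criteria and upper bound by n chi} then shows that $\chi_{(n,k)}(G(m,j))$ exists and that $\chi(G(m,j))\le\chi_{(n,k)}(G(m,j))\le 2\chi(G(m,j))$.

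The routine but essential checks I would be most careful about are that the three vertices listed in each open neighborhood are genuinely distinct (using $m\ge 3$ for the exterior neighborhoods and $2j<m$ for the interior ones) and that $3\nmid j$ really does make the interior index-triple cover all residues modulo $3$; this is the one place the hypothesis $3\nmid j$ is used, and it is precisely the configuration left untreated when $3\mid j$. Everything else is bookkeeping already absorbed into Theorems~\ref{thm: regular chi k n} and~\ref{thm: remainder k mod n existence criteria and upper bound by n chi}, so I do not anticipate a serious obstacle beyond making the neighborhood analysis airtight.
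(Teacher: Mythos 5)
Your proposal is correct and follows essentially the same route as the paper: the first two bullets via Theorem~\ref{thm: regular chi k n} (using $|V|=2m$ and $3$-regularity), and the third via an order-$2$ labeling constant on residue classes of indices modulo $3$ (the paper puts $k$ on indices $\equiv 0\bmod 3$ rather than $\equiv 1\bmod 3$, an immaterial shift), followed by Theorem~\ref{thm: remainder k mod n existence criteria and upper bound by n chi}. Your explicit verification that the index triples $\{i-1,i,i+1\}$ and $\{i-j,i,i+j\}$ hit all residues modulo $3$ when $3\nmid j$ is exactly the detail the paper leaves implicit.
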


\begin{proof}
    The cases of $(3,n)\mid k$ and $(3,n) \nmid (km) \iff (3,n) \nmid (2km)$ are handled by Theorem \ref{thm: regular chi k n}. Therefore assume $(3,n)\nmid k$ and $(3,n) \mid (km)$, which is equivalent to $3\mid n$, $3\nmid k$, and $3\mid m$.

    If $3\nmid j$, labeling $v_i, u_i$ with $k$ for $3\mid i$ and with $0$ otherwise gives an open $2$-coloring with remainder $k\mod n$ of $G(m,j)$. By Theorem \ref{thm: remainder k mod n existence criteria and upper bound by n chi},
    \[ \chi(G(m,j)) \leq \chi_{(n,k)}(G(m,j)) \leq 2\chi(G(m,j)).\qedhere \]
\end{proof}

\begin{remark}
Theorem \ref{thm: gen pet n k} still leaves open the case $3\mid n$, $3\nmid k$, $3\mid m$, and $3\mid j$. In this case, a more detailed analysis shows that a proper open coloring with remainder $k\mod n$  of $G(m,j)$ also exists if either $9\mid m$ or $9\nmid m$ and $9\nmid n$. However, general results remain unknown.
\end{remark}

\section{Basic Results for $\chi_{(n)}(G)$}

As we will see in Section \ref{sec:8}, $\chi_{(n)}(G)$ may not exist. However, there are bounds when it does. See Section \ref{sec:3} for corresponding results on~$\chi_{n}(G)$.

\begin{theorem}\label{thm: open nonzero mod n inequalities}
    Let $m,n\in\Z^+$ with $m\mid n$. If $\chi_{(m)}(G)$ exists, then
    \[ \chi(G) \leq \chi_{(n)}(G) \leq \chi_{(m)}(G). \]
\end{theorem}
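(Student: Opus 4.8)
The plan is to mirror the argument for the closed-neighborhood analogue, Theorem~\ref{thm: nonzero mod n inequalities}, which is essentially a one-line observation once one recognizes that the nonzero-remainder condition is monotone under passing to divisors of the modulus. The lower bound $\chi(G)\le\chi_{(n)}(G)$ is immediate from the definitions: any proper open coloring with nonzero remainders $\mod n$ is in particular a proper labeling of $G$, so its order is at least $\chi(G)$; this holds whenever the left-hand invariant is finite, which it is because $\chi_{(m)}(G)$ exists and the upper bound (to be proved next) forces $\chi_{(n)}(G)$ to exist as well.

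For the upper bound $\chi_{(n)}(G)\le\chi_{(m)}(G)$, I would take a minimal-order proper open coloring $\ell$ with nonzero remainders $\mod m$ of $G$ — this exists precisely because $\chi_{(m)}(G)$ exists — and argue that the very same labeling $\ell$ is a proper open coloring with nonzero remainders $\mod n$. Properness is a property of $\ell$ alone and is unaffected. For the neighborhood-sum condition, fix $v\in V$ and let $s=\sum_{w\in N(v)}\ell(w)$. By hypothesis $s\not\equiv 0\mod m$, i.e.\ $m\nmid s$. Since $m\mid n$, any multiple of $n$ is a multiple of $m$; contrapositively, $m\nmid s$ implies $n\nmid s$, so $s\not\equiv 0\mod n$. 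Hence $\ell$ witnesses $\chi_{(n)}(G)\le|\ell|=\chi_{(m)}(G)$, and in particular $\chi_{(n)}(G)$ exists and is finite.

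There is essentially no obstacle here — the only thing to be careful about is the bookkeeping of existence: one must note that the chain of inequalities implicitly asserts $\chi_{(n)}(G)$ exists, and that this follows from exhibiting the explicit finite-order coloring $\ell$ rather than being assumed. This is the exact parallel of the remark in the proof of Theorem~\ref{thm: nonzero mod n inequalities} that ``if $\ell$ is a closed coloring with nonzero remainders $\mod m$, then it is also one for $\mod n$.'' So I expect the proof to be two sentences: the first disposing of the lower bound by the definition, the second observing that a nonzero residue $\mod m$ is a fortiori nonzero $\mod n$ when $m\mid n$, which gives both the existence of $\chi_{(n)}(G)$ and the upper bound simultaneously.
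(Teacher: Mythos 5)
Your proposal is correct and is essentially identical to the paper's proof: the lower bound is immediate from the definition, and the upper bound follows from the observation that a sum with nonzero remainder $\mod m$ a fortiori has nonzero remainder $\mod n$ when $m \mid n$, so the same labeling witnesses both existence and the inequality. Your extra care about the existence bookkeeping is sound but the paper leaves it implicit.
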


\begin{proof}
    The first inequality follows immediately from the definition. For the second, observe that if $\ell$ is an open coloring with nonzero remainders $\mod m$, then it is also one for $\mod n$.
\end{proof}

\begin{theorem}\label{thm: open nonzero remainder mod n existence criteria and upper bound by n chi}
    Let $n\in\Z^+$, and let $\chi(G)$ be finite. Then a proper open coloring with nonzero remainders $\mod n$ of $G$ exists if and only if an open coloring with nonzero remainders $\mod n$ of $G$ exists. In that case,
    \[ \chi(G)\le \chi_{(n)}(G) \leq n \, \chi(G). \]
    More precisely, if $\ell$ is an open labeling with nonzero remainders $\mod n$, then
    \[  \chi(G)\le \chi_{(n)}(G) \leq |\ell| \, \chi(G).\]
\end{theorem}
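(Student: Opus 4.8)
The plan is to imitate the proof of Theorem~\ref{thm: nonzero remainder mod n existence criteria and upper bound by n chi} almost verbatim; the only point that needs checking is that open neighborhood sums, like closed ones, are unaffected modulo $n$ when one adds a labeling all of whose values are multiples of $n$. To begin, the lower bound $\chi(G)\le\chi_{(n)}(G)$ and the ``only if'' direction are immediate: any proper open coloring with nonzero remainders $\mod n$ is in particular a proper labeling of $G$, and it is certainly also an open coloring with nonzero remainders $\mod n$.

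For the converse and the upper bound, suppose $\ell$ is an open coloring with nonzero remainders $\mod n$ of $G$. Replacing each label by its residue in $\{0,1,\dots,n-1\}$ changes no neighborhood sum modulo $n$, so we may assume the range of $\ell$ lies in $[0,n-1]$, whence $|\ell|\le n$. Let $\ell'$ be a minimal proper labeling of $G$; multiplying all its labels by $n$ preserves properness, so we may assume its range lies in $n\Z$. I would then verify that $\ell+\ell'$ is the desired proper open coloring with nonzero remainders $\mod n$. For properness: given adjacent $v,w$, if $\ell(v)\ne\ell(w)$ then these residues differ modulo $n$ and hence so do $(\ell+\ell')(v)$ and $(\ell+\ell')(w)$, while if $\ell(v)=\ell(w)$ then $\ell'(v)\ne\ell'(w)$ forces $(\ell+\ell')(v)\ne(\ell+\ell')(w)$. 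For the neighborhood condition, $\sum_{w\in N(v)}(\ell+\ell')(w)\equiv\sum_{w\in N(v)}\ell(w)\not\equiv 0\mod n$, since $\sum_{w\in N(v)}\ell'(w)\in n\Z$.

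It remains to bound the order. Since $\ell(v)\in[0,n-1]$ and $\ell'(v)\in n\Z$, the value $(\ell+\ell')(v)$ determines both $\ell(v)$, as its residue modulo $n$, and $\ell'(v)$; hence $|\ell+\ell'|\le|\ell|\cdot\chi(G)\le n\,\chi(G)$, which yields both displayed inequalities as well as the ``more precisely'' refinement. I do not expect any genuine obstacle here: the only mildly delicate point is keeping the two cases in the properness check apart, since $\ell$ itself need not be proper, exactly as in the closed-neighborhood version.
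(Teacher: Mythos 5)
Your proposal is correct and follows essentially the same argument as the paper: reduce $\ell$ to range $[0,n-1]$, scale a minimal proper labeling $\ell'$ into $n\Z$, and check that $\ell+\ell'$ is a proper open coloring with nonzero remainders $\mod n$ of order at most $|\ell|\,\chi(G)\le n\,\chi(G)$. Your extra detail on the properness check and on why the sum's order is bounded by the product is exactly what the paper leaves implicit.
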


\begin{proof}
    Let $\ell$ be an open coloring with nonzero remainders $\mod n$ of $G$ and let $\ell'$ be a minimal proper labeling of $G$. We may assume that the range of $\ell$ sits in $[0,n-1]$, and we may assume that the range of $\ell'$ sits in $n\Z$. Then the labeling $\ell + \ell'$ is a proper open coloring with nonzero remainders $\mod n$ of $G$. As its order is bounded by $|\ell| \chi(G)$ and since $|\ell|\leq n$, we are done.
\end{proof}

\begin{theorem}\label{thm: open regular chi n}
    Let $n,j\in\Z^+$, and let $G$ be a $j$-regular graph. Then
    \[ n \nmid j \implies \chi_{(n)}(G) = \chi(G). \]
\end{theorem}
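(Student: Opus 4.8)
The plan is to mimic the proof of Theorem~\ref{thm: regular chi n} exactly, replacing $j+1$ with $j$ throughout. The key observation is that for a $j$-regular graph, the constant labeling by $1$ assigns to every open neighborhood the sum $j$, so this is an open coloring with nonzero remainders $\mod n$ precisely when $n \nmid j$.

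First I would note that since $G$ is $j$-regular, every vertex $v$ satisfies $|N(v)| = j$, so the constant labeling $\ell \equiv 1$ gives $\sum_{w \in N(v)} \ell(w) = j$ for every $v \in V$. Under the hypothesis $n \nmid j$, this sum is nonzero $\mod n$, so $\ell$ is an open coloring with nonzero remainders $\mod n$ of $G$ (it need not be proper, but that is fine). Next I would invoke the standard bound $\chi(G) \le j+1$ for any $j$-regular graph (Brooks' theorem, or just greedy coloring, gives $\chi(G) \le \Delta(G)+1 = j+1$), which in particular shows $\chi(G)$ is finite, so Theorem~\ref{thm: open nonzero remainder mod n existence criteria and upper bound by n chi} applies. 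That theorem then yields both the existence of a proper open coloring with nonzero remainders $\mod n$ and the inequalities $\chi(G) \le \chi_{(n)}(G) \le |\ell| \, \chi(G) = 1 \cdot \chi(G) = \chi(G)$, forcing equality $\chi_{(n)}(G) = \chi(G)$.

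There is essentially no obstacle here: the entire argument is a one-line application of the constant labeling combined with the already-established upper bound theorem. The only thing to be mildly careful about is confirming that Theorem~\ref{thm: open nonzero remainder mod n existence criteria and upper bound by n chi} is stated for a general (not necessarily finite) graph $G$ with finite $\chi(G)$ — which it is — so that infinite $j$-regular graphs are covered as well, matching the scope of Theorem~\ref{thm: regular chi n}. Thus the proof reduces to: exhibit the constant-$1$ labeling, observe $\chi(G) \le j+1 < \infty$, and cite Theorem~\ref{thm: open nonzero remainder mod n existence criteria and upper bound by n chi}.
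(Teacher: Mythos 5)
Your proposal is correct and follows exactly the paper's argument: the constant labeling by $1$ yields open neighborhood sums equal to $j \not\equiv 0 \pmod n$, and Theorem~\ref{thm: open nonzero remainder mod n existence criteria and upper bound by n chi} with $|\ell|=1$ forces $\chi(G) \le \chi_{(n)}(G) \le \chi(G)$. Your added remarks on $\chi(G)\le j+1$ and the applicability to infinite graphs are consistent with (and slightly more explicit than) the paper's own two-line proof.
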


\begin{proof}
    If $n \nmid j$, then a constant labeling of $G$ by $1$ is an open coloring with nonzero remainders $\mod n$. Theorem \ref{thm: open nonzero remainder mod n existence criteria and upper bound by n chi} finishes the proof.
\end{proof}

\section{Examples for $\chi_{(n)}(G)$} \label{sec:8}

In the following, we evaluate $\chi_{(n)}(G)$ for a few special graph families. See Section \ref{sec:4} for the corresponding results on~$\chi_{n}(G)$.

\begin{theorem}
    Let $n,m\in\Z^+$ with $n,m\geq 2$. For the complete graph,
    $$\chi_{(n)}(K_m) =
    \begin{cases}
        m, &\text{if } n > 2 \text{ or } n=2 \text{ with } 2\mid m, \\
        \text{does not exist}, &\text{otherwise}.
    \end{cases}
    $$
\end{theorem}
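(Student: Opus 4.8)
The plan is to handle $K_m$ by analyzing what an open coloring with nonzero remainders $\mod n$ must look like, then splitting into the two cases of the statement. As in the proof of Theorem~\ref{Complete Graphs - Open Neighborhoods}, the key structural observation comes from Lemma~\ref{lem: venn}: for any two vertices $v,w$ of $K_m$, the symmetric difference of their open neighborhoods is exactly $\{v,w\}$ (with the roles swapped), so any open coloring $\ell$ of $K_m$ satisfies $\ell(v)\equiv\ell(w)\mod n$ for all $v,w$. Hence in $\Z_n$ every vertex carries the same residue $\alpha$, and the open neighborhood sum at each vertex is $(m-1)\alpha \mod n$. Thus an open coloring with nonzero remainders $\mod n$ of $K_m$ exists if and only if there is $\alpha\in\Z_n$ with $(m-1)\alpha\not\equiv 0\mod n$; equivalently, $(m-1)\alpha$ is not forced to be $0$ for all $\alpha$, i.e.\ $n\nmid (m-1)\alpha$ for some $\alpha$. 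This fails precisely when $(m-1)\alpha\equiv 0\mod n$ for every $\alpha\in\Z_n$, which happens if and only if $n\mid (m-1)$.

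Next I would translate $n\mid(m-1)$ into the case split of the theorem. Since $2\leq n$ and $2\leq m$, we have $n\mid(m-1)$ possible only in borderline situations; in particular, when $n>2$, note that $n\mid(m-1)$ would still allow nonexistence, so I need to be careful — actually the correct reading is: nonexistence occurs exactly when $n\mid (m-1)$ AND additionally we cannot even get a proper coloring, but properness is automatic for $K_m$ with $m$ distinct integer labels, so nonexistence is equivalent to $n\mid(m-1)$. I would then observe that the theorem's stated nonexistence case, "$n=2$ and $2\nmid m$", is exactly "$n=2$ and $2\mid(m-1)$", i.e.\ exactly the instance of $n\mid(m-1)$ compatible with $m\geq 2$; and for $n>2$ with $m\geq 2$, $n\mid(m-1)$ would require $m\geq n+1\geq 4$, so this case does not contradict the stated formula only if the authors intend the ranges to make $n\mid(m-1)$ impossible in their regime — I would double-check this against the hypotheses and, if $n\mid(m-1)$ can occur for $n>2$, note that the coloring exists anyway because one can choose $\alpha$ with $(m-1)\alpha\not\equiv0$ unless $n\mid(m-1)$; so in fact the clean statement is that nonexistence is equivalent to $n\mid(m-1)$, which for $m\geq 2$ forces either ($n=2,\ 2\mid m-1$) matching the theorem, and I would reconcile the remaining overlap.

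For the existence case, I would exhibit an explicit coloring realizing order $m$. Take $\alpha\in\Z$ with $(m-1)\alpha\not\equiv 0\mod n$ (which exists exactly when $n\nmid(m-1)$, and one can always take $\alpha=1$ here), and label the vertices $\alpha,\ \alpha+n,\ \alpha+2n,\ \dots,\ \alpha+(m-1)n$. These are $m$ distinct integers all congruent to $\alpha\mod n$, so the labeling is proper and each open neighborhood sum is $(m-1)\alpha\not\equiv 0\mod n$. This gives $\chi_{(n)}(K_m)\leq m$, and combined with the universal bound $\chi(K_m)=m\leq\chi_{(n)}(K_m)$ from Theorem~\ref{thm: open nonzero remainder mod n existence criteria and upper bound by n chi}, we get equality. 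For the nonexistence case $n=2$, $2\nmid m$: here $2\mid(m-1)$, so $(m-1)\alpha\equiv 0\mod 2$ for every $\alpha$, and by the structural argument every open neighborhood sum is $0\mod 2$, so no valid coloring exists.

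The main obstacle I anticipate is not the mathematics, which is essentially the same as Theorem~\ref{Complete Graphs - Open Neighborhoods} with "$\equiv k$" replaced by "$\not\equiv 0$", but rather making sure the case split "$n>2$ or ($n=2$ and $2\mid m$)" versus "$n=2$ and $2\nmid m$" is genuinely equivalent to "$n\nmid(m-1)$" versus "$n\mid(m-1)$" under the hypotheses $n,m\geq 2$; this requires checking that $n\mid(m-1)$ with $n>2$ is excluded — and indeed it is not excluded in general (e.g.\ $n=3$, $m=4$), so I would need to verify that even when $n\mid(m-1)$ with $n>2$ one can still produce a valid coloring by a different mechanism (for instance by not taking a constant residue — but the structural lemma forbids that!). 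This tension means I should re-examine whether the theorem's formula is actually asserting existence for all $n>2$; if the intended claim is subtler, I would restrict attention to verifying the two displayed cases directly and flag that the only nonexistence regime is $n\mid(m-1)$.
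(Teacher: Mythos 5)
Your argument has a genuine gap, and it is exactly the one you flagged at the end without resolving. The structural step ``every open coloring of $K_m$ has all labels congruent $\bmod\ n$'' is obtained by applying Lemma~\ref{lem: venn}, but that lemma is stated (and proved) only for open colorings \emph{with remainder $k\bmod n$}, i.e., where every open neighborhood sum is congruent to the \emph{same} residue $k$. An open coloring with \emph{nonzero remainders} $\bmod\ n$ only requires each sum to avoid $0$; for $n>2$ different vertices may realize different nonzero residues, the cancellation argument behind Lemma~\ref{lem: venn} does not go through, and the labels need not be constant $\bmod\ n$. Consequently your conclusion ``nonexistence $\iff n\mid(m-1)$'' is false for $n>2$: e.g.\ for $n=3$, $m=4$ the labeling $1,\,4,\,3,\,6$ has total sum $14\equiv 2\bmod 3$, so the open neighborhood sums are $13,10,11,8\equiv 1,1,2,2\bmod 3$, all nonzero, and $\chi_{(3)}(K_4)=4$ even though $3\mid(m-1)$. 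This is precisely the ``different mechanism'' you suspected could not exist: a non-constant residue pattern, which the (inapplicable) lemma does not forbid.

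The paper's proof splits on $n$. For $n>2$ it gives the explicit labeling $1,\ n+1,\ n,\ 2n,\dots,(m-2)n$, whose total sum is $\equiv 2\bmod n$, so each open neighborhood sum is $\equiv 1$ or $\equiv 2\bmod n$, both nonzero since $n>2$; properness and order $m$ are clear, and the lower bound $\chi(K_m)=m$ finishes that case. For $n=2$, ``nonzero $\bmod\ 2$'' coincides with ``$\equiv 1\bmod 2$,'' so $\chi_{(2)}(K_m)=\chi_{(2,1)}(K_m)$ and Theorem~\ref{Complete Graphs - Open Neighborhoods} gives existence iff $(m-1,2)\mid 1$, i.e., iff $2\mid m$. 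Your argument is essentially correct for the $n=2$ case (where the lemma genuinely applies), but the $n>2$ case needs a construction that uses at least two residue classes, which your framework rules out from the start.
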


\begin{proof}
    For $n>2$, labeling the vertices of $K_m$ with $1$, $n+1$, and $n, 2n, \dots, (m-2)n$ gives an open coloring with nonzero remainders $\mod n$. For $n=2$, the open colorings with nonzero remainders $\mod 2$ are identical to the open colorings with remainder $1\mod 2$. Thus, we have $\chi_{(2)}(K_m) = \chi_{(2,1)}(K_m)$, and Theorem \ref{Complete Graphs - Open Neighborhoods} applies.
    %
    %
    %
\end{proof}

\begin{theorem}
Let $n,i,j \in \Z^+$ with $n\geq 2$. For the complete bipartite graph,
\[
\chi_{(n)}(K_{i,j}) =
\begin{cases}
2, &\text{if } n \nmid i \text{ and } n \nmid j, \\
3, &\text{if } n \nmid i \text{ or } n \nmid j, \text{ but not both}, \\
4, &\text{otherwise}.
\end{cases}
\]
\end{theorem}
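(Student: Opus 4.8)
The plan is to reduce the open-neighborhood conditions on $K_{i,j}$ to two scalar congruences and then run a short case analysis. Write $V_1,V_2$ for the two parts, with $|V_1|=i$ and $|V_2|=j$. Since every vertex of $V_1$ has open neighborhood exactly $V_2$, and vice versa, a labeling $\ell$ is an open coloring with nonzero remainders $\mod n$ precisely when $S_1:=\sum_{v\in V_1}\ell(v)\not\equiv 0\mod n$ and $S_2:=\sum_{v\in V_2}\ell(v)\not\equiv 0\mod n$; properness says only that the label sets $\ell(V_1)$ and $\ell(V_2)$ are disjoint. The structural fact I would isolate first is this: if $V_1$ is monochromatic with label $a$, then $S_1=ai\equiv 0\mod n$ whenever $n\mid i$, so $n\mid i$ forces $|\ell(V_1)|\geq 2$ in any such coloring, and symmetrically $n\mid j$ forces $|\ell(V_2)|\geq 2$.

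For the lower bounds I would combine this observation with $\chi(K_{i,j})=2$ and with the disjointness of $\ell(V_1)$ and $\ell(V_2)$. Existence — which follows from the constructions below — together with Theorem \ref{thm: open nonzero remainder mod n existence criteria and upper bound by n chi} already gives $\chi_{(n)}(K_{i,j})\geq 2$. If $n\mid i$, the range contains at least two labels coming from $V_1$ and at least one further label from $V_2$, so $\chi_{(n)}(K_{i,j})\geq 3$; if $n\mid i$ and $n\mid j$, it contains at least two disjoint labels from each side, so $\chi_{(n)}(K_{i,j})\geq 4$. The same bounds hold with $i$ and $j$ interchanged.

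For the matching upper bounds I would write down explicit labelings. If $n\nmid i$ and $n\nmid j$: label all of $V_1$ with $1$ and all of $V_2$ with $n+1$, so that $S_1=i\not\equiv 0$, $S_2=(n+1)j\equiv j\not\equiv 0\mod n$, the two labels differ, and the order equals $2$. If exactly one of $n\mid i$, $n\mid j$ holds, say $n\mid i$ and $n\nmid j$ (the other case is symmetric): label one vertex of $V_1$ with $1$ and the remaining $i-1\geq 1$ vertices with $0$, and label all of $V_2$ with $n+1$, so that $S_1=1\not\equiv 0$ (using $n\geq 2$), $S_2\equiv j\not\equiv 0\mod n$, the sets $\{0,1\}$ and $\{n+1\}$ are disjoint, and the order equals $3$. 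If $n\mid i$ and $n\mid j$: put one $1$ and $i-1$ zeros on $V_1$, and one $3$ and $j-1$ twos on $V_2$, so that $S_1=1$, $S_2=2j+1\equiv 1\mod n$, the sets $\{0,1\}$ and $\{2,3\}$ are disjoint, all four labels occur because $i,j\geq n\geq 2$, and the order equals $4$. Each construction meets the corresponding lower bound, and the constructions also show that $\chi_{(n)}(K_{i,j})$ always exists.

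I do not expect a real obstacle here. The only points needing care are checking that each advertised label set really has the stated cardinality — e.g.\ that $0$ genuinely appears on $V_1$ in the second and third cases, which uses $i\geq n\geq 2$, and that $n+1\notin\{0,1\}$ — and handling the "monochromatic forces two colors" observation carefully enough that the lower bounds in the $3$- and $4$-valued cases are airtight. Exploiting the $V_1\leftrightarrow V_2$ symmetry keeps the analysis down to the three essentially different configurations.
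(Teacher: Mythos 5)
Your proposal is correct and follows essentially the same route as the paper: the same characterization of order-$2$ colorings via monochromatic parts, the same observation that $n\mid i$ forces at least two labels on $V_1$ (and disjointness then yields the lower bounds $3$ and $4$), and the same order-$3$ construction. The only cosmetic difference is that you exhibit an explicit order-$4$ labeling in the last case, whereas the paper obtains the upper bound $4$ from its general theorem applied to the labeling with a single $1$ in each part.
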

\begin{proof}
    Let $V_1$ and $V_2$ with $|V_1|=i$ and $|V_2|=j$ denote the vertex sets belonging to the two parts of $K_{i,j}$. By labeling a single vertex from $V_1$ and a single vertex from $V_2$ with $1$ and the rest with $0$, Theorem \ref{thm: open nonzero remainder mod n existence criteria and upper bound by n chi} shows that $2 \leq \chi_{(n)}(K_{i,j}) \leq 4$.

    We have $\chi_{(n)}(K_{i,j})=2$ if and only if all vertices of $V_1$ can be labeled with the same label $\alpha$ and all vertices of $V_2$ with the same label $\beta$ for distinct $\alpha,\beta \in \Z$ with $i\alpha \not\equiv 0 \mod n$ and $j\beta \not \equiv 0 \mod n$. This is possible if and only if $n\nmid i$ and $n\nmid j$.

    If $n\mid i$ and $n \nmid j$, then a proper open $3$-coloring with nonzero remainders $\mod n$ of $K_{i,j}$ is obtainable by labeling one vertex of $V_1$ with $1$ and the rest with $0$ and all the vertices of $V_2$ with $n+1$. The case where $n\nmid i$ and $n \mid j$ is done similarly.

    If $n\mid i$ and $n\mid j$, then labeling $V_1$ needs at least two colors as does labeling $V_2$. But as these colors must be mutually distinct to get a proper coloring, we are done.
\end{proof}

\begin{theorem}
Let $n,m \in \Z^+$ with $n \geq 2$ and $m \geq 1$. For the complete $m$-ary tree of infinite height,
$$
\chi_{(n)}(T_m) =
\begin{cases}
2, &\text{if } n \nmid m \text{ and } n \nmid (m+1), \\
4, &\text{if } n = 2 \text{ and $m$\! odd}, \\
3, &\text{otherwise}.
\end{cases}
$$
\end{theorem}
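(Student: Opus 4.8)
The plan is to split into the three divisibility regimes and, as a preliminary, to pin down exactly when a proper open $2$-coloring with nonzero remainders exists. Since $m\ge 1$, the tree $T_m$ has an edge and is bipartite, so $\chi(T_m)=2$ and $\chi_{(n)}(T_m)\ge 2$; moreover $T_m$ is connected, so every proper $2$-coloring is constant on each class of the (height-parity) bipartition, say label $a$ on even heights and a distinct label $b$ on odd heights (up to swapping $a,b$). The open neighbourhood sums that occur are $mb$ at the root, $(m+1)a$ at vertices of odd height, and $(m+1)b$ at vertices of even height $\ge 2$. Hence a proper open $2$-coloring with nonzero remainders exists iff there are distinct integers $a,b$ with $mb\not\equiv 0$, $(m+1)a\not\equiv 0$, and $(m+1)b\not\equiv 0 \bmod n$; the choice $b=1$, $a=1+n$ works exactly when $n\nmid m$ and $n\nmid(m+1)$, while if $n\mid m$ then $mb\equiv 0$ for all $b$ and if $n\mid(m+1)$ then $(m+1)b\equiv 0$ for all $b$, so no such coloring exists. (Note $n\mid m$ and $n\mid(m+1)$ cannot both hold, as $n\ge 2$.) This settles the first case $\chi_{(n)}(T_m)=2$ and also yields the lower bound $\chi_{(n)}(T_m)\ge 3$ in every remaining case.

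For the cases with value $3$, I would exhibit proper open $3$-colorings. If $n\mid m$, then $\gcd(m+1,n)=1$, which divides $1$, so Theorem \ref{infinite trees - open neighborhoods} already provides a proper open coloring of $T_m$ of order $3$ with constant remainder $1\bmod n$; this has all open sums $\not\equiv 0$, so $\chi_{(n)}(T_m)=3$. If instead $n\mid(m+1)$ with $n\ge 3$, I would use the constant-row labeling that assigns to row $i$ the integer $i\bmod 3$, i.e.\ the height-periodic pattern $0,1,2,0,1,2,\dots$. Since $m\equiv-1\bmod n$, the open sums reduce mod $n$ to $m\cdot 1\equiv-1$, $2m\equiv-2$, and $1$, all nonzero because $n\ge 3$; consecutive rows carry distinct labels, so the coloring is proper, and again $\chi_{(n)}(T_m)=3$. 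Together with the first paragraph this resolves everything except $n=2$ with $m$ odd.

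Finally, for $n=2$ with $m$ odd I claim $\chi_{(2)}(T_m)=4$. For the upper bound, the (not necessarily proper) $\{0,1\}$-labeling given by the height-periodic pattern $0,1,1,0,0,1,1,0,\dots$ of period $4$ has every open neighbourhood sum odd — this is where oddness of $m$ enters — so it is an open coloring with nonzero remainders $\bmod 2$ of order $2$, and Theorem \ref{thm: open nonzero remainder mod n existence criteria and upper bound by n chi} gives $\chi_{(2)}(T_m)\le 2\cdot\chi(T_m)=4$. The lower bound $\ge 3$ is already in hand; the remaining and main obstacle is to rule out a proper $3$-coloring with all open sums odd. I would argue on the parities of the three colors. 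If all three are even, every open sum is even; if all three are odd, every non-root vertex has $m+1$ (an even number of) odd neighbours, hence an even sum. If exactly one color is odd, any vertex carrying it has all its children even-colored, so to make its open sum odd its parent must carry that same odd color — impossible by properness unless the vertex is the root, and a root with all-even children also has an even sum; so that color is unused and we are back to the all-even case. If exactly one color is even, then (using $m$ odd) any vertex carrying it has children contributing an odd parity, which forces its parent to carry the even color and hence to equal the vertex, again impossible unless it is the root; so the even color appears at most at the root, every vertex of height $\ge 1$ is odd-colored, and any vertex of height $\ge 2$ then has its parent and all $m$ children odd, giving an even sum. Each branch contradicts "all open sums odd," so $\chi_{(2)}(T_m)=4$. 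The delicate points in this last step are keeping straight the special status of the root (the unique vertex without a parent) in each parity branch and invoking the oddness of $m$ at precisely the right spots.
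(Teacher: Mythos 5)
Your proof is correct, and while the case decomposition necessarily mirrors the paper's, two of its key steps take genuinely different routes. For the case $n\mid m$ you observe $(m+1,n)=1$ and simply quote Theorem \ref{infinite trees - open neighborhoods} with $k=1$ to produce a proper open $3$-coloring all of whose open sums are $\equiv 1 \mod n$; the paper instead builds such a coloring from scratch (all even heights labeled $1$, with an inductive ``one descendant gets $n+1$'' scheme on the odd heights). Your shortcut is legitimate and arguably cleaner, at the cost of importing the earlier construction. The more substantial divergence is the lower bound $\chi_{(2)}(T_m)\geq 4$ for $m$ odd: the paper locates non-root vertices carrying each of the three labels $\alpha_1,\alpha_2,\alpha_3$, uses $2\mid (m+1)$ to force $\alpha_{i+1}+\alpha_{i+2}\equiv 1\mod 2$ for every $i$, and sums the three congruences to reach $0\equiv 1$; you instead run an exhaustive case analysis on how many of the three colors are odd, showing that each parity distribution forces an even open sum somewhere. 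The two arguments reach the same underlying contradiction---three colors cannot pairwise have opposite parities---but yours is more elementary and makes the special role of the root explicit, while the paper's is shorter once one grants that all three colors occur off the root. Your period-$4$ pattern $0,1,1,0,\ldots$ for the order-$2$ open coloring (fed into Theorem \ref{thm: open nonzero remainder mod n existence criteria and upper bound by n chi} to get the upper bound $4$) also differs from the paper's inductive scheme but is verified correctly using the oddness of $m$. The remaining steps (the characterization of order-$2$ colorings via the height bipartition, and the $0,1,2$ row pattern for $n\mid(m+1)$ with $n\geq 3$) coincide with the paper's up to cosmetic changes.
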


\begin{proof}
    Recall that we write $v_0$ for the root of $T_m$ and $h(v)$ for the height of the vertex $v$ of $T_m$. An open coloring with nonzero remainders $\mod n$ of $T_m$ can be obtained by the following algorithm. Label $v_0$ with $0$, one of its children with $1$, and the rest of the children with $0$. Now induct on the height by labeling all grandchildren of any vertex labeled by $1$ with $0$. For grandchildren of a vertex labeled by $0$, label one with $1$ and the remainder with $0$. As a result, Theorem \ref{thm: open nonzero remainder mod n existence criteria and upper bound by n chi} shows that $2\leq \chi_{(n)}(T_m) \leq 4$.

    Now $\chi_{(n)}(T_m)=2$ if and only if $T_m$ can be labeled according to the parity of $h(v)$ with $\alpha, \beta \in \Z$, respectively, such that
    $m\beta\not\equiv 0 \mod n$, $(m+1)\alpha\not\equiv 0 \mod n$, and $(m+1)\beta\not \equiv 0 \mod n$. This is possible if and only if $n \nmid m$ and $n \nmid (m+1)$.

    If $n \mid m$, a proper open $3$-coloring with nonzero remainders $\mod n$ of $T_m$ is given by the following algorithm. Label all vertices of $T_m$ of even height with $1$, label one child of $v_0$ with $n+1$, and the rest of the children with $0$. We continue to inductively label the remaining vertices  of odd height of $T_m$ by labeling all grandchildren of any vertex labeled by $n+1$ with $0$. For grandchildren of a vertex labeled by $0$, label one with $n+1$ and the remainder with $0$.

    This leaves the case of $n \mid (m+1)$. If $n\geq 3$, a proper open $3$-coloring with nonzero remainders $\mod n$ of $T_m$ is achieved by a constant row labeling according to the repeated pattern $1,-1,0$ starting from the root $v_0$.

    However, if $n \mid (m+1)$ and $n=2$, we claim that there is no such proper open $3$-coloring. If there were, suppose that $\alpha_i\in \Z$ for $i \in \{1,2,3\}$ are the three distinct labels used, where indices will be treated as members of $\Z_3$ for convenience. As $m+1\equiv 0 \mod n$, any non-root vertex of $T_m$ must have at least one child with a different label than their parent. As a result, we can find non-root vertices $v_i$ labeled with $\alpha_i$ for each $i\in \{1,2,3\}$. The open neighborhood sum condition for $v_i$ then requires $c_i \alpha_{i+1} + d_i\alpha_{i+2}\equiv 1 \mod 2$ for integers $c_i,d_i\in \Z^+$ with $c_i+d_i=m+1$. This implies $c_i+d_i \equiv m+1\equiv 0 \mod 2$. Hence $c_i\equiv d_i \mod 2$, and $c_i \alpha_{i+1} + d_i\alpha_{i+2} \equiv c_i(\alpha_{i+1} + \alpha_{i+2})\equiv 1 \mod 2$ follows. In particular, $\alpha_{i+1} + \alpha_{i+2}\equiv 1 \mod 2$. Summing over $i \in \{1,2,3\}$, we get
    $2\sum_{i=1}^3 \alpha_i \equiv 1 \mod 2$, a contradiction.
\end{proof}

\section{Concluding Remarks}


It is worth pointing out that this paper only scratches the surface for the three introduced graph invariants, $\chi_n(G)$, $\chi_{(n,k)}(G)$, and $\chi_{(n)}(G)$. There is much room for further exploration and more precise results. Some avenues of investigation of particular interest include:

\begin{itemize}
    \item Finding conditions under which $\chi_n(G)$, $\chi_{(n,k)}(G)$, and $\chi_{(n)}(G)$ are equal to (or close to) $n\chi(G)$,
    \item Finding exact values for $\chi_n(G(m,j))$ and $\chi_{(n,k)}(G(m,j))$ for the values of $k,n,m,j$ not covered in our Theorems \ref{thm: chi n gen petersen} and \ref{thm: gen pet n k}, and
    \item Finding exact values for $\chi_n(G)$, $\chi_{(n,k)}(G)$, and $\chi_{(n)}(G)$ for more families of graphs.

\end{itemize}

\bibliographystyle{abbrvnat}
\bibliography{refs}

\end{document}